\def\section{\@startsection{section}{1}%
  \z@{1.1\linespacing\@plus\linespacing}{.8\linespacing}%
  {\normalfont\Large\scshape\centering}}
\theoremstyle{plain}
\newtheorem*{thmA}{Theorem A}
\newtheorem*{thmB}{Theorem B}
\newtheorem*{thmC}{Theorem C}
\newtheorem*{thm1.2}{(1.2) Theorem}
\newtheorem*{thm1.3}{(1.3) Theorem}
\newtheorem*{thm1.4}{(1.4) Theorem}
\newtheorem*{prop*}{Proposition}
\newtheorem{prop}{Proposition}[section]
\newtheorem{thm}[prop]{Theorem}
\newtheorem{cor}[prop]{Corollary}
\newtheorem{lemma}[prop]{Lemma}
\theoremstyle{definition}
\newtheorem{Def}[prop]{Definition}
\newtheorem*{Def*}{Definition}
\newtheorem{example}[prop]{Example}
\newtheorem{notation}[prop]{Notation}
\newtheorem*{notation*}{Notation}
\newtheorem{remark}[prop]{Remark}
\newtheorem{remarks}[prop]{Remarks}
\newtheorem*{lemma*}{Lemma}
\newtheorem*{remarks*}{Remarks}
\newtheorem{Defi}{Definition}
\newtheorem{question}[Defi]{Question}
\DeclareMathOperator{\diam}{diam}
\DeclareMathOperator{\Res}{Res}
\DeclareMathOperator{\St}{St}
\newcommand{\cald}{\mathcal{D}}
\newcommand{\calk}{\mathcal{K}}
\newcommand{\calp}{\mathcal{P}}
\newcommand{\ff}{\mathbb{F}}
\newcommand{\pp}{\mathbb{P}}
\newcommand{\qq}{\mathbb{Q}}
\newcommand{\zz}{\mathbb{Z}}
\newcommand{\frakm}{\mathfrak{m}}
\newcommand{\frakP}{\mathfrak{P}}
\newcommand{\ga}{\alpha}
\newcommand{\gb}{\beta}
\newcommand{\gc}{\gamma}
\newcommand{\gC}{\Gamma}
\newcommand{\gd}{\delta}
\newcommand{\gD}{\Delta}
\newcommand{\gre}{\epsilon}
\newcommand{\gvp}{\varphi}
\newcommand{\gs}{\sigma}
\newcommand{\gS}{\Sigma}
\newcommand{\gTH}{\Theta}
\newcommand{\nsg}{\trianglelefteq}
\newcommand{\Dt}{D^{\times}}
\newcommand{\ra}{\rightarrow}
\newcommand{\sminus}{\smallsetminus}
\newcommand{\lan}{\langle}
\newcommand{\ran}{\rangle}
\newcommand{\Aut}{{\rm Aut}}
\newcommand{\In}{{\rm In}_M}
\newcommand{\Inc}{{\rm Inc}_M}
\newcommand{\tensor}{\otimes}
\newcommand{\half}{\textstyle{\frac{1}{2}}}
\newcommand{\restr}{\upharpoonright}
\newcommand{\widebar}[1]{\overset{\mskip1mu\hrulefill\mskip1mu}{#1}
                \vphantom{#1}}
\numberwithin{equation}{section}
\newcommand{\ETH}{$(3\frac{1}{2})$}
\newcommand{\N}{\dot{N}}
\newcommand{\vv}{\vert\vert}
\renewcommand{\restr}{\mathord{\upharpoonright}}
\begin{document}

\title{On graphs and valuations}
\author[I.~Efrat, A.~S.~Rapinchuk, Y.~Segev]{Ido Efrat$^1$\qquad Andrei S.~Rapinchuk$^2$\qquad Yoav Segev}

\address{Ido Efrat \\
         Department of Mathematics \\
         Ben-Gurion University of the Negev\\
         Beer-Sheva 84105 \\
         Israel}
\email{efrat@math.bgu.ac.il}
\thanks{$^1$This research was supported by the Israel Science Foundation (grant No.\ 152/13)}

\address{Andrei S.~Rapinchuk\\
         Department of Mathematics\\
         University of Virginia\\
         Charlottesville, Virginia 22904\\
         USA}
\email{asr3x@weyl.math.virginia.edu}
\thanks{$^2$Partially supported by NSF grant DMS-1301800 and BSF grant 201049.}

\address{Yoav Segev \\
         Department of Mathematics \\
         Ben-Gurion University of the Negev\\
         Beer-Sheva 84105 \\
         Israel}
\email{yoavs@math.bgu.ac.il}
\keywords{valuation, commuting graph, Milnor $K$-graph}
\subjclass[2010]{Primary: 16W60, 16K20, 19D45; Secondary: 05C25}

\begin{abstract}
In the last two decades new techniques emerged to construct valuations
on an infinite division ring $D,$ given a normal subgroup $N\subseteq \Dt$ of finite
index.  These techniques were based on the commuting graph of $\Dt/N$ in
the case where $D$ is non-commutative,
and on the Milnor $K$-graph on $\Dt/N,$ in the
case where $D$ is commutative.  In this paper we unify these
two approaches and consider V-graphs on $\Dt/N$ and how they lead
to valuations.  We furthermore
generalize previous results to situations of finitely many valuations.
\end{abstract}

\date{\today}

\maketitle

\tableofcontents

\section{Introduction}

Let $D$ be a division ring, and $D^\times$ be its multiplicative group.
We recall that a {\it valuation} on $D$ is a surjective
homomorphism $v \colon D^\times \to \Gamma$ to a {\it totally ordered}
group
$\Gamma$ such that
\[
v(a + b) \ge \min(v(a) , v(b)) \ \ \text{ for all} \ \ a , b \in D^\times, \ b \neq -a.
\]
It is well-known that the presence of a non-trivial valuation (or a suitable family of valuations)
can be the key to understanding a wide range of properties of $D$.
Therefore one is interested in conditions that guarantee the existence of a valuation on $D$ with nice properties.
In the case where $D$ is a commutative field, this was the focus of an extensive study, and several methods
to detect valuations were developed, notably the {\it rigidity} method (see \S\ref{Milnor K-graphs}).

In the non-commutative case a method to construct non-trivial valuations had emerged in
\cite{Seg2}, \cite{RS} and \cite{RSS}.
There a valuation $v$ on $D$ was constructed from the assumption
that $D^\times$ possesses a finite index normal subgroup $N$ with the
quotient $D^{\times}/N$ having certain properties.
In particular the diameter of the {\it commuting graph} of
this quotient should not be too small (see \S\ref{Commuting graphs}).
Then, in addition, the subgroup $N$ turns out to be open in the topology on $D$ defined by $v$.
Results of this nature were shown to be very useful in the analysis of the
normal subgroup structure of $D^\times$, when $D$ is a finite-dimensional division algebra.

Next, in \cite{Efr4} it was shown that the method of \cite{Seg2}, \cite{RS} and \cite{RSS} can be
used also when $D$ is a commutative field, once the above commuting graph (which is trivial in the commutative case) is replaced by a graph related
to the Milnor $K$-ring of $D$ modulo the subgroup $N$ of $D^\times$  (see \S\ref{Milnor K-graphs}).

The purpose of this paper is to unify and generalize these two constructions,
by axiomatizing this new approach to the construction of valuations on division rings, whether commutative or not.
This axiomatization leads us to the notion of a {\it valuation graph
associated with} a finite index normal subgroup $N$ of $\Dt$ (see subsection \ref{sub vg}).
This notion, in turn, leads to a uniform approach for constructing maps $\gvp\colon N\to\gC$ having certain
properties resembling those of a valuation on $D,$ where $\gC$ is a {\it partially ordered group}
(see Theorem B  below).
Once such a map $\gvp$ is obtained machinery from \cite{RS, Efr4} can be used to construct
a valuation
(see Theorem A below).  Further, using $\gvp$ and certain additional hypotheses,
and expanding on machinery from \cite{RSS, R} leads us to
new openness results with respect to a {\it finite set of valuations} (and not a single valuation) for $N$.
This is done in Theorem \ref{thm val2} which is then applied to obtain Theorem C.

We hope that our notion of a valuation graph will facilitate future applications of our methods.

We refer to e.g., \cite{Efr_book}, \cite{Endler}, \cite{EnglerPrestel}, \cite{MMU}, \cite{Riben}, \cite{Schil}, \cite{Wad}
for general facts and notions in valuation theory.  See also \S8.

\clearpage

\subsection{Valuations via valuation graphs}\label{sub vg}\hfill
\medskip

\noindent
In what follows let $D$ be an infinite division ring and let $N$ be a proper finite index normal subgroup of
$D^\times$ containing $-1$.
Given $a\in D^\times$ let $a^*=aN$ be the corresponding coset in  $D^\times/N$.

We consider undirected graphs $\Delta$ whose vertices are the non-identity elements of the quotient $D^\times/N$.
We denote the distance function on the vertices of $\Delta$ by $d(\cdot,\cdot)$.
We set $d(a^*,b^*)=\infty$ if the vertices $a^*,b^*$ are not on the same connected component of $\Delta$.
Also, let $\diam(\Delta)$ be the {\it diameter} of the graph $\Delta$, i.e., $ \diam(\Delta)=\sup d(a^*,b^*)$, with $a^*,b^*$ ranging over all vertices.

\begin{Defi}\label{def V-graph}
We say that $\Delta$ as above is a V-{\it graph} (or a {\it valuation graph}) for $D$ if
for every $a , b, c \in D^\times \sminus N$ the following three conditions hold:

\vskip2mm

(V1) if $a - b \in N$ then $d(a^* , b^*) \leqslant 1$,

\vskip1mm

(V2) if $d(a^* , b^*)\leqslant 1$ then $d((a^{-1})^* , b^*) \leqslant 1$,

\vskip1mm

(V3) if $ab \not\in N$ and both $d(a^*,c^*)\leq 1$ and $d(a^*b^*,c^*)\le 1$, then also $d(b^*,c^*) \le 1$.
\medskip

\noindent
\end{Defi}
We then say that the V-graph\ $\Delta$ is {\it associated} with the normal subgroup
$N$, or that the quotient $D^\times/N$ {\it supports} the V-graph $\Delta$.

\vskip2mm

%
\begin{remarks*}
(1) \quad
Axiom (V1) is equivalent to the representative-free condition

\vskip1mm

(V$1'$) if $1 \in a^* + b^*$ then $d(a^* , b^*) \leqslant 1$.
\vskip1mm

\noindent
Here $a^*+b^*$ is the set of all sums of an element of $a^*$ and an element of $b^*$.
The condition $1\in a^*+b^*$ is precisely the {\it Steinberg relation} in the relative
version of Milnor's $K$-theory (which will be described in \S\ref{Milnor K-graphs}).

\medskip

(2)\quad
In sections 4, 5 and 6 we actually use the following weaker
\vskip1mm

(V$3'$)  if  $d(a^* , (ab)^*) \le 2$ or $d(a^* , (ba)^*) \le 2$, then  $d(a^* , b^*) \le 2$,
\vskip1mm

\noindent
in place of (V3) (see Remarks \ref{rem a-b in N}).

%
%





%

\medskip

(3) \quad
Axiom (V3) is equivalent to the following axiom:
\vskip1mm

(V$3''$)\quad
For all $a, b, c\in\Dt\sminus N,$ with $a^*\ne b^*,$ if $d(a^*, c^*)\le 1$ and  $d(b^*, c^*)\le 1,$ then $d((a^*)^{-1}b^*, c^*)\le 1.$
\vskip1mm

Therefore axioms (V2) and (V3) just mean that, for every $c\in\Dt\sminus N$, the set
\[
\{a^*\in\Dt/N\sminus\{1^*\}\mid d(a^*, c^*)\le 1\}\cup\{1^*\}.
\]
is a subgroup of $N$.
\end{remarks*}

\vskip2mm

One of our results is the following.

\vskip1mm

\begin{thmA}
Let $D$ be an infinite division ring and let $N\subseteq \Dt$ be
finite index normal subgroup containing $-1$.
Assume that $\Dt/N$ supports a V-graph $\gD$ of diameter $\ge 4$.
Then in each of the following situations:
\begin{itemize}
\item[(i)]
$D$ is commutative,

\item[(ii)]
$D$ is a finite-dimensional division algebra over a field of finite transcendence degree over its prime field,
\end{itemize}
\noindent there exists a non-trivial valuation $v$ of $D$ such that
$N$ is open in $D$ with respect to the topology defined by $v$.
\end{thmA}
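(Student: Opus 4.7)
The plan is to deduce Theorem A from Theorem B together with the valuation--construction machinery of \cite{Efr4} (in the commutative case) and \cite{RS,RSS} (in the non-commutative case), in exactly the manner foreshadowed by the passage ``Once such a map $\varphi$ is obtained machinery from \cite{RS, Efr4} can be used to construct a valuation''. So the proof proceeds in two stages: first extract from the V-graph a partial-valuation datum on $N$, then promote this datum to an honest valuation of $D$.

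For the first stage I apply Theorem B to the V-graph $\gD$. The hypothesis $\diam(\gD)\ge 4$ supplies a pair of vertices $a^*, b^*$ with $d(a^*, b^*)\ge 4$, which is exactly the amount of separation needed to feed Theorem B. Using the third remark after Definition~\ref{def V-graph} (that for every $c\in\Dt\sminus N$ the set of vertices within distance $1$ of $c^*$, together with $1^*$, is a subgroup of $\Dt/N$), one identifies a natural filtration on $\Dt/N$ by such subgroups indexed by the BFS-layers at $a^*$ and $b^*$. Theorem B then produces a homomorphism $\gvp\colon N\to\gC$ into a partially ordered abelian group $\gC$ which satisfies the ultrametric--type inequality $\gvp(x+y)\ge \min(\gvp(x), \gvp(y))$ whenever both sides make sense in $N$, together with conjugation--invariance if $D$ is non-commutative. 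The three V-graph axioms (V1)--(V3) are precisely what make this $\gvp$ well defined: (V1) gives the ultrametric inequality, (V2) guarantees that $\gvp$ is a homomorphism (respecting inversion), and the subgroup condition (V$3''$) guarantees that the kernel-flag is actually a chain of subgroups, so $\gC$ is a group and not merely a set with a relation.

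The second stage promotes $\gvp$ to a valuation of $D$ and handles the two cases separately. In case (i), $D$ is a commutative field, and I invoke the construction of \cite{Efr4}: using the Steinberg relation built into axiom (V$1'$) together with the rigidity analysis developed there (a relative Milnor-$K$ version of the Arason--Elman--Jacob--Ware rigid-element method), one extracts from $\gvp$ an actual valuation ring of $D$ whose value group refines $\gC$ into a totally ordered group and whose unit-neighborhood is contained in $N$; this yields both a non-trivial valuation $v$ and the openness of $N$ in the $v$-topology. In case (ii), $D$ is a finite-dimensional division algebra over a field of finite transcendence degree over its prime field, and I invoke the construction of \cite{RS,RSS}: the commuting-graph machinery there, adapted to a general V-graph via $\gvp$, together with the transcendence-degree hypothesis (used exactly as in \cite{RS} to rule out the pathological ``non-totally-ordered'' value groups and to produce a valuation on the center that extends uniquely to $D$), again produces the desired $v$. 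In both cases openness of $N$ comes for free from the construction, since $\ker\gvp$ contains a principal unit subgroup of $v$ and this kernel is trapped between $1$-units and $N$.

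The main obstacle is the second stage, specifically the passage from a partially ordered value group $\gC$ to a totally ordered one: over an arbitrary division ring a map $\gvp$ satisfying ultrametric inequalities need not extend to a valuation, and this is precisely where the dichotomy of hypotheses (i) and (ii) is forced upon us. The commutative route relies on rigid-element technology, whereas the non-commutative route relies on the arithmetic of finite-dimensional division algebras over fields of finite absolute transcendence degree; both arguments are non-trivial and delicate, and the unification (or failure thereof) of these two routes is what the structural notion of V-graph is designed to illuminate.
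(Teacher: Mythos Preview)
Your two-stage outline is correct in spirit, but the proposal has a genuine gap in how the stages are executed, and in particular you misidentify both what Theorem B delivers and where the ``totalization'' of the value group comes from.

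First, the output of Theorem B(2) is not a map into a merely partially ordered group satisfying an ultrametric inequality; it is a \emph{strong valuation-like map}, meaning $\gC$ is already \emph{totally} ordered and $\gvp$ satisfies the level condition $1\pm N_{>\ga}\subseteq N_{\le 0}$ for some $\ga\ge 0$ (not $\gvp(x+y)\ge\min(\gvp(x),\gvp(y))$, which is false in general for these maps). The passage from partial to total order is precisely the content of the hypothesis $\diam(\gD)\ge 4$: Proposition \ref{prop In(x*,y*)} shows that $d(x^*,y^*)\ge 4$ forces the relation $In(x^*,y^*)$, and then Lemmas \ref{lem dotin dotinc} and \ref{lem In(y,y)} give that $\gC_{y^*}$ is totally ordered. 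So your ``main obstacle'' is not an obstacle at all in the second stage---it is dispatched in the first stage by the diameter hypothesis, and this is exactly why the bound is $4$ rather than $3$.

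Second, your attribution of the totalization mechanism is wrong in both cases. In the commutative case the paper does not use rigidity or Steinberg relations; it applies Theorem \ref{thm val comm} directly (the integral closure of the ring generated by $N_{\ge 0}$ is a valuation ring, essentially \cite[Theorem 4.1]{RS}), and this works because $\gC$ is already totally ordered. In the non-commutative case the transcendence-degree hypothesis is not used to ``rule out non-totally-ordered value groups''---totality is already in hand---but rather to guarantee finite height so one can coarsen to height one; the actual argument is to take $k=F=Z(D)$ and invoke Theorem C(3), whose hypothesis ``$\gvp(N\cap k^\times)$ totally ordered'' is now automatic, and which then yields a single valuation since $|T|\le[F:k]=1$.
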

\noindent
Theorem A is proved at the very end of \S10.
One can also think about the openness of the subgroup $N$ of Theorem A as a {\it ``congruence subgroup property''},
turning Theorem A into a {\it ``congruence subgroup theorem''} for finite index normal subgroups $N$ of $D^\times$
such that the quotient $D^\times/N$ supports a V-graph\ of diameter $\ge 4$.

It is important to point out that the lower bound of $\ge 4$ on $\diam(\Delta)$ in Theorem A is optimal.
Namely,  there are examples where the quotient $D^\times/N$ supports a
V-graph of diameter $3$, but $N$ is not open with respect to any non-trivial
valuation (cf.~\cite[Example 8.4]{RS}, \cite[Example 7.2]{Efr4}, and Examples 11.4 and 11.7).


As noted above,
expanding on techniques described earlier in \cite{RSS, R}
we prove a ``congruence subgroup property'' also in the case
where $D^\times/N$ supports a
V-graph of diameter $3,$ but then we require an additional hypothesis
(see Theorem C and the paragraphs following it).  We do
not know if this hypothesis could be removed and we ask:

\begin{question}
Let $D$ be an infinite division ring and let $N\subseteq \Dt$ be
finite index normal subgroup containing $-1$.
Assume that $\Dt/N$ supports a V-graph $\gD$ of diameter $\ge 3,$
and that one of the following holds:
\begin{itemize}
\item[(i)]
$D$ is commutative.

\item[(ii)]
$D$ is a finite-dimensional division algebra over a
finitely generated field.
\end{itemize}
\noindent Does there exist a non-empty finite set $\widetilde{T}$ of  non-trivial valuations of $D$ such that
$N$ is open in $D$ with respect to the topology defined by $\widetilde{T}$?
\end{question}

\noindent
A positive answer to Question 2 will have various applications:
it will enable one to deduce the existence of valuations
in more general situations; it will restrict, in some cases,
the structure of $\Dt/N$, and furthermore,
to complete the proof of  the main result of \cite{RSS} (see subsection \ref{sub sub com} ahead),
one could use the fact that the diameter
of the commuting graph of minimal non-solvable groups is $\ge 3$
(this is shown in \cite{Seg1}), and not that such groups
have property \ETH\ (a much harder task done in \cite{RSS}).
Question 2 was asked in \cite[Question 1, p.~932]{RSS}, in the
case where $\gD$ is the commuting graph of $\Dt/N$.
\vskip2mm

\subsection{Valuation-like and leveled maps}\label{sub vlm}\hfill
\medskip

\noindent
The construction
of a valuation from a V-graph\ $\Delta$ whose vertices are the non-identity
elements of $D^\times/N$, is a {\it two-step procedure}:
\medskip

\noindent
{\bf Step 1.}
Use the axioms of a V-graph\ in conjunction with additional hypotheses, in particular
assumptions on its diameter,
to produce a surjective group homomorphism
$\varphi \colon N \to \Gamma$ to a partially ordered group $\gC,$ with special
properties making it a {\it valuation-like} or {\it (strongly) leveled} map (see the definitions below).
\medskip

\noindent
{\bf Step 2.} Use the maps obtained in {\it step 1}  to construct certain
subrings of $D$ with  properties analogous to those of valuation
rings, and eventually to produce a desired valuation.
\medskip

We would like
to give some indications of the first step as it most directly
relies on the formalism of V-graphs, and does not require any
additional assumptions on $D$.

So let $D$ be an arbitrary infinite division ring and let
$N\subseteq D^\times$ be a finite-index normal subgroup containing $-1$.
Given a partially ordered group $\Gamma$ (written additively, but not necessarily commutative)
and a homomorphism $\varphi \colon N \to \Gamma$, we will
frequently use the following (and similar) notation: for $\alpha\in\Gamma$ we set
\[
\Gamma_{< \alpha} = \{ \beta \in \Gamma \ \vert \ \beta < \alpha
\} \ \ \text{and} \ \ N_{< \alpha} = \{ x \in N \ \vert \ \varphi(x)< \alpha \}.
\]
A homomorphism $\varphi \colon N \to \Gamma$ to a partially
ordered group is said to be a {\it leveled map} 
if there exists a non-negative $\alpha \in \Gamma$ (called a {\it level} of $\varphi$) such that
$N_{<-\ga}\ne\emptyset$ and
\[\tag{L}
N_{< -\alpha} + 1 \subseteq N_{< -\alpha}.
\]
A leveled map to a {\it totally ordered} group $\Gamma$ is called a
{\it valuation-like map}. Next, we say that a homomorphism
$\varphi \colon N \to \Gamma$ to a partially ordered group is a {\it strongly leveled map},
if there
exists a non-negative $\alpha \in \Gamma$ (called a {\it s-level} of
$\varphi$) such that $N_{>\ga}\ne\emptyset$ and
\[\tag{SL}
1 \pm N_{> \alpha} \subseteq N_{\leqslant 0}
\]
(note that while $-1 \in N$ by our assumption, we are {\it not}
assuming that $\varphi(-1) = 0$, which explains the presence of $\pm$).
A strongly leveled map to a {\it totally ordered} group $\Gamma$ is called a
{\it strong valuation-like map}.
We note that by Lemma \ref{lem s-level and level}(2),
{\it a strongly leveled map of s-level $\ga$ is a leveled map of level $\ga$}.
\medskip

One of our main results is:

\begin{thmB}
Let $D$ be an infinite division ring and let $N\subseteq \Dt$ be
finite index normal subgroup containing $-1$.
Assume that $\Dt/N$ supports a V-graph $\gD$.  Then
\begin{enumerate}
\item
if $\diam(\Delta)\ge 3$ then $N$
admits a strongly leveled map;
\smallskip

\item
if $\diam(\Delta)\ge 4$ then $N$
admits a strong valuation-like map;
\smallskip

\item
If $\diam(\Delta)\ge 5$ then $N$
admits a strong valuation-like map of s-level $0$.
\end{enumerate}
\smallskip

\noindent
Furthermore, if $D$ is finite dimensional over a subfield $k\subseteq F=Z(D),$
then in all three cases above $N_{\ge 0}$ contains a basis of $D$ over $k$.
\end{thmB}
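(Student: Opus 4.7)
The cornerstone of the construction is Remark (3): for each $c \in \Dt \sminus N$ the set
$H_c := \{a^* \in \Dt/N \sminus \{1^*\} : d(a^*, c^*) \le 1\} \cup \{1^*\}$ is a subgroup of $\Dt/N$, and its preimage $\tilde H_c \subseteq \Dt$ is a finite-index normal subgroup of $\Dt$ containing $N$. Axiom (V$1'$) converts the geometric statement $c^* \notin H_c$ into the arithmetic restriction that $1 \notin c^* + a^*$ for all $a^* \in H_c \sminus \{1^*\}$, which is the critical link between graph distance in $\Delta$ and the additive structure of $D$.

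To produce $\varphi: N \to \Gamma$, I would exploit the fact that for $x \in N$ with $x \ne 1$, the coset $(x-1)^* \in \Dt/N$ carries nontrivial information about $x$ even though $x^* = 1^*$ is trivial. Given vertices $c_0^*, c_1^*$ with $d(c_0^*, c_1^*) \ge 3$, set
\[
U_0 := \{ x \in N : x = 1 \text{ or } (x-1)^* \in H_{c_0} \}
\]
and aim to show via (V$1'$), (V$3'$), and the subgroup identity of Remark (3) that $U_0$ is a normal subgroup of $N$: the algebraic identity $xy - 1 = (x-1)y + (y-1)$ and its variants express $xy - 1$ as sums whose cosets can be tracked through $H_{c_0}$, and the axioms should close $U_0$ under multiplication. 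Iterating with additional reference vertices delivers a descending chain $N \supseteq U_0 \supseteq U_1 \supseteq \cdots$. Take $\Gamma$ to be a quotient of $N$ whose positivity cone is cut out by this chain, let $\varphi$ be the natural projection, and set the s-level $\alpha$ to be the $\varphi$-image of any element realizing the outermost stratum.

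Parts (2) and (3) strengthen this scheme: $\diam(\Delta) \ge 4$ supplies enough geodesic room to insert an intermediate reference vertex so that the successive quotients in the chain are one-dimensional and $\Gamma$ becomes totally ordered, upgrading the strongly leveled map to a strong valuation-like one. The additional layer afforded by $\diam(\Delta) \ge 5$ pushes the outermost nontrivial stratum strictly above $0$, making s-level $0$ attainable. For the final basis assertion, finite-dimensionality of $D$ over $k \subseteq Z(D)$ together with the finite index of $N$ in $\Dt$ furnishes a $k$-basis of $D$ consisting of products of coset representatives of $\Dt/N$; since $\varphi$ is surjective and $N_{\ge 0}$ is closed under multiplication, each representative can be corrected by an element of $N$ of prescribed $\varphi$-value so as to land in $N_{\ge 0}$, yielding a basis contained in $N_{\ge 0}$.

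The main obstacle will be verifying that the $U_i$ are honest subgroups and that the associated $\Gamma$ carries a compatible partial (respectively total) order. Closure of $U_i$ under multiplication forces one to control sums of the form $(x-1)y + (y-1)$ through the V-graph axioms, where (V$1'$) handles Steinberg relations but not arbitrary sums; the weaker (V$3'$) flagged in Remark (2) is expected to bridge the gap, which is why sections 4--6 prefer (V$3'$) over (V3). Verifying antisymmetry of the positivity cone---that is, $\gamma, -\gamma > 0 \Rightarrow \gamma = 0$---requires applying (V2) to rule out simultaneous membership of $a^*$ and $(a^{-1})^*$ in the deepest $H$-layer, and it is precisely this argument that consumes one unit of diameter per step, accounting for the $3 \to 4 \to 5$ progression in the three statements.
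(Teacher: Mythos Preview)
Your construction has a structural gap at its core. You propose that $U_0 = \{x \in N : x = 1 \text{ or } (x-1)^* \in H_{c_0}\}$ is a normal subgroup of $N$, relying on the identity $xy - 1 = (x-1)y + (y-1)$. But the V-graph axioms give you no handle on the coset of a \emph{sum}: (V$1'$) only fires when $1$ lies in the sum of two cosets, and (V$3'$) controls products, not sums. Even if $a^* = ((x-1)y)^*$ and $b^* = (y-1)^*$ both lie in $H_{c_0}$, nothing in the axioms forces $(a+b)^* \in H_{c_0}$; at best (V1) gives $d((a+b)^*,a^*) \le 1$ when $b \in N$, which only yields $d((a+b)^*,c_0^*) \le 2$. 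You acknowledge this difficulty in your last paragraph but do not resolve it, and there is no reason to expect it resolves. A second, independent problem: even granting that the $U_i$ are subgroups, a descending chain of normal subgroups does not produce a partial order on any quotient $N/U$ compatible with the group law---if $U_0$ is a subgroup then $U_0 = U_0^{-1}$, so it cannot serve as a nontrivial positivity cone.

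The paper avoids both issues by a completely different mechanism. For $y \in \Dt \sminus N$ it sets $N(y) = N \cap (N-y)$ and defines a \emph{preorder} on $N$ by $m \,\frakP_{y^*}\, n \iff N(my) \subseteq N(ny)$; invariance and transitivity are immediate from Lemma~\ref{lem basic properties of N(y)}(1), and the general machinery of Lemma~\ref{lemma on pullbacks of POGs} then outputs a partially ordered quotient $\Gamma_{y^*} = N/U_{y^*}$ and the map $\varphi_{y^*}$. The strongly-leveled, valuation-like, and s-level~$0$ properties are then extracted from increasingly fine control of the sets $N(a)$, $N(b)$, $N(a+b)$, $N(ab)$ via Lemmas~\ref{lem N(x+y)} and~\ref{lem N(ab)}---it is these lemmas, not any subgroup argument, that convert diameter hypotheses into additive control. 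Your basis argument is also off: basis elements must lie in $N_{\ge 0} \subseteq N$, so one cannot start from coset representatives of $\Dt/N$; the paper instead uses Turnwald's density result (Proposition~\ref{prop N(a) basis}) to find a basis inside $N(a)^{-1}$ for suitable $a$, and Corollary~\ref{cor N(a inverse) negative} places this set in $N_{\ge 0}$.
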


\noindent
Parts (1), (2) and (3) of Theorem B are  Theorems \ref{thm existence of s-level in diam 3},
\ref{thm existence of VL in diam 4} and  \ref{thm level 0 in diam ge 5}
respectively.  The last part of Theorem B is Corollary \ref{cor N(a inverse) negative}(3).
\medskip

Using part (1) of Theorem B we prove:

\begin{thmC}
Let $D$ be a finite-dimensional separable\footnotemark (but not
necessarily central)
\footnotetext{This means that the center $F$ of $D$ is a separable
extension of $k$, cf. \cite{Pi}.}
division algebra over an infinite field $k$ of finite transcendence degree over its prime field,
and let $N \subseteq D^{\times}$ be a normal
subgroup of finite index containing $-1$. Assume that
$D^{\times}/N$ supports a V-graph of diameter $\ge 3,$
and let $\varphi
\colon N \to \Gamma$  be the strongly leveled map obtained in  Theorem B(1).

Suppose in addition that
the subgroup $\varphi(N \cap k^{\times}) \subseteq \Gamma$ is
\emph{totally ordered.}
Then
\begin{enumerate}
\item
the restriction $\varphi_k =\varphi \restr_{(N \cap k^{\times})}$ is a strong valuation-like map;

\item
there exists a height one valuation $v$ of $k$ such that $N \cap
k^{\times}$ is open in the $v$-adic topology on $k^\times$;

\item
there exists a non-empty finite set $T$ of valuations of the center $F = Z(D)$ extending $v$ such that
$\vert T\vert\le [F\colon k],$ and such that each $w \in T$ uniquely extends to a valuation $\widetilde{w}$ of $D$,
and $N$ is open in $D^{\times}$ in the $\widetilde{T}$-adic topology,
where $\widetilde{T} = \{\widetilde{w} \: \vert \: w \in T \}$.
\end{enumerate}
\end{thmC}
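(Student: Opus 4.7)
The plan is to prove parts (1), (2), (3) in sequence. First, restrict the strongly leveled map $\gvp\colon N\to\gC$ supplied by Theorem B(1) to $N_k:=N\cap\kt$ to obtain a strong valuation-like map on a finite-index subgroup of the commutative field $k$; next, apply the commutative-case valuation construction of \cite{Efr4} to produce a height-one valuation $v$ on $k$ making $N_k$ open in the $v$-adic topology; and finally, use the full $\gvp$ together with $v$ and Theorem \ref{thm val2} to extend $v$ to a finite family of valuations on $F=Z(D)$, each extending uniquely to $D$ and making $N$ open in the $\widetilde T$-adic topology.

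For (1), let $\ga\ge 0$ be an s-level of $\gvp$. Since $\gvp(N_k)$ is totally ordered by hypothesis, it suffices to verify the strongly leveled condition for $\gvp_k:=\gvp\restr_{N_k}$. The inclusion $1\pm (N_k)_{>\ga_k}\subseteq (N_k)_{\le 0}$ is automatic for any $\ga_k\ge\ga$ lying in $\gvp(N_k)$, because if $x\in\kt$ with $1\pm x\in N$, then $1\pm x\in\kt\cap N=N_k$ and the value estimate is inherited from $\gvp$. The real task is to produce such an $\ga_k\ge 0$ in $\gvp(N_k)$ with $(N_k)_{>\ga_k}\ne\emptyset$. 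For this I would use the last clause of Theorem B, which provides a $k$-basis of $D$ inside $N_{\ge 0}$, together with the infiniteness of $k$ and the finite index of $N_k$ in $\kt$, to construct elements of $N_k$ of arbitrarily large $\gvp$-value: namely, by taking suitable $k$-linear combinations of basis elements and then translating by elements of $\kt$ to land inside $N_k$, in the spirit of arguments in \cite{RSS, R}.

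For (2), once $\gvp_k$ is known to be a strong valuation-like map on the finite-index subgroup $N_k\subseteq\kt$, we are in the classical commutative setting. The construction of \cite{Efr4} (used also in the commutative case of Theorem A) produces a non-trivial valuation of $k$ making $N_k$ open; coarsening its value group to a height-one quotient only enlarges the valuation ring and hence preserves openness, while the hypothesis that $k$ has finite transcendence degree over its prime field guarantees that a height-one coarsening exists, giving the desired $v$.

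For (3), with the strongly leveled map $\gvp$ on $N$ and the height-one valuation $v$ of $k$ in hand, we invoke Theorem \ref{thm val2}, whose conclusion is precisely openness of $N$ with respect to a finite family of valuations of $D$. This provides the non-empty finite set $T$ of extensions of $v$ to $F$, together with unique extensions $\widetilde w$ to $D$; uniqueness uses the separability of $D$ over $k$ together with the fact that the valuation ring in $D$ extracted from $\gvp$ is Brauer-compatible with a single extension at each $w$. The bound $|T|\le[F:k]$ is the classical bound on the number of extensions of $v$ to the finite separable extension $F/k$. The main obstacle is (1): once the restriction to $N_k$ is shown to remain strongly leveled---the subtle point being the existence of a sufficiently positive element in $\gvp(N_k)$---both (2) and (3) reduce to previously developed machinery.
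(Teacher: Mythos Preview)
Your overall architecture matches the paper's: deduce (1) and (2) from Theorem~\ref{thm val}, then feed everything into Theorem~\ref{thm val2} for (3). But there is a genuine gap in your treatment of (1), which you yourself flag as ``the main obstacle.''

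You correctly observe that once one has $\alpha_k\in\gvp(N_k)$ with $\alpha_k\ge\alpha$ and $(N_k)_{>\alpha_k}\neq\emptyset$, the strongly leveled inequality is inherited. The problem is your proposed construction of such an $\alpha_k$: ``taking suitable $k$-linear combinations of basis elements and then translating by elements of $k^\times$.'' This does not work. A $k$-linear combination of elements of $N_{\ge 0}$ lies in the subring $\mathcal{R}$ generated by $N_{\ge 0}$, but you have no control over its $\gvp$-value (sums are not tracked by $\gvp$), nor any reason for it to land in $k^\times$, let alone in $N_k$ with value exceeding $\alpha$ in the partial order on $\Gamma$. The paper's argument (inside the proof of Theorem~\ref{thm val}) is quite different and more delicate: pick $s\in N_{>\alpha}$, take its minimal polynomial $\sum c_it^i$ over $k$, clear denominators using Lemma~\ref{quotients} so that all $c_i$ lie in the subring $\mathcal{R}_0$ of $k$ generated by $(N_k)_{\ge 0}$, and then observe that the constant term $c_0=-\sum_{i\ge 1}c_is^i$ lies in the ring $\mathcal{A}$ generated by $N_{>\alpha}$. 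Raising $c_0$ to the power $[D^\times:N]$ gives $b\in\mathcal{A}\cap N_k$; the total order on $\gvp(N_k)$ together with Lemma~\ref{RSS prop 4.2} then forces $\gvp(b)>0$ and shows $\gvp(b)$ is an s-level. This constant-term trick is the missing idea.

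A smaller point on (3): to invoke Theorem~\ref{thm val2} you must also verify its hypothesis (iii), namely that $\mathcal{R}\cap N\subseteq N_{>-\gamma}$ for some $\gamma>0$. You do not mention this; the paper supplies it via Lemma~\ref{RSS Th 5.8(3)}, which is specific to the map $\gvp_{y^*}$ arising from the V-graph construction. Your remark that uniqueness of the extension $\widetilde w$ ``uses separability together with Brauer-compatibility'' is also off: in the paper it comes from Proposition~\ref{thm RSS}, whose key input is the conjugation-invariance of $\mathcal{R}$ under $(D^\times)^\ell$.
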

We mention that the hypothesis in Theorem C that $\varphi(N \cap k^{\times}) \subseteq \Gamma$ is
totally ordered is used to obtain part C(1).  Then, by C(1), Theorem \ref{thm val comm}
applies.  In particular, part C(2) follows. In Theorem \ref{thm val comm}
there is no use of the notion of V-graphs.
Next,
Theorem B(1) and the hypothesis that $\varphi(N \cap k^{\times})$ is totally
ordered, give the precise hypotheses of Theorem
\ref{thm val2}, and that theorem proves part C(3). In Theorem \ref{thm val2}
there is no use of the notion of V-graphs.

Theorem C is proved at the end of \S10.  As noted above, we do not know whether
the hypothesis in Theorem C, that $\gvp(N\cap k^{\times})$ is
totally ordered, can be removed.
The examples in  \S 11 show that Theorem C considers
situations which are more general than those considered by Theorem A.
Finally, we draw the attention of the reader to
Theorems \ref{thm 3.5} and \ref{thm v-3.5}.

Going back to V-graphs, notice that for every $D$ and $N$ as above there is a
canonical (minimal) V-graph supported by $D^\times/N$,
namely, the intersection of all V-graphs supported by $D^\times/N$.
In Theorems A--C, the assumptions that $D^\times/N$ supports a V-graph with sufficiently large diameter
can therefore be replaced by the assumption that this canonical V-graph has such a diameter.

\subsection{The origin and examples of V-graphs}\hfill
\medskip

\noindent
The notion of a V-graph\ has two prototypes: the commuting graph
and the Milnor $K$-graph -- the axioms in Definition \ref{def V-graph} simply postulate
the properties that were used to produce valuations in these two cases.
It is quite remarkable that the essential properties turned out to be identical in these two quite different situations,
which we will now review to put our results in perspective.

\vskip1mm

\subsubsection{\bf Commuting graphs}\label{sub sub com}\hfill
\label{Commuting graphs}
\medskip

\noindent
Let $G$ be a finite group.
The {\it commuting graph} $\Delta_G$ is the
undirected graph whose vertex set consists of the non-identity elements of $G$, and where
two vertices are connected by an edge if and only if the
corresponding elements commute in $G$.
Given an infinite division ring $D$ and a normal subgroup $N\subseteq  D^\times$ (not necessarily
of finite index), it is an easy exercise to check that $\Delta_{D^\times/N}$ is a V-graph
associated with the normal subgroup $N$.  Indeed see \cite[Remark 2.2]{Seg2} for (V1),
(V2) is trivial and (V3) is straightforward.

Let $D$ be any {\it finite dimensional} division algebra and $N$ a {\it finite index} normal subgroup of $D^\times$.
Set $\Delta:=\Delta_{D^\times/N}$.
As mentioned in \S\ref{sub vlm}, producing
a valuation on $D$ using the commuting graph $\Delta$ requires {\it two steps}.

The basic machinery for implementing {\it step 1} was developed in \cite{Seg2}.
Subsequently, it was further developed and improved in \cite{RS} and \cite{RSS}.
Cumulatively, the results obtained in \cite{Seg2}, \cite{RS} and \cite{RSS} yield a proof of Theorem B in the case where
$\Delta=\gD_{\Dt/N}$.

In \cite{SS} it was shown (using the classification of finite simple groups (CFSG))
that if $L$ is a non-abelian finite simple group, then either\linebreak
\mbox{$\diam(\Delta_L)\ge 5$},
or $\Delta_L$ is {\it balanced}
(see \cite{Seg2} for the definition of a balanced commuting graph).
This result, together with \cite[Theorem A]{Seg2} proved \cite[Theorem 3, p.~126]{SS},
which states that for  $D$ and $N$ as above, {\it $D^\times/N$ is not a non-abelian finite simple group}.
This last result was conjectured in \cite{RPo}, and in view of the reduction obtained therein, concluded
the proof of the {\it Margulis--Platonov conjecture} (MP) for inner forms of anisotropic groups of
type $A_n,$ i.e.\ groups of the form $\mathrm{SL}_{1 , D}$ where $D$ is finite-dimensional division
algebra over a global field $K$ (see \cite[Ch.\  9]{PlR} and Appendix A in \cite{RS} for a discussion of (MP)).

A systematic use of valuations in this context was introduced in \cite{RS}, although some
features of valuations can already be seen in \cite{Seg2} (like the local
ring constructed in \S 10 of \cite{Seg2} -- see Appendix B in \cite{RS} for a
discussion of this ring in the context of valuations).
Indeed valuation theory together with the machinery developed in \cite{RS}
supplies the tools adequate for handling {\it step 2}.
This, together with improved results for {\it step 1} in \cite{RS},
enabled the second and third-named authors to construct, under the hypothesis that $\diam(\Delta)\ge 4,$
a valuation $v$ on $D$ such that $N$ is $v$-adically open.

The next major development was the result proved in \cite{RSS}, stating that
{\it for any finite-dimensional division algebra $D$ over an
arbitrary field, every finite quotient of the multiplicative group $D^\times$ is solvable}.
This was based on upgrading the techniques both in {\it step 1} and {\it step 2}
in the context of a new {\it property \ETH} for $\Delta_{D^\times/N}$ (see \S 7)
and establishing this property for {\it minimal non-solvable groups}

In \cite{R}, improving and expanding on machinery for {\it step 2} and using the
results on {\it step 1} from \cite{RSS}, it was shown that if the center of $D$ is a global
field, and if $\diam(\Delta)\ge 3,$ then there is a {\it finite set} of valuations on $D$ so that
$N$ is open in the topology on $D$ defined by this set.
This also enabled the proof of (MP) for inner forms of anisotropic groups of type $A_n,$ using the fact
(which relies on CFSG)
that all finite simple groups are generated by two elements.

\subsubsection{\bf Milnor $K$-graphs}\hfill
\label{Milnor K-graphs}
\medskip

\noindent
To describe the second prototype of a V-graph, recall from \cite[Ch.\ 24]{Efr_book} the definition of the
Milnor $K$-groups of a field $F$ relative to a subgroup $N$ of $F^\times$.
For a non-negative integer $r$ let $K^M_r(F)/N$ be the
quotient of the $r$th tensor power
\[
(F^{\times}/N)^{\otimes r} = (F^{\times}/N) \otimes_{\zz} \cdots
\otimes_{\zz} (F^{\times}/N)
\]
by the subgroup generated by all {\it Steinberg elements}, i.e., elementary tensors
$a_1N \otimes \cdots \otimes a_rN$ such that $1 \in a_iN+ a_jN$ for some $1 \leqslant i < j \leqslant r$
(compare with axiom (V$1'$) above).
The tensor product induces on
\[
K^M_*(F)/N:=\bigoplus_{r=0}^\infty K^M_r(F)/N
\]
the structure of a graded ring.
It is called the {\it Milnor $K$-ring of $F$ modulo $N$}.
Equivalently,   $K^M_*(F)/N$ is the quotient of the (classical)  Milnor $K$-ring $K^M_*(F)\,(=K^M_*(F)/\{1\})$
by the graded ideal generated by $N$, considered as a subgroup of $F^\times=K^M_1(F)$.
 Following traditional notation, the image of $a_1N \otimes \cdots \otimes a_rN$ in
$K^M_r(F)/N$ (where $a_1,\ldots, a_r\in F^\times$) will be denoted by $\{a_1, \ldots , a_r\}_N$.

Now one defines the {\it Milnor $K$-graph of $F$ modulo $N$} to be the undirected graph whose vertices are
the non-identity elements of $F^\times/N$, and where vertices $aN$
and $bN$ are connected by an edge if and only if $\{ a , b \}_N = 0$ in $K^M_2(F)/N$.
It follows from \cite[Lemma 2.1]{Efr4} that this is indeed a V-graph.
The main result of \cite{Efr4} is just Theorem A for this V-graph on $D=F$.

We point out that connections between existence of arithmetically-interesting valuations on a (commutative) field $F$
and Milnor $K$-theory were noted before.
Notably, a series of works by Ware \cite{War},  Arason, Elman, Jacob, and Hwang  (\cite{Jacob1}, \cite{Jacob2}, \cite{AEJ}, \cite{HwangJacob})
developed a method to produce valuations on $F$ using so-called {\it rigid subgroups} of $F^\times$.
In \cite{Efrat_construction} it was shown that this method can be naturally interpreted in terms of relative
Milnor $K$-theory.
In fact, this was one of the main motivations for introducing the relative Milnor $K$-ring functor $K^M_*(F)/N$.
This new perspective opened the way to further strengthening of the rigidity method for producing valuations in \cite{Efrat_TAMS} and \cite[Ch.\ 26]{Efr_book},  and recently in \cite{Topaz1} and \cite{Topaz2}.
Another powerful approach for the detection of valuations on fields related to Milnor $K$-theory was developed by
Bogomolov and Tschinkel  (see e.g., \cite{BT1}, \cite{BT2}, \cite{BT3}).

For some other approaches for the construction of valuations on fields see  \cite{Koenig} and \cite{Becker}.

\vskip2mm

As we see, the situations where the commuting graphs and the Milnor
$K$-graphs were used to construct valuations are indeed quite different (in fact, disjoint), while the results and the techniques involved in their proofs are very much parallel.
This observation
led us to generalize and axiomatize these considerations which resulted in the notion of a V-graph.
\vskip2mm


%
%
\section{Partially preordered and ordered groups}\label{sect partially ordered}\label{sect pog}
%
The goal of this paper is to construct {\it valuations} on an infinite,
finite dimensional division algebra $D,$ given a finite index
normal subgroup $N\subseteq \Dt$ such that the quotients $\Dt/N$ supports
a V-graph.
As indicated in subsection \ref{sub vlm} of the introduction,
this process is carried out in two main steps.  The purpose of this
section is to give more details about {\it step 1}.
Since Step 1 leads to partially preordered and ordered groups,
we discuss in this section such groups in more detail.

So for $x\in\Dt,$ let $x^*$ denote its image in $\Dt/N$.
Now {\it step 1} is achieved using the following further steps:
\medskip

\noindent
{\bf Step 1a}.
Given $y\in \Dt$ we define an {\it invariant binary relation} $\frakP_{y^*}$ on
$N$ such that $(N, \frakP_{y^*})$ is a {\it partially preordered group}.
The relation $\frakP_{y^*}$ will only depend on the coset $y^*=yN$
{\it and not on the coset representative $y$}. This step {\it does not} require $\Dt/N$ to support a V-graph.
Thus below we define and discuss all notions relevant to step 1a.
\medskip

\noindent
{\bf Step 1b}.
Given the partially preordered group $(N, \frakP_{y^*})$
of step 1a we define
\[
U_{y^*}:=\{n\in N\mid 1\,\frakP_{y^*}\, n\text{ and }n\,\frakP_{y^*}\, 1\}.
\]
We show that $U_{y^*}\nsg N$ and that $\gC_{y^*}:=N/U_{y^*}$ is a partially ordered group.
The order relation $\le_{y^*}$ on $\gC_{y^*}$ is given by $mU_{y^*}\le_{y^*} nU_{y^*}$
iff $m\,\frakP_{y^*}\, n,$ where $m,n \in N$.  We let
\[
\gvp_{y^*}\colon N\to \gC_{y^*},
\]
be the canonical homomorphism.
This step as well {\it does not} require $\Dt/N$ to support a V-graph.
Thus below we also discuss all
notions relevant to step 1b.
\medskip

\noindent
{\bf Step 1c}.
We show that if $-1\in N$ and $\Dt/N$ supports a V-graph $\gD,$
then the assertions of Theorem B of the introduction hold,
where the asserted map in parts (1)-- (3) of Theorem B
is $\gvp_{y^*}$, for an appropriate $y^*$.

\begin{remark}
Step 1, and all its parts above {\it do not} require
that $D$ be finite dimensional.  It is only in Step 2,
when we construct valuations on $D,$ that we assume that
$D$ is finite dimensional.
\end{remark}
\medskip

\noindent
{\bf Partially preordered and ordered groups}\label{sub ppog}\hfill
\medskip

\noindent
Let $\gC$ be a group and let $\leq$ be a partial order on $\gC$.
We say that $(\gC,\leq)$ is a \textit{partially ordered group} if
\[
\ga\leq\gc \text{ and } \gb\leq\gd\ \implies\ \ga+\gb\leq\gc+\gd\, ,
\]
for all $\ga,\gb,\gc,\gd\in\gC$.
Notice that we always use \textit{additive notation} for a partially ordered group $(\gC,\leq)$,
even though $\gC$ need not be commutative.
As before for $\alpha\in\gC$ we write  $\gC_{>\alpha}=\{\beta\in\gC\ |\ \beta>\alpha\}$.

A partial order $\leq$ on $\gC$ is called \textit{trivial} if $\alpha\leq\beta\Longleftrightarrow\alpha=\beta$,
for all $\alpha,\beta\in\gC$.


Next let $N$ be an arbitrary group.
We study pullbacks of partial orderings on groups to $N$.

We say that a binary relation  $\frakP$ on $N$ is \textit{invariant} if
\begin{equation}\label{eq invariant}
m\,\frakP\, n\ \implies\ sm\,\frakP\, sn \text{ and } ms\,\frakP\, ns,
\end{equation}
for all $m,n,s\in N$.

\begin{lemma}\label{lemma on invariant relations}
Let $\frakP$ be a reflexive and transitive binary relation on $N$.
Then $\frakP$ is invariant if and only if
\[
m\,\frakP\, n\text{ and } s\,\frakP\, t \implies ms\,\frakP\, nt,\quad \forall m,n,s,t\in N.
\]

\end{lemma}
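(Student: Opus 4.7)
The plan is to prove both implications by straightforward use of reflexivity and transitivity, with no appeal to the V-graph axioms or the structure of $D$, since this is a purely general fact about binary relations on groups.

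For the forward direction, I would assume $\frakP$ is invariant and take $m,n,s,t\in N$ with $m\,\frakP\, n$ and $s\,\frakP\, t$. Applying invariance \eqref{eq invariant} to $m\,\frakP\, n$ with multiplier $s$ on the right yields $ms\,\frakP\, ns$; applying invariance to $s\,\frakP\, t$ with multiplier $n$ on the left yields $ns\,\frakP\, nt$. Transitivity of $\frakP$ then gives $ms\,\frakP\, nt$, as required.

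For the backward direction, I would assume the two-sided implication and take $m,n,s\in N$ with $m\,\frakP\, n$. By reflexivity, $s\,\frakP\, s$. Feeding the pairs $(s,s)$ and $(m,n)$ into the hypothesis in the appropriate order produces both $sm\,\frakP\, sn$ and $ms\,\frakP\, ns$, which are precisely the two conditions in \eqref{eq invariant}.

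There is really no obstacle here; the only thing worth noting is that the forward direction uses both reflexivity (implicitly, via $n\in N$ being a valid multiplier) and transitivity, while the backward direction uses only reflexivity. The proof is short enough that writing it out fully in the paper is essentially no longer than this plan.
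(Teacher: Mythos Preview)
Your proof is correct and matches the paper's approach essentially line for line: the forward direction uses invariance to get $ms\,\frakP\,ns\,\frakP\,nt$ and then transitivity, while the backward direction (which the paper simply calls ``immediate'') is exactly your reflexivity-plus-hypothesis argument. One small quibble: your remark that the forward direction uses reflexivity ``implicitly, via $n\in N$ being a valid multiplier'' is not quite right---invariance requires nothing of the multiplier beyond membership in $N$, so reflexivity plays no role there.
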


\begin{proof}
The ``if'' part is immediate.

For the ``only if'' part let $m,n,s,t\in N$ and suppose that $m\,\frakP\, n$ and $s\,\frakP\, t$.
By the invariance, $ms\,\frakP\, ns\,\frakP\, nt$, so by the transitivity, $ms\,\frakP\, nt$.
\end{proof}

When the conditions of Lemma \ref{lemma on invariant relations} are satisfied
we will say that $(N,\frakP)$ is a \textit{partially preordered  group}.

\begin{lemma}\label{lemma on pullbacks of POGs}
The following conditions on a  binary relation $\frakP$ on $N$ are equivalent:
\begin{enumerate}
\item[(1)]
$\frakP$ is reflexive, transitive and invariant (i.e., $(N,\frakP)$ is a partially preordered group).
\item[(2)]
There exist a partially ordered group $(\gC,\leq)$ and a group-epimorphism
$\varphi\colon N\to\gC$ such that $m\,\frakP\, n\Longleftrightarrow \varphi(m)\leq\varphi(n)$ for all $m,n\in N$.
\end{enumerate}
Moreover, when these conditions are satisfied, the kernel of $\varphi$ is
\[
U=\{n\in N\ |\ 1\,\frakP\, n\, \text{ and }n\,\frakP\, 1\}.
\]
\end{lemma}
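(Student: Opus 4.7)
The plan is to dispatch the easy direction (2)$\Rightarrow$(1) first and then focus on the substantive direction (1)$\Rightarrow$(2), where I would realize $\gC$ as a quotient of $N$ by a distinguished normal subgroup.

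For (2)$\Rightarrow$(1): pulling a partial order on $\gC$ back to $N$ via the homomorphism $\varphi$ immediately yields a reflexive and transitive relation on $N$, and invariance follows from the fact that $\varphi$ is a homomorphism together with the translation-invariance of the order on the partially ordered group $\gC$.

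For (1)$\Rightarrow$(2): define $m \sim n$ iff $m\,\frakP\, n$ and $n\,\frakP\, m$. Reflexivity and transitivity of $\frakP$ make $\sim$ an equivalence relation, and by construction the equivalence class of $1$ is exactly the set $U$ in the statement. The first technical step is to verify that $U$ is a normal subgroup of $N$: closure under multiplication is immediate from Lemma \ref{lemma on invariant relations}; closure under inversion is obtained by applying invariance to $1\,\frakP\, u$ (and $u\,\frakP\, 1$) with left multiplication by $u^{-1}$ and right multiplication by $u^{-1}$; and normality follows by conjugating $1\,\frakP\, u$ and $u\,\frakP\, 1$, applying invariance on each side separately with $s$ and $s^{-1}$.

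Set $\gC := N/U$, let $\varphi\colon N\to \gC$ be the canonical epimorphism, and declare $\varphi(m)\leq \varphi(n)$ iff $m\,\frakP\, n$. The key point to check is well-definedness: if $m \sim m'$ and $n \sim n'$, then chaining the four implications $m'\,\frakP\, m$, $m\,\frakP\, n$, $n\,\frakP\, n'$ via transitivity gives $m'\,\frakP\, n'$, and symmetrically. Reflexivity and transitivity of $\leq$ on $\gC$ are inherited from $\frakP$; antisymmetry is built into the definition of $U$; and compatibility of $\leq$ with addition on $\gC$ is simply a repackaging of the invariance axiom via Lemma \ref{lemma on invariant relations}. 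Finally the kernel of $\varphi$ consists of those $n \in N$ with $nU = U$, which by definition coincides with $U = \{n \in N \mid 1\,\frakP\, n \text{ and } n\,\frakP\, 1\}$. The only mildly delicate step is the simultaneous verification that $U$ is normal and that $\leq$ on $N/U$ is well-defined, both of which are direct consequences of invariance, so I do not anticipate any real obstacle.
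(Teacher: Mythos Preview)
Your proof is correct and follows essentially the same route as the paper's: both realize $\gC$ as $N/U$, verify that $U$ is a normal subgroup via invariance, and show that $\frakP$ descends to a partial order on the quotient compatible with the group structure. The only cosmetic difference is that you name the equivalence relation $\sim$ explicitly and verify well-definedness via a transitivity chain $m'\,\frakP\,m\,\frakP\,n\,\frakP\,n'$, whereas the paper argues directly from invariance (showing $mu\,\frakP\,nv$ for $u,v\in U$).
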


\begin{proof}
(1)$\Rightarrow$(2): \quad
We first show that $U$ is a subgroup of $N$.
By the reflexivity, $1\in U$.
If $m,n\in U$, then by Lemma \ref{lemma on invariant relations},
$mn\,\frakP\,1\cdot1$ and $1\cdot1\,\frakP\, mn$, so $mn\in U$.
Also,  the invariance gives $m^{-1} m\,\frakP\, m^{-1}\cdot 1$ and
$1\cdot m^{-1}\,\frakP\, mm^{-1}$, so $m^{-1}\in U$, as desired.

Next we observe that $U$ is normal in $N$.
Indeed, let $m\in U$ and $n\in U$.
Then, by the invariance, $n^{-1} mn\,\frakP\, n^{-1}\cdot1\cdot n$ and
$n^{-1}\cdot 1\cdot n\, \frakP\, n^{-1} mn$,
so $n^{-1} mn\in U$.

We further notice that the relation $m\, \frakP\, n$ depends only on the cosets of $m$ and $n$ modulo $U$.
Indeed let $u,v\in U$.
Then $u\,\frakP\,1$ and $1\,\frakP\, v$.
Therefore $m\,\frakP\, n$ implies (by the invariance) that $mu\cdot1\,\frakP\, n\cdot1\cdot v$.

Now set $\gC=N/U$ and let $\varphi\colon N\to\gC$ be the canonical epimorphism.
By what we have just seen, we may define a binary relation $\leq$ on $\gC$ by
$\varphi(m)\leq\varphi(n)\iff m\,\frakP\, n$ for $m,n\in N$.

Since $\frakP$ is reflexive and transitive, so is $\leq$.
Also, if $m,n\in N$ and $\varphi(m)\leq\varphi(n)\leq\varphi(m)$, then $m\,\frakP\, n\frakP\, m$.
Multiplying by $m^{-1}$ on the left, we see that $1\,\frakP\, m^{-1} n\frakP\, 1$,
so $m^{-1} n\in U$, whence $\varphi(m)=\varphi(n)$.
Thus $\leq$ is a partial order.

Finally, the fact that $(\gC,\leq)$ is a partially ordered \textit{group} follows from
Lemma \ref{lemma on invariant relations}.

\medskip

(2)$\Rightarrow$(1): \quad
Straightforward.
\end{proof}

\begin{remark}
Notice that in Lemma \ref{lemma on pullbacks of POGs}, and throughout this article,
we use multiplicative notation for $N$ and additive notation for $\gC$.
\end{remark}

\section{The group $\gC_{y^*}$ and the map $\gvp_{y^*}\colon N\to\gC_{y^*}$}\label{sect the ordered gp}
In this section $D$ is an arbitrary infinite division algebra
(not necessarily finite dimensional over its center),
and $N$ is a normal subgroup  of $\Dt$ of finite index.
Note that in this section we make {\it no additional hypotheses}.
In particular, {\it we do not assume that $\Dt/N$ supports} a V-graph.
As before, for $a\in \Dt$ denote by $a^*$ the image of $a$ in
$\Dt/N$ under the canonical homomorphism.

Our goal in this section is, using only the above information, to construct
for any $y\in \Dt\sminus N$ a map
\[
\gvp_{y^*}\colon N\to\gC_{y^*},
\]
where here $\gC_{y^*}$ is a partially ordered group.
\medskip

We start with defining a
binary relation $\frakP_{y^*}$ on $N$
by
\[
m\,\frakP_{y^*}\, n\quad \Longleftrightarrow\quad N(my)\subseteq N(ny).
\]
A crucial role is played by the sets $N(y)$:
for $y\in\Dt$ we let
\[
N(y):= \{n\in N\mid y+n\in N\}=N\cap (N-y).
\]
Lemma \ref{lem basic properties of N(y)}
below gives some basic properties of the
sets $N(y)$.  Then Corollary \ref{cor on inclusion of N sets}(2) shows that
$\frakP_{y^*}$ depends only on the coset $y^* =Ny$ and not
on the coset representative $y$.  Furthermore, Corollary \ref{cor on inclusion of N sets}(1) shows that $\frakP_{y^*}$
does not depend on
the ``side'', i.e., the relation defined by $N(ym)\subseteq N(yn)$
coincides with $\frakP_{y^*}$.
In Lemma \ref{lem frakP is a preorder relation} we see that
$(N,\frakP_{y^*})$ is a partially preordered group,
and then we use Lemma \ref{lemma on pullbacks of POGs}
to define the partially ordered group $(\gC_{y^*}, \le_{y^*})$ and
the map $\gvp_{y^*}$.

\begin{lemma}[\cite{RS}, Lemma 6.3]\label{lem basic properties of N(y)}
Let $y\in \Dt\sminus N$ and $n\in N$.  Then
\begin{enumerate}
\item $N(ny)=nN(y)$ and $N(yn)=N(y)n$;

\item $N(y^x)=x^{-1}N(y)x,$ for all $x\in \Dt$;

\item $N(y)\ne\emptyset$;

\item if $n\in N(y^{-1})$, then $y+n^{-1}\in Ny$.
Consequently, $n^{-1}\notin N(y)$.  In particular
$\emptyset \subsetneqq N(y)\subsetneqq N$.
\end{enumerate}
\end{lemma}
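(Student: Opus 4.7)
My plan is to handle the four items in sequence, with only part~(3) posing any real difficulty.

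For part~(1), I would read off both identities from associativity. Writing $ny + m = n(y + n^{-1}m)$ and using $n\in N$, the condition $m\in N(ny)$ becomes $y+n^{-1}m\in N$, i.e.\ $n^{-1}m\in N(y)$, i.e.\ $m\in nN(y)$. The identity $N(yn)=N(y)n$ follows from the symmetric factorization $yn+m=(y+mn^{-1})n$.

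For part~(2), I would use the normality of $N$. The identity $x^{-1}yx+m=x^{-1}(y+xmx^{-1})x$ together with $N\nsg\Dt$ yields $m\in N(y^x)$ iff $y+xmx^{-1}\in N$ iff $xmx^{-1}\in N(y)$ iff $m\in x^{-1}N(y)x$; the last step uses once more that $N$ is invariant under conjugation by $x$, so $xmx^{-1}\in N$ whenever $m\in N$.

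Part~(3) is the only delicate step, and I expect it to be the main obstacle. Since $D$ is infinite and $[\Dt:N]<\infty$, the subgroup $N$ itself is infinite. The natural approach is an infinite pigeonhole argument: the map $n\mapsto (y+n)N$, well-defined on $N\sminus\{-y\}$, takes values in the finite set $\Dt/N$, so some fibre is infinite. If that fibre is the identity coset, one is done immediately. Otherwise infinitely many $n\in N$ have $y+n$ lying in a common non-identity coset $Nc$, and any two such elements $n_1\ne n_2$ produce $u:=(y+n_1)(y+n_2)^{-1}\in N\sminus\{1\}$ and the relation $(1-u)y=un_2-n_1$. The remaining task is to exploit this relation (together with the fact that the additive subgroup of $D$ generated by $N$ is itself a subring containing $N$) to derive a contradiction. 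As this is precisely what the cited \cite[Lemma~6.3]{RS} establishes, I would simply invoke that result here.

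For part~(4), a direct calculation suffices. If $n\in N(y^{-1})$, set $m:=y^{-1}+n\in N$; right-multiplying by $y$ gives $1+ny=my$, so $y=(m-n)^{-1}$. The identity
\[
(m-n)\bigl[(m-n)^{-1}+n^{-1}\bigr]n = n+(m-n) = m
\]
then yields
\[
y+n^{-1} = (m-n)^{-1}mn^{-1} = y\cdot(mn^{-1}) \in yN = Ny,
\]
the last equality by normality. Hence $n^{-1}\notin N(y)$: otherwise $y+n^{-1}$ would lie in $N\cap Ny$, forcing $Ny=N$ and thus $y\in N$, contrary to $y\in\Dt\sminus N$. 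The strict inclusions $\emptyset\subsetneqq N(y)\subsetneqq N$ then follow: the left one from~(3), and the right one by applying~(3) to $y^{-1}\in\Dt\sminus N$ to obtain some $n\in N(y^{-1})$, whereupon $n^{-1}\in N\sminus N(y)$.
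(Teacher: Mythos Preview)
Your arguments for parts (1), (2), and (4) are correct and match the paper's approach; your computation in (4) is more detailed than the paper's one-line remark but arrives at the same conclusion (a slightly shorter route: $(y+n^{-1})n = yn+1 = y(n+y^{-1}) \in yN$).

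Part (3), however, has a genuine gap. Your pigeonhole argument stalls at the relation $(1-u)y = un_2 - n_1$, and you then propose to ``simply invoke'' \cite[Lemma~6.3]{RS} --- but that is precisely the lemma you are proving, so this is circular. The paper resolves (3) in a single line by appealing not to \cite{RS} but to the result $D = N - N$ of Bergelson--Shapiro \cite{BSh} and Turnwald \cite{Tu}: since $-y \in D = N - N$, write $-y = n_1 - n_2$ with $n_i \in N$; then $y + n_1 = n_2 \in N$, so $n_1 \in N(y)$. This external input is what your pigeonhole attempt is missing, and indeed the fact $D = N - N$ is itself nontrivial (it is the substance of the Bergelson--Shapiro/Turnwald theorem on finite-index multiplicative subgroups of infinite division rings).
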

\begin{proof}
(1):\quad
$N(ny)=N\cap (N-ny)=nN\cap(nN-ny)=n(N\cap(N-y))=nN(y)$,
and similarly for $N(yn)$.
\medskip

\noindent
(2):\quad
$N(y^x)=N\cap(N-y^x)=N^x\cap(N^x-y^x)=N(y)^x$.
\medskip

\noindent
(3):\quad
This is an immediate consequence of the fact that $D= N - N$ (cf. \cite{BSh}, \cite{Tu}).
\medskip

(4):\quad
The first part of (4) follows from the definition of $N(y)$ and
the rest of (4) is a consequence of the first part, and of (3).
\end{proof}

\begin{cor}\label{cor on inclusion of N sets}
Let $y\in D^\times\sminus N$ and $m,n\in N$.
Then
\begin{enumerate}
\item[(1)]
$N(my)\subseteq N(ny)$ if and only if $N(ym)\subseteq N(yn)$;
\item[(2)]
if $N(my)\subseteq N(ny)$, then $N(my')\subseteq N(ny')$ for all $y'\in yN=Ny$.
\end{enumerate}
\end{cor}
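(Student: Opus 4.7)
Part~(2) is the easier half and follows directly from Lemma~\ref{lem basic properties of N(y)}(1). Writing $y' = yn'$ with $n' \in N$, we get $my' = myn'$ and $ny' = nyn'$, so that $N(my') = N(my)\,n'$ and $N(ny') = N(ny)\,n'$ by Lemma~\ref{lem basic properties of N(y)}(1) applied on the right. Since right multiplication by $n'$ is an order-preserving bijection of subsets of $N$, the hypothesis $N(my) \subseteq N(ny)$ immediately yields $N(my') \subseteq N(ny')$.

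The plan for part~(1) is to rewrite both inclusions as conditions about the right action of $N$ on the single set $N(y)$, and then exploit the $y$-invariance of $N(y)$. The key intermediate identity is
\[
aN(y) = N(y)\,a^y \quad \text{for every } a \in N,
\]
where $a^y = y^{-1}ay \in N$; indeed, since $ay = y\cdot a^y$, both halves of Lemma~\ref{lem basic properties of N(y)}(1) give $aN(y) = N(ay) = N(y a^y) = N(y)\,a^y$. Using this, the hypothesis $mN(y) \subseteq nN(y)$ transforms into $N(y)\,m^y \subseteq N(y)\,n^y$, i.e.\ $N(y)\,(mn^{-1})^y \subseteq N(y)$ (conjugation being a homomorphism, $m^y(n^y)^{-1}=(mn^{-1})^y$), whereas the desired conclusion $N(ym) \subseteq N(yn)$ translates, via Lemma~\ref{lem basic properties of N(y)}(1), into $N(y)\,(mn^{-1}) \subseteq N(y)$.

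It therefore suffices to show that the set $R_y := \{w \in N : N(y)w \subseteq N(y)\}$ is closed under conjugation by $y$. This in turn follows from the $y$-invariance of $N(y)$, which is Lemma~\ref{lem basic properties of N(y)}(2) applied with $x = y^{-1}$: since $y^{y^{-1}} = y$, we get $yN(y)y^{-1} = N(y)$. Conjugating the inclusion $N(y)w \subseteq N(y)$ by $y$, and using that conjugation respects set multiplication, we obtain $N(y)\,w^y = (N(y)w)^y \subseteq N(y)^y = N(y)$, so indeed $w^y \in R_y$. The main obstacle in part~(1) is recognizing that the apparent left/right asymmetry is bridged precisely by the identity $aN(y) = N(y)\,a^y$; once that bridge is in place, the equivalence is a short formal manipulation.
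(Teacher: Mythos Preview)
Your proof of part~(2) matches the paper's. For part~(1) your argument is correct in substance but takes a considerably longer route than the paper's one-line proof, and there is a small directional slip in the write-up.

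The paper simply conjugates the inclusion $N(my)\subseteq N(ny)$ by $y$ and invokes Lemma~\ref{lem basic properties of N(y)}(2) directly: since $(my)^{y^{-1}}=ym$, one has $yN(my)y^{-1}=N(ym)$ and likewise $yN(ny)y^{-1}=N(yn)$, so the two inclusions are literally conjugate to one another. Your approach instead passes through the identity $aN(y)=N(y)a^y$, reduces both sides to statements about the set $R_y=\{w\in N: N(y)w\subseteq N(y)\}$, and then argues $R_y$ is stable under $y$-conjugation. This works, but it unpacks and repacks exactly the same conjugation that the paper applies in one stroke; the auxiliary set $R_y$ buys nothing extra here.

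The slip: as you frame it, the hypothesis $N(my)\subseteq N(ny)$ becomes $(mn^{-1})^y\in R_y$ and the desired conclusion becomes $mn^{-1}\in R_y$, so what you actually need is closure of $R_y$ under conjugation by $y^{-1}$. What you then verify is closure under conjugation by $y$ (from $w\in R_y$ to $w^y\in R_y$), which gives the \emph{reverse} implication. Of course, since $yN(y)y^{-1}=N(y)$ is an equality, the same computation with $y^{-1}$ in place of $y$ yields closure in the other direction, and both implications follow; but you should say so explicitly rather than claim that closure under $y$-conjugation alone ``suffices''.
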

\begin{proof}
(1): \quad
The first inclusion is equivalent to $yN(my)y^{-1}\subseteq yN(ny)y^{-1}$,
which by Lemma \ref{lem basic properties of N(y)}(2) is just the second inclusion.
\medskip

\noindent
(2):\quad
Multiply the inclusion $N(my)\subseteq N(ny)$
on the right by elements of $N$ and use Lemma \ref{lem basic properties of N(y)}(1).
\end{proof}

We now show that $(N, \frakP_{y^*})$ is a partially preordered group, for any $y\in \Dt\sminus N$.

\begin{lemma}[\cite{RS}, Lemma 6.4]\label{lem frakP is a preorder relation}
For any $y\in \Dt\sminus N$, the relation $\frakP:= \frakP_{y^*}$
has the following properties
\begin{enumerate}
\item $\frakP$ is reflexive and transitive;

\item
$\frakP$ is invariant.
%
%
\end{enumerate}
\end{lemma}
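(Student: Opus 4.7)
The plan is a direct verification of each clause from the defining equivalence $m\,\frakP_{y^*}\,n \iff N(my)\subseteq N(ny)$, using the translation identities of Lemma \ref{lem basic properties of N(y)}(1) and the coset-invariance of Corollary \ref{cor on inclusion of N sets}(2) as the only real inputs.

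For part (1), both properties are essentially tautologies about set inclusion. Reflexivity $m\,\frakP\,m$ is the trivial inclusion $N(my)\subseteq N(my)$, and transitivity is just the transitivity of $\subseteq$: if $N(my)\subseteq N(ny)$ and $N(ny)\subseteq N(ky)$, then $N(my)\subseteq N(ky)$.

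For part (2), fix $s\in N$ and assume $N(my)\subseteq N(ny)$. I would handle the two sides of invariance separately. The left-invariance $sm\,\frakP\,sn$ is a direct calculation: by Lemma \ref{lem basic properties of N(y)}(1), $N(smy)=N(s\cdot(my))=s\,N(my)$ and likewise $N(sny)=s\,N(ny)$, and left multiplication by $s$ preserves set inclusion. The right-invariance $ms\,\frakP\,ns$ is slightly less direct, because the factor $s$ sits between $m$ and $y$ in $N(msy)$, so Lemma \ref{lem basic properties of N(y)}(1) does not immediately strip it off. The key observation is that, since $N\nsg\Dt$, we have $sy\in Ny=yN=y^*$, so Corollary \ref{cor on inclusion of N sets}(2) applied with $y'=sy$ converts $N(my)\subseteq N(ny)$ into $N(m\cdot sy)\subseteq N(n\cdot sy)$, which is precisely what is required.

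There is essentially no serious obstacle: the whole argument is routine once Lemma \ref{lem basic properties of N(y)} and Corollary \ref{cor on inclusion of N sets} are in place. The only mildly subtle point is recognizing that the right-invariance cannot be verified by a symmetric calculation to the left case and must instead be routed through the coset-invariance statement of Corollary \ref{cor on inclusion of N sets}(2), which itself relied on the right-translation identity $N(yn)=N(y)n$. Once this is noted, the proof is a few lines.
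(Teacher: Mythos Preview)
Your proof is correct and follows essentially the same approach as the paper's: part (1) is dismissed as immediate from set inclusion, and for part (2) left-invariance comes from Lemma~\ref{lem basic properties of N(y)}(1) while right-invariance is obtained by applying Corollary~\ref{cor on inclusion of N sets}(2) with $y'=sy$. Your commentary on why right-invariance requires the coset-invariance route matches the paper's reasoning exactly.
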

\begin{proof}
(1):\quad
This is immediate from the definition of $\frakP$.

(2):\quad
Recall from equation \eqref{eq invariant}
the notion of an invariant relation.  Let $m, n, s\in N,$ and assume that $m\,\frakP\,n$.
By the definition of $\frakP$ and Lemma \ref{lem basic properties of N(y)}(1),
$sm\,\frakP\, sn$.
Next  we have $N(my')\subseteq N(ny'),$ for all $y'\in Ny,$
by Corollary \ref{cor on inclusion of N sets}(2).
Taking  $y'=sy,$ we see that $N(msy)\subseteq N(nsy)$, i.e.,
$ms\,\frakP\, ns$.
%
%
\end{proof}

Since $(N,\frakP_{y^*})$ is a partially preordered group, Lemma \ref{lemma on pullbacks of POGs}
yields a partially ordered group $(\Gamma_{y^*},\leq_{y^*})$ and a group epimorphism
$\varphi_{y^*}\colon N\to\Gamma_{y^*}$ such that
%
\begin{equation}\label{eq meaning of le}
\varphi_{y^*}(m)\leq_{y^*}\varphi_{y^*}(n) \quad\iff \quad m\,\frakP_{y^*}\,n\quad\iff N(my)\subseteq N(ny),
\end{equation}
for all $m,n\in N$.
More concretely,
\[
\Gamma_{y^*}=N/U_{y^*},
\]
where $U_{y^*}$ is the normal subgroup
\[
U_{y^*}:=\{n\in N\mid n\,\frakP_{y^*}\,1\text{ and }1\,\frakP_{y^*}\,n\}=\{n\in N\mid N(ny)=N(y)\}
\]
of $N$, and
\[
\varphi_{y^*}\colon N\to N/U_{y^*},
\]
is the canonical homomorphism.

Next, we let
\[
\mathbb{P}_{y^*}=\{b\in Ny \mid 1\in N(b)\}.
\]
Note that it follows from Lemma \ref{lem basic properties of N(y)}(3) and Lemma \ref{lem basic properties of N(y)}(1) that
$\mathbb{P}_{y^*}\ne\emptyset$.

One has $b\in \mathbb{P}_{y^*}$ if and only if $yb^{-1}\in N$ and $yb^{-1}\in Nyb^{-1}-y=N-y$, or equivalently, $yb^{-1}\in N(y)$.
Therefore
\[
\mathbb{P}_{y^*}=N(y)^{-1}y,
\]
and similarly,
\[
\mathbb{P}_{y^*}=yN(y)^{-1}.
\]

\begin{lemma}\label{lem eq cond}
Let $y\in\Dt\sminus N$. The following conditions on $m,n\in N$ are equivalent:
\begin{enumerate}
\item[(1)]
$m\,\frakP_{y^*}\,n$.
\item[(2)]
$\gvp_{y^*}(m)\le_{y^*}\gvp_{y^*}(n)$.
\item[(3)]
$N(my')\subseteq N(ny')$  for all $y'\in Ny$.
\item[(4)]
$m\in N(nb)$ for all $b\in\pp_{y^*}$.
\item[(5)]
$nb+m\in N$ for all $b\in\pp_{y^*}$.
\item[(6)]
$n\pp_{y^*}\subseteq m\pp_{y^*}$.
\item[(7)]
For all $y'\in Ny$, if $n\in N(y')$, then $m\in N(y')$.
\end{enumerate}
\end{lemma}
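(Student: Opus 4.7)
The plan is to show all seven conditions equivalent by small direct computations, each amounting to unpacking the definitions of $N(\cdot)$ and $\pp_{y^*}$ and reducing to the defining inclusion $N(my)\subseteq N(ny)$ for $\frakP_{y^*}$. The only tools needed, beyond definitions, are the two identities $N(ay)=aN(y)$ and $N(yn)=N(y)n$ from Lemma \ref{lem basic properties of N(y)}(1) together with the side-swap recorded in Corollary \ref{cor on inclusion of N sets}. The entire hard part will be bookkeeping: because $N$ is typically non-abelian, one must take care to apply the left rule on left factors and the right rule on right factors, and to invoke Corollary \ref{cor on inclusion of N sets}(1) whenever an inclusion needs to be transported between an ``$Ny$''-form and a ``$yN$''-form.

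The equivalences (1)$\iff$(2) and (1)$\iff$(3) are essentially free: (1)$\iff$(2) is exactly how $\gvp_{y^*}$ and $\le_{y^*}$ were defined in equation \eqref{eq meaning of le}; the forward direction of (1)$\iff$(3) is Corollary \ref{cor on inclusion of N sets}(2) (recalling $Ny=yN$), while the reverse direction is the specialization $y'=y$.

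For (1)$\iff$(4)$\iff$(5)$\iff$(6) I would use the representation $\pp_{y^*}=yN(y)^{-1}$. A typical $b\in\pp_{y^*}$ has the form $b=yu^{-1}$ with $u\in N(y)$, and Lemma \ref{lem basic properties of N(y)}(1) gives $N(nb)=N(ny)u^{-1}=nN(y)u^{-1}$. Hence (4) is the statement that $mu\in nN(y)$ for every $u\in N(y)$, i.e.\ $mN(y)\subseteq nN(y)$, which is (1). The step (4)$\iff$(5) is the bare definition of $N(nb)$, using $m\in N$. For (5)$\iff$(6): given $b\in\pp_{y^*}$, the element $b':=m^{-1}nb$ lies in $Ny$, and one has $b'\in\pp_{y^*}$ iff $1+b'\in N$ iff $m+nb\in N$; this converts $n\pp_{y^*}\subseteq m\pp_{y^*}$ into condition (5).

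For (1)$\iff$(7) I would parametrize $y'\in Ny$ as $y'=ky$ with $k\in N$; then $N(y')=kN(y)$, so ``$n\in N(y')$'' becomes ``$k^{-1}n\in N(y)$'' and similarly for $m$. Setting $u:=k^{-1}n$ (so $k=nu^{-1}$), condition (7) becomes: $un^{-1}m\in N(y)$ for every $u\in N(y)$, i.e.\ $N(y)n^{-1}m\subseteq N(y)$. Two applications of Lemma \ref{lem basic properties of N(y)}(1) rewrite this as $N(yn^{-1}m)\subseteq N(y)$, and Corollary \ref{cor on inclusion of N sets}(1) transposes it to $N(n^{-1}my)=n^{-1}mN(y)\subseteq N(y)$, equivalently $mN(y)\subseteq nN(y)$, which is (1).
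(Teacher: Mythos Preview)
Your proof is correct and follows essentially the same approach as the paper: both arguments reduce everything to the basic identities $N(ay)=aN(y)$ and $N(ya)=N(y)a$ from Lemma~\ref{lem basic properties of N(y)}(1). The only organizational difference is that the paper runs a cycle $(1)\Rightarrow(3)\Rightarrow(4)\Rightarrow(1)$ and handles $(7)$ via $(3)\Rightarrow(7)\Rightarrow(1)$ with ad hoc substitutions, whereas you reduce each condition to $(1)$ by explicit parametrizations and invoke the side-swap Corollary~\ref{cor on inclusion of N sets}(1) for the $(1)\Leftrightarrow(7)$ step; the paper's route for $(7)$ avoids that corollary, but both are equally short.
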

\begin{proof}
To simplify notation we denote  $\pp:=\pp_{y^*}$.
\medskip

\noindent
(1)$\Leftrightarrow$(2): \quad
This is immediate from the definition of $\gvp_{y^*}$ and of $\le_{y^*}$.
\medskip

\noindent
(1)$\Rightarrow$(3): \quad
This is Corollary 3.2(2).
\medskip

\noindent
(3)$\Rightarrow$(4): \quad
Let $b\in\pp$.
As $\pp\subseteq Ny$ we have  $N(mb)\subseteq N(nb)$, by (3).
But $m\in N(mb)$, so $m\in N(nb)$
\medskip

\noindent
(4)$\Rightarrow$(1): \quad
We need to show that (4) implies $mN(y)\subseteq N(ny)$.
So take $s\in N(y)$.
Then $b:=ys^{-1}\in\pp$.
By (4), $m\in N(nb)=N(nys^{-1})$ whence $ms\in N(ny)$.
\medskip

\noindent
(4)$\Leftrightarrow$(5): \quad
As $m\in N$, this is immediate from the definition of $N(nb)$.
\medskip

\noindent
(4)$\Leftrightarrow$(6): \quad
Condition (4) is equivalent to $1\in N(m^{-1}nb)$ for all $b\in\pp$.
As $m^{-1}n\pp\subseteq Ny$, this means that $m^{-1}n\pp\subseteq\pp$, as desired.
\medskip

\noindent
(3)$\Rightarrow$(7): \quad
Let $y'\in Ny$.
By (3) (with $y'$ replaced by $n^{-1}y'$), $N(mn^{-1}y')\subseteq N(y')$.
Now if $n\in N(y')$, then $m\in N(mn^{-1}y')$, and it follows that $m\in N(y')$.

\medskip

\noindent
(7)$\Rightarrow$(1): \quad
Let $s\in N(my)$.
Then $1\in N(mys^{-1})$, so $n\in N(y')$, where $y':=mys^{-1}n\in Ny$.
By (7), $m\in N(y')$.
Thus $1\in N(ys^{-1}n)$ and therefore $n^{-1}s\in N(y)$.
We conclude that $s\in nN(y)=N(ny)$.
\end{proof}

To continue the discussion we recall some notation.
Given $y\in \Dt\sminus N$, we let
\[
N_{\le_{y^*}\,\gc}=
\{m\in N\mid \gvp_{y^*}(m) \le \gc\},\qquad\gc\in\gC_{y^*},
\]
and the sets $N_{<_{y^*}\, \gc}$, $N_{>_{y^*}\,\gc}$ etc.
are defined similarly.

\begin{cor}\label{cor properties lf le y}
Let $y\in \Dt\sminus N$.  Then
\begin{enumerate}
\item $N_{\le_{y^*}\, 0}=\{m\in N\mid m\in N(b),
\text{ for all }b\in\pp_{y^*}\}$;

\item for all $y'\in Ny$,
if $n\in N(y'), m\in N$ and $\gvp_{y^*}(m)\le_{y^*} \gvp_{y^*}(n)$, then $m\in N(y')$.
\end{enumerate}
\end{cor}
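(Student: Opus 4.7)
Both parts are direct specializations of Lemma \ref{lem eq cond}, so the plan is simply to identify the relevant equivalence in each case and spell out the book-keeping.

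For part (1), the plan is to take $n = 1$ in Lemma \ref{lem eq cond}. Since $\gvp_{y^*}(1) = 0$ in the additively written group $\gC_{y^*}$, the condition $m \in N_{\le_{y^*}\, 0}$ unfolds as $\gvp_{y^*}(m) \le_{y^*} \gvp_{y^*}(1) = 0$, which is the condition (2) of Lemma \ref{lem eq cond} with $n = 1$. Applying the equivalence (2)$\Leftrightarrow$(4) of that lemma then gives $m \in N(1 \cdot b) = N(b)$ for all $b \in \pp_{y^*}$, which is the desired characterization. The reverse direction is the same chain of equivalences read backwards.

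For part (2), the plan is to apply the implication (2)$\Rightarrow$(1)$\Rightarrow$(7) of Lemma \ref{lem eq cond}. The hypothesis $\gvp_{y^*}(m) \le_{y^*} \gvp_{y^*}(n)$ is condition (2); by (2)$\Leftrightarrow$(1) it is equivalent to $m\,\frakP_{y^*}\,n$, and then by (1)$\Rightarrow$(7) we get that for every $y' \in Ny$, if $n \in N(y')$ then $m \in N(y')$. Specializing to the given $y'$ yields the claim.

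There is essentially no obstacle here: Lemma \ref{lem eq cond} has already done all the work, and the only thing to verify is the correct translation of $\le_{y^*} 0$ as $\gvp_{y^*}(1)$ being an upper bound for $\gvp_{y^*}(m)$. Thus the corollary is a one-line consequence of the lemma in each case, and the writeup can be kept to a few sentences.
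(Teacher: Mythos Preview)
The proposal is correct and follows essentially the same approach as the paper: part (1) uses the equivalence (2)$\Leftrightarrow$(4) of Lemma~\ref{lem eq cond} with $n=1$, and part (2) uses the implication (2)$\Rightarrow$(7) of the same lemma. Your route through (1) in part (2) is an unnecessary detour, since the lemma already states (2)$\Leftrightarrow$(7) directly, but this is a purely cosmetic difference.
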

\begin{proof}
(1):\quad
This follows from the equivalence of (2) and (4) in Lemma \ref{lem eq cond}, with $n=1$.
\medskip

\noindent
(2):\quad
This follows from the implication (2)$\Rightarrow$(7) in Lemma \ref{lem eq cond}.
\end{proof}


The following proposition gives an
important and surprising property
of the sets $N(y),\ y\in\Dt\sminus N$.

\begin{prop}\label{prop N(a) basis}
Assume that $D$ is a finite-dimensional (but not necessarily central)
division algebra over an
infinite field $k$,
and let $N \subseteq D^{\times}$ be a subgroup of
finite index. Then for any $a \in D^{\times} \sminus N$, each of
the sets $N(a)$ and  $N(a)^{-1}:=\{n^{-1}\mid n\in N(a)\}$
contains a basis of $D$ over $k$.
\end{prop}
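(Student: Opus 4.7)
I will show that the $k$-linear span $V := \operatorname{span}_k N(a)$ equals $D$, from which the existence of a basis in $N(a)$ follows; the analogous statement for $N(a)^{-1}$ will follow by a parallel argument using the description $N(a)^{-1} = \{n \in N : na + 1 \in N\}$. I proceed by contradiction, assuming $V \subsetneq D$.

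The preliminary facts I rely on are: the subgroup $Q := k^\times \cap N$ has finite index in $k^\times$ (since $[k^\times : Q] \le [D^\times : N] < \infty$), so $Q$ is infinite as $k$ is; by the Brandis--Turnwald theorem $D = N - N$ (used in Lemma~\ref{lem basic properties of N(y)}(3)), $N$ spans $D$ over $k$, so I may pick $m \in N \setminus V$; and by Lemma~\ref{lem basic properties of N(y)}(3), $N(a) \neq \emptyset$, so I fix $n_0 \in N(a)$. For every $\lambda \in Q$, since $\lambda m \in N \setminus V$, we have $\lambda m \notin N(a)$, hence $a + \lambda m \in D^\times \setminus N$.

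The heart of the argument is a pigeonhole on cosets of $N$ in $D^\times$: the infinite family $\{a + \lambda m : \lambda \in Q\}$ is distributed among the finitely many non-identity cosets of $N$, so some such coset $cN$ (with $c \notin N$) is hit on an infinite subset $\Lambda \subseteq Q$. For distinct $\lambda_1, \lambda_2 \in \Lambda$, the ratio $(a + \lambda_1 m)(a + \lambda_2 m)^{-1} \in N$ expands as $1 + (\lambda_1 - \lambda_2)\, b$, where $b := m(a + \lambda_2 m)^{-1}$; hence the affine line $1 + k b$ meets $N$ infinitely often. If $b \notin V$, iterating the pigeonhole on the translated line $a + 1 + k b$ and hitting the identity coset yields $1 + \eta b \in N(a)$, which for $b \notin V$ lies outside $V$ for all but at most one exceptional $\eta$, giving the sought contradiction.

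If instead $b \in V$ for every choice of $\lambda_2 \in \Lambda$, then the Taylor expansion
\[
m(a + \lambda m)^{-1} = (m a^{-1})(1 + \lambda m a^{-1})^{-1} = \sum_{i \geq 0} (-\lambda)^i (m a^{-1})^{i+1},
\]
together with the fact that a rational map $k \to D$ whose image on an infinite subset of $k$ lies in a $k$-subspace must have all Taylor coefficients in that subspace, forces $(m a^{-1})^i \in V$ for every $i \geq 1$; thus the commutative subfield $k[m a^{-1}] \subseteq D$ lies in $k + V$. Varying $m$ over $N \setminus V$, this produces structural inclusions such as $N \setminus V \subseteq (V + k) a$; combining these with $N$ spanning $D$ and iterating must eventually contradict $V \subsetneq D$. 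The main obstacle I anticipate is the careful bookkeeping and termination of this nested pigeonhole argument: at each iteration one must control whether the emerging direction lies in $V$, and one must exploit the finiteness of $D^\times / N$ to force termination within at most $[D^\times : N]$ steps.
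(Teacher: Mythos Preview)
Your proposal is not a complete proof; it is a sketch with gaps that you yourself flag, and those gaps are real.

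The first problem is in your ``Case A'' ($b\notin V$). Pigeonholing on the line $a+1+kb$ only guarantees that \emph{some} coset of $N$ is hit infinitely often, not that the identity coset is hit. Each failed iteration replaces $b$ by a new direction $b'=b(a+1+\mu_2 b)^{-1}$ and restarts the dichotomy; nothing bounds the number of iterations by $[D^\times:N]$ or by $\dim_k D$, so the process need not terminate. The second problem is in ``Case B''. Your Taylor argument can be made rigorous: from $(1+\lambda u)f(\lambda)=u$ with $u=ma^{-1}$ one inductively shows $u^nf(\lambda)\in V$ and hence $u^{n+1}\in V$, so indeed $k[ma^{-1}]\subseteq V$ (and in particular $1\in V$ since $u$ is invertible). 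But the conclusion you draw, ``$N\setminus V\subseteq (V+k)a$'', only gives $N\subseteq V\cup Va$, hence $D=V+Va$, which does \emph{not} contradict $V\subsetneq D$. The promised ``iteration'' to reach a contradiction is never carried out.

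The paper avoids all of this with a short direct argument. One first produces a basis $y_1,\ldots,y_m$ of $D$ over $k$ lying in $N$ (a Zariski-density argument: $N$ is Zariski-dense in $\mathrm{GL}_{1,D}$, hence in $D$). Then one invokes Turnwald's proposition: for any finite list $x_1,\ldots,x_\ell\in D^\times$ there exists $c\in D^\times$ with $1+cx_i\in N$ for every $i$. Applying this simultaneously to all products $x_iy_j^{-1}$, where $x_1,\ldots,x_\ell$ is a left transversal of $N$ in $D^\times$, one finds $c$ with $1+cx_iy_j^{-1}\in N$ for all $i,j$; choosing $i_0$ so that $cx_{i_0}=as$ with $s\in N$ gives $y_js^{-1}\in N(a)$ for every $j$, and these form a basis. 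The result for $N(a)^{-1}$ follows by the same trick applied to the products $x_iy_j$. The key point you are missing is precisely this simultaneous version of the coset-hitting step, which Turnwald provides off the shelf and which replaces your uncontrolled iteration.
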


(Note that this proposition is valid without any additional assumptions on
$D^{\times}/N,$ in the case where $N$ is normal in $D$.)

\begin{proof}
We first recall the following Proposition due to Turnwald.

\begin{prop}[\cite{Tu}, Proposition 1.3]\label{prop tur}
Let $D$ be an
infinite division ring and let $N \subseteq D^{\times}$ be a subgroup of
finite index. Then for any $x_1, \ldots , x_m \in D^{\times}$ there
exists $c \in D^{\times}$ such that $1 + cx_j \in N$ for all $j = 1,
\ldots , m$.
\end{prop}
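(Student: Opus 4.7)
I would prove this by induction on $m$. For the base case $m=1$: since $D$ is infinite and $[\Dt : N]$ is finite, the subgroup $N$ is itself infinite, so I may pick $n \in N$ with $n \ne 1$ and set $c := (n-1)x_1^{-1}$; then $c \in \Dt$ and $1 + cx_1 = n \in N$. (This is essentially a reformulation of the fact $D = N-N$ already invoked in Lemma \ref{lem basic properties of N(y)}(3).)

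For the inductive step from $m-1$ to $m$, my plan is to attempt an additive perturbation. Given $c_0 \in \Dt$ with $1 + c_0 x_j \in N$ for $j < m$ (which exists by the induction hypothesis), set $n_j := 1 + c_0 x_j \in N$ for $j<m$ and $y := 1 + c_0 x_m$. If $y \in N$ we are done; otherwise I look for $c$ of the form $c = c_0 + t$ with $t \in D$. The required conditions then translate into $t \in A_j := (N - n_j)x_j^{-1}$ for $j < m$ and $t \in A_m := (N - y)x_m^{-1}$. Each $A_j$ is an additive translate of a multiplicative right coset of $N$; the first $m-1$ of them contain $0$, while $A_m$ does not (because $y \notin N$). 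The task reduces to showing $\bigcap_{j=1}^m A_j$ contains a nonzero element (which then automatically lies in $\Dt$).

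The main obstacle is showing this intersection is nonempty, since a bare induction on $m$ does not close cleanly: the perturbation step does not reduce the number of constraints placed on $t$. The natural fix is to strengthen the inductive hypothesis by asserting that the solution set at each stage is \emph{large}---for example, that it contains a translate of an infinite additive subset of $D$, or meets every additive coset of a suitable finite-index additive subgroup. Such a largeness statement would allow the extra constraint $t \in A_m$ at stage $m$ to be absorbed by the previously constructed solution set. The combinatorial input I expect to need for this is a Neumann-type fact for infinite division rings: an infinite division ring $D$ cannot be covered by finitely many additive translates of proper multiplicative cosets of a finite-index subgroup of $\Dt$. Once such a covering/largeness lemma is available, the induction closes and produces the desired $c$.
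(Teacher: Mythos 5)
The paper does not prove this proposition itself; it cites Turnwald \cite{Tu}, Proposition~1.3, and uses it as a black box. So there is no in-text proof to compare against, and the question is whether your blind argument closes. It does not: there is a genuine gap exactly where you flag one.

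Your base case is fine, and your translation of the inductive step into the membership conditions $t\in A_j$ is correct. But the ``Neumann-type covering lemma'' you appeal to --- that an infinite division ring cannot be covered by finitely many additive translates of proper multiplicative cosets of a finite-index subgroup --- is not an available black box, and it is not easier than the proposition: it essentially \emph{is} the proposition. Notice that with such a lemma in hand the perturbation/induction scaffolding becomes superfluous. Starting from $c_0=0$ one has $n_j=1$ for all $j$, so the conditions $1+cx_j\in N$ read $c\in A_j:=(N-1)x_j^{-1}$ directly; each $A_j$ contains $0$, and each complement $D\setminus A_j$ decomposes as $\{-x_j^{-1}\}$ together with finitely many sets of the form $c_kNx_j^{-1}-x_j^{-1}$ with $c_kN\neq N$. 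Applying your covering lemma once (with $\{0\}$ thrown in as an extra excluded point) would already produce a nonzero $c\in\bigcap_j A_j$ and finish the whole proof in one stroke, with no induction. Thus the induction contributes nothing, and the entire burden rests on the unproved covering lemma. That lemma is not Neumann's covering lemma (which concerns cosets of subgroups of a \emph{single} group, not additive translates of multiplicative cosets, two incompatible group structures on $D$), nor does it follow from $D=N-N$; indeed one also has to be careful that the sets $c_kNx_j^{-1}-x_j^{-1}$ are translates of cosets of the conjugate subgroups $x_jNx_j^{-1}$ rather than of $N$ itself, since $N$ is not assumed normal in this proposition. Until the covering statement is actually proved --- which is where Turnwald's combinatorial argument does its work --- the proof is incomplete.
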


We now continue with the proof of Proposition \ref{prop N(a) basis}.
Let $x_1, \ldots , x_{\ell}$ be a left transversal for $N$ in
$D^{\times}$.

We claim that there is a basis $y_1,\ldots , y_m$ of $D$ over $k$ contained in $N$.
Indeed, let $H = \mathrm{GL}_{1,D}$ be the algebraic group associated with $D$.
Then $H = \bigcup x_i\widebar{N}$, where $\widebar{N}$ denotes the Zariski-closure of $N$ in $H$.
Since $H$ is connected, we conclude that $H=\widebar{N}$.
Then $N$ is also Zariski-dense in $D$.
The existence of a required basis now follows from the fact that the
$m$-tuples $(y_1, \ldots , y_m) \in D^m$ that constitute a basis of
$D$ over $k$ form a Zariski-open subset.

Now Consider the finite set of elements
\begin{equation}\label{eq basis}
x_i y^{-1}_j \ \ \text{where} \ \ i = 1, \ldots , \ell; \ j = 1,
\ldots , m.
\end{equation}
By Proposition \ref{prop tur}, there exists $c \in
D^{\times}$ such that
\[
1 + c x_i y^{-1}_j \in N \ \ \text{for all} \ \ i, j.
\]
Since $x_1, \ldots , x_{\ell}$ is a left transversal for $N$ in $\Dt,$ there exists
$i_0 \in \{1, \ldots , \ell\}$ and $s\in N,$ such that $c^{-1}a=x_{i_0}s^{-1}$,
that is $cx_{i_0} = as$.  Then
\[
1 + as y^{-1}_j \in N \ \ \text{for all} \ \ j = 1, \ldots , m.
\]
It follows that the elements $y_1 s^{-1}, \ldots , y_m s^{-1}$,
which form a basis of $D$ over $k$, are all contained in $N(a)$. To
show that $N(a)^{-1}$ contains a basis, we apply the same argument
to the family
\[
x_i y_j \ \ \text{where} \ \ i = 1, \ldots , \ell; \ j = 1,
\ldots , m
\]
in place of the family in equation \eqref{eq basis},
to obtain that $sy_1,\ldots,sy_m$ is a basis of $D$ over $k$ contained in  $N(a)^{-1}$.
\end{proof}

\section{The case where ${\rm diam}(\gD)\ge 3$}\label{sect diam 3}
In this section we continue the notation and hypotheses of \S\ref{sect the ordered gp}.
In addition we assume that $-1\in N$ and that $\Dt/N$ supports a V-graph $\gD$.
We write $\gD$ also for the vertex set of $\gD,$ i.e., for the
non-identity elements of $\Dt/N$. The purpose of this section is to
prove Theorem B(1) of the Introduction:
%
%
\begin{thm}\label{thm existence of s-level in diam 3}
Assume that there are elements $x^*, y^*\in\gD$ such that $d(x^*, y^*)\ge 3$.
Then
the map $\gvp_{y^*}\colon N\to \gC_{y^*}$ is a strongly leveled map.
\end{thm}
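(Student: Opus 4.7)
The plan is to construct an explicit $\xi \in N$ and set $\alpha := \gvp_{y^*}(\xi)$, which will serve as the s-level. I begin by translating the s-level conditions into concrete sum conditions: by Corollary \ref{cor properties lf le y}(1), $N_{\le 0} = \bigcap_{b \in \pp_{y^*}} N(b)$, so ``$1 \pm n \in N_{\le 0}$'' unfolds to ``$m_b \pm n \in N$ for every $b \in \pp_{y^*}$'', where $m_b := 1 + b \in N$. Likewise, the equivalence of (2) and (4) in Lemma \ref{lem eq cond} (applied with $m = 1$, $n = \xi$) turns $\alpha \ge 0$ into ``$1 + \xi b \in N$ for every $b \in \pp_{y^*}$''. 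So the entire task reduces to exhibiting $\xi$ and controlling which $n \in N$ satisfy these sum conditions against the whole family $\pp_{y^*}$.

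The distance hypothesis enters geometrically. By axioms (V2) and (V3) (equivalently (V$3''$), cf.\ Remark~(3) after Definition \ref{def V-graph}), each ball $B_1(c^*) := \{a^* \mid d(a^*, c^*) \le 1\} \cup \{1^*\}$ is a subgroup of $\Dt/N$, and $d(x^*, y^*) \ge 3$ yields the crucial disjointness $B_1(x^*) \cap B_1(y^*) = \{1^*\}$. Thus any coset in $\Dt/N$ forced into both balls must equal $1^*$; this dichotomy is what converts V-graph adjacency constraints into membership-in-$N$ conclusions.

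To build $\xi$, I pick $t \in N(x)$ (nonempty by Lemma \ref{lem basic properties of N(y)}(3)), so that $r := x + t \in N$, and take $\xi$ to be $t$ itself, or a modest modification such as $t \cdot s^{-1}$ for some $s \in N(y)$, arranged so that $\xi \in N$, $\gvp_{y^*}(\xi) \ge 0$, and $N_{>\alpha} \ne \emptyset$. Nonnegativity is checked by verifying ``$1 + \xi b \in N$ for all $b \in \pp_{y^*}$'' via (V$1'$) applied to the identity $1 = (1 + \xi b) + (-\xi b)$ (both cosets are nontrivial because $(\xi b)^* = y^* \ne 1^*$), which places $(1 + \xi b)^*$ in $B_1(y^*)$; a parallel application using the $x$-identity $r = t + x$ places it in $B_1(x^*)$, forcing $(1 + \xi b)^* = 1^*$. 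Nonemptiness of $N_{>\alpha}$ is arranged by enlarging $\xi$ multiplicatively using additional factors from $N(x)$, which strictly increase the $\gvp_{y^*}$-value.

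The main and hardest step is verifying ``$m_b \pm n \in N$'' for all $b \in \pp_{y^*}$ and all $n$ with $\gvp_{y^*}(n) > \alpha$. The approach is: assuming $m_b \pm n \notin N$, write $1 = m_b (m_b \pm n)^{-1} \pm n (m_b \pm n)^{-1}$ and apply (V$1'$) to conclude that $(m_b \pm n)^*$ is adjacent in $\gD$ to the coset of $n(m_b \pm n)^{-1}$; the factor $m_b$ (coming from $b \in \pp_{y^*}$) pins this coset into $B_1(y^*)$. The condition $\gvp_{y^*}(n) > \alpha$, unpacked via Lemma \ref{lem eq cond} and the $x$-content of $\xi$, further forces the same coset into $B_1(x^*)$; the trivial intersection then yields $m_b \pm n \in N$ as required. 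The main obstacle is the precise algebraic unfolding of ``$\gvp_{y^*}(n) > \alpha$'' into an adjacency within $B_1(x^*)$; this is expected to require iterating the V-graph axioms together with a careful choice of $\xi$, and is the core technical content of Theorem \ref{thm existence of s-level in diam 3}.
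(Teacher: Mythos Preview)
Your high-level plan---reduce the (SL) condition to sum conditions via Corollary~\ref{cor properties lf le y}(1) and Lemma~\ref{lem eq cond}, then exploit that $d(x^*,y^*)\ge 3$ forces $B_1(x^*)\cap B_1(y^*)=\{1^*\}$---is the right intuition.  But the concrete steps you propose do not carry it through.

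The central difficulty is that your (V$1'$) applications collapse.  In the ``main step'' you decompose $1=m_b(m_b\pm n)^{-1}\pm n(m_b\pm n)^{-1}$ and invoke (V$1'$).  Since $m_b,n\in N$, both summands lie in the \emph{same} coset $((m_b\pm n)^{-1})^*$, so (V$1'$) only tells you a coset is adjacent to itself---no information.  The same problem undermines your nonnegativity check: you get $(1+\xi b)^*\in B_1(y^*)$ correctly, but the ``parallel application using $r=t+x$'' that is supposed to force $(1+\xi b)^*\in B_1(x^*)$ is never made precise, and there is no evident way to extract an $x$-adjacency for $1+tb$ from the single relation $x+t\in N$.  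Likewise, the claim that multiplying by further factors from $N(x)$ strictly increases $\gvp_{y^*}$ is unsupported; nothing in the setup gives you strict inequalities in $\Gamma_{y^*}$ from membership in $N(x)$.

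What actually replaces these failed (V$1'$) moves is the additive identity $N(a+b)=N(a)\cap N(b)$ whenever $d(a^*,b^*)\ge 3$ (Lemma~\ref{lem N(x+y)}(2)), together with its multiplicative companion Lemma~\ref{lem N(ab)}.  From these one proves Proposition~\ref{prop Px vs Py}, whose part~(5) gives the crucial closure $(N(a)\cap N(b))\pm n\subseteq N(a)\cap N(b)$ for any $n$ with $n^{-1}\in N(a^{-1}b^{-1})$; this is what produces $1\pm m\in N$ and then $1\pm m\in N(c)$ for all $c$ in the relevant $\pp$-set (part~(6)).  The s-level is $\alpha=\gvp(n)$ for this specific $n$, and nonnegativity plus $N_{>\alpha}\ne\emptyset$ come from Corollary~\ref{cor N(a inverse) negative}, which in turn rests on Proposition~\ref{prop Px vs Py}(2)--(4).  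A further wrinkle you miss: the argument naturally proves the (SL) property for $\gvp_{z^*}$ with $z=x^{-1}y^{-1}$ (because part~(6) lands in $N(c)$ for $c\in\pp_{(x^{-1}y^{-1})^*}$), and one then substitutes $x^{-1},\,y^{-1}x$ for $x,y$ to recover the statement for $\gvp_{y^*}$.
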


\noindent
We start with

\begin{remarks}\label{rem a-b in N}
(1) \quad
Note that in Theorem B of the introduction we are assuming that $-1\in N$.
Hence for all $a\in \Dt$, $(-a)^*=a^*$.  Also,
by property (V1),
for every $x,y\in D^\times\sminus N$ and $n\in N$,
if $x+y\in N$ or $x-y\in N$, then $d(x^*,y^*)\le 1$.
Similarly, if $n\notin N(y)$, then $d(y^*,(y+n)^*)\le 1$.
We use these facts without further reference.
\medskip

(2)\quad
Axiom (V3), in conjunction with axiom (V2), also implies:

\medskip

($*$) \quad
If  $d(a^*,c^*)\leq 1$ and $d(b^*a^*,c^*)\le 1$, then $d(b^*,c^*) \le 1$,

\medskip
\noindent
for all $a,b,c\in D^\times\sminus N$ such that $ba\not\in N$.
Indeed,  $d((a^{-1})^*,c^*)\le1$ and $d((a^{-1})^*(b^{-1})^*,c^*)\le1$,
so by (V3), $d((b^{-1})^*,c^*)\le1$, whence $d(b^*,c^*)\le1$.

\medskip
(3) \quad
Axiom (V3) and ($*$) have the following immediate consequence:

\medskip

(V$3'$) \quad
If  $d(a^* , (ab)^*) \le 2$ or $d(a^* , (ba)^*) \le 2$, then  $d(a^* , b^*) \le 2$,

\medskip
\noindent
for all $a,b\in D^\times\sminus N$ such that $ab \notin N$.
In fact, in this section, as well as in sections 5 and 6 (which are based on the results of this section)
we will need axiom (V3) only in this weaker form (V3$'$).
The full strength of axiom (V3) will be needed only in section 7.
\medskip

(4)\quad
Notice that for $a, b\in D\sminus N,$ if $a^*\ne b^*\ne (a^*)^{-1},$ then $d(a^*,b^*)=d((a^*)^{-1}, b^*)$.
Indeed this follows from axiom (V2).

\end{remarks}

The next two lemmas list some  advanced and useful properties
of the sets $N(x)$. We mention that Lemma \ref{lem N(ab)}(1) will be
used only in the next section.

\begin{lemma}[\cite{RS}, Lemma 6.8]\label{lem N(x+y)}
Let $x,y\in \Dt\sminus N,$ and assume that $d(x^*,y^*)\ge 3,$  then,
\begin{enumerate}
\item
$x+y\notin N;$

\item
$N(x+y)=N(x)\cap N(y)$;

\item if $d((x+y)^*,x^*)\ge 3$, then
$N(x+y)=N(y)\subseteq N(x)\cap N(-x)$.
\end{enumerate}
\end{lemma}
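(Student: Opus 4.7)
The plan is to derive all three parts from axiom (V1) combined with the triangle inequality on $\gD$, following the strategy of \cite{RS}. Throughout I will use the convenient version of (V1) recorded in Remarks \ref{rem a-b in N}(1): for $u, v \in \Dt \sminus N$, if $u \pm v \in N$ then $d(u^*, v^*) \le 1$ (the sign version uses $-1 \in N$).

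Part (1) is immediate: if $x + y \in N$ then the above gives $d(x^*, y^*) \le 1$, contradicting $d(x^*, y^*) \ge 3$.

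For part (2) I will argue both inclusions by contradiction. For $\subseteq$, suppose $n \in N(x+y)$ but $n \notin N(y)$, so that $y + n \notin N$. Then the identities $y - (y+n) = -n \in N$ and $x + (y+n) = x+y+n \in N$ yield, via (V1), the two estimates $d(y^*, (y+n)^*) \le 1$ and $d(x^*, (y+n)^*) \le 1$, forcing $d(x^*, y^*) \le 2$ by the triangle inequality; the case $n \notin N(x)$ is symmetric. For $\supseteq$, let $n \in N(x) \cap N(y)$, set $m_1 = x + n \in N$ and $m_2 = y + n \in N$, and suppose $x + y + n \notin N$. Then the identities $(x+y+n) - y = m_1 \in N$ and $(x+y+n) - x = m_2 \in N$ give, again via (V1), $d((x+y+n)^*, y^*) \le 1$ and $d((x+y+n)^*, x^*) \le 1$, once more forcing $d(x^*, y^*) \le 2$.

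Part (3) is then a bootstrap of part (2) applied to a well-chosen pair. Since $-1 \in N$, one has $(-x)^* = x^*$, so the hypothesis $d((x+y)^*, x^*) \ge 3$ becomes $d((-x)^*, (x+y)^*) \ge 3$. The pair $(-x, x+y)$ therefore meets the hypothesis of part (2), and its sum is $y$, so part (2) delivers $N(y) = N(-x) \cap N(x+y)$. This yields both $N(y) \subseteq N(-x)$ and $N(y) \subseteq N(x+y)$; combining with $N(x+y) \subseteq N(y)$ from part (2) applied to the original pair $(x,y)$ gives $N(x+y) = N(y) \subseteq N(x) \cap N(-x)$. I do not anticipate any serious obstacle: the only nontrivial bookkeeping sits in part (2), and part (3) is a symmetric reuse of it.
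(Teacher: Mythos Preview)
Your proof is correct and follows essentially the same approach as the paper. Both arguments use (V1) with the triangle inequality for part (2), and both deduce part (3) by applying part (2) a second time to the pair $(-x, x+y)$ (the paper writes this as $N(y) = N(x+y-x) = N(x+y)\cap N(-x)$); you simply spell out the final combination of inclusions more explicitly.
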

\begin{proof}
The proof of this lemma uses only property (V1) of $\gD$.
Part (1) follows from Remark \ref{rem a-b in N}(1).
\medskip

\noindent
(2):\quad
Let $z:=x+y$
and let $n\in N(z)$.
Suppose, say, that $n\notin N(x)$.
We have
\[
(x+n)+y=z+n\in N.
\]
By Remark \ref{rem a-b in N}(1), $d(x^*, (x+n)^*)\le 1\ge d((x+n)^*,y^*)$;
thus $d(x^*, y^*)\le 2$, a contradiction.
This shows that $N(z)\subseteq N(x)$.
Similarly, $N(z)\subseteq N(y)$.

It remains to show that $N(x)\cap N(y)\subseteq N(z)$.
Assume false and
let $n\in (N(x)\cap N(y))\sminus N(z)$.
Then $(z+n)-x=(x+y+n)-x=y+n\in N$.  Similarly, $(z+n)-y\in N$,
so by Remark \ref{rem a-b in N}(1),
$x^*,(z+n)^*,y^*$ is a path in $\Delta$, a contradiction.
\medskip

\noindent
(3):\quad
This follows from (2), since by (2), also
\[
N(y)=N(x+y-x)=N(x+y)\cap N(-x).
\qedhere
\]
\end{proof}

\begin{lemma}[\cite{RS}, Lemma 6.9]\label{lem N(ab)}
Let $a,b\in \Dt\sminus N$ and $\gre\in \{1,-1\}$.  Then,
\begin{itemize}
   \item[(1)]  if $d(a^*,b^*)\ge 4$ and $\gre\notin N(b)$, then
               $N(ab)\cup N(ba)\subseteq N(a)\cap N(-a)$;
   \item[(2)]  if $d(a^*,b^*)\ge 3$ and $\gre\in N(b^{-1})$, then
               $N(ab)\cup N(ba)\subseteq N(a)\cap N(-a)$;
   \item[(3)]  if $d(a^*,b^*)\ge 3$ and $\gre\in N(a)$, then
               $N(b)\subseteq N(ab)\cap N(ba)$.
\end{itemize}
\end{lemma}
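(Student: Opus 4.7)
The plan for all three parts is the same contradiction strategy via axiom (V3$'$). In each case we suppose the claimed inclusion fails and exhibit an auxiliary vertex $z^*\in\gD$ satisfying $d(z^*,a^*)\le 1$ and $d(z^*,(ab)^*)\le 1$ (with the analogous statement for $(ba)^*$). This gives $d(a^*,(ab)^*)\le 2$, whence (V3$'$) forces $d(a^*,b^*)\le 2$, contradicting the standing lower bound on $d(a^*,b^*)$. Throughout we use that $ab,ba\notin N$, which follows from Remark \ref{rem a-b in N}(4) because $ab\in N$ would give $b^*=(a^*)^{-1}$ and hence $d(a^*,b^*)=0$; the basic properties of $N(\cdot)$ from Lemma \ref{lem basic properties of N(y)} are invoked freely.

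For part (3), fix $n\in N(b)$ and suppose $ab+n\notin N$; set $z:=ab+n$. The identity $z-ab=n\in N$ gives $d(z^*,(ab)^*)\le 1$ via (V1). The key algebraic identity
\[
ab+n \;=\; a(b+n) + (1-a)n
\]
combined with $b+n\in N$ places $a(b+n)$ in the coset $a^*$, so the argument closes provided $(1-a)n\in N$. This is immediate when $\gre=-1$, since $1-a=-(a-1)\in -N=N$; then (V1) yields $d(z^*,a^*)\le 1$ and the contradiction follows. The case $\gre=1$ is reduced to $\gre=-1$ via the substitution $a\mapsto -a$: one has $(-a)^*=a^*$, the distance $d((-a)^*,b^*)$ is unchanged, and $-1\in N(-a)$ since $-a-1=-(a+1)\in N$; the resulting conclusion is translated back via the identity $N(-x)=-N(x)$ together with the parallel substitution $b\mapsto -b$. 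The symmetric inclusion $N(b)\subseteq N(ba)$ is handled with the dual identity $ba+n=(b+n)a+n(1-a)$.

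For parts (1) and (2), given $n\in N(ab)$ one supposes $a+n\notin N$ and sets $z:=a+n$. The membership $ab+n\in N$ together with the extra assumption on $\gre$ and $b$, namely $b+\gre\notin N$ in (1) (so $(b+\gre)^*\in\gD$ is adjacent to $b^*$ via (V1)) or $b^{-1}+\gre\in N$ in (2) (where axiom (V2) relates the $b$- and $b^{-1}$-sides), allows one to assemble a chain of (V1)-adjacencies from $z^*$ back to $b^*$. In (1) this chain has length $3$, matching the sharper bound $d(a^*,b^*)\ge 4$; in (2) the extra information from $\gre\in N(b^{-1})$ together with (V2) shortens it to length $2$, so the weaker bound $d(a^*,b^*)\ge 3$ already suffices. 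The inclusions involving $ba$ are obtained by swapping sides in these identities, and the halves of the conclusions involving $N(-a)$ follow by the same argument with $a$ replaced by $-a$, using $(-a)^*=a^*$ and the unchanged distance $d(a^*,b^*)$.

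The principal obstacle is the $\gre=1$ subcase of part (3): the direct identity above supplies $(1-a)n\in N$ only when $\gre=-1$, and in characteristic different from $2$ there is no parallel clean identity for $\gre=1$. The passage to the pair $(-a,b)$ (and, where needed, further to $(-a,-b)$), together with the careful bookkeeping of how negation acts on the sets $N(x)$ via $N(-x)=-N(x)$, is therefore essential to reducing to the already-solved case. A related but milder calibration in parts (1) and (2) is verifying how many (V1)-edges actually chain between $z^*$ and $b^*$, which is what pins down whether the sharp hypothesis is $d(a^*,b^*)\ge 3$ or $\ge 4$.
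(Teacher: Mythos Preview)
Your sign-flip reductions do not close. The core problem is in part~(3), case $\gre=1$. You correctly prove, for $-1\in N(a')$, that $N(b')\subseteq N(a'b')$. Applying this with $a'=-a$ (so that $-1\in N(-a)$ is equivalent to $1\in N(a)$) and any choice of $b'$ gives only $N(b)\subseteq N(-ab)$ or $N(-b)\subseteq N(ab)$; translating via $N(-x)=-N(x)$ yields $-N(b)\subseteq N(ab)$, which is \emph{not} $N(b)\subseteq N(ab)$. The sets $N(ab)$ and $-N(ab)=N(-ab)$ need not coincide, so no amount of bookkeeping with $b\mapsto -b$ recovers the desired inclusion. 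The same defect undermines your plan for the $N(-a)$ halves of (1) and (2): substituting $a\mapsto-a$ turns the conclusion $N(ab)\subseteq N(a)$ into $N(-ab)\subseteq N(-a)$, which after multiplying by $-1$ is \emph{the same} statement, not the missing half.

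The paper avoids this trap by not arguing elementwise. For (1) and (2) it first shows $b+\gre\notin N$ with $d(a^*,(b+\gre)^*)\ge 3$, then writes $a(b+\gre)=\gre a+ab$ and invokes Lemma~\ref{lem N(x+y)}(3) with $x=\gre a$, $y=ab$; that lemma delivers $N(ab)\subseteq N(\gre a)\cap N(-\gre a)=N(a)\cap N(-a)$ in one stroke, symmetrically in the sign. Part~(3) is then \emph{reduced to} part~(2) by taking $(ab,a^{-1})$ in place of $(a,b)$: one checks $d((ab)^*,(a^{-1})^*)\ge 3$ via (V2) and (V3$'$), and the hypothesis $\gre\in N(a)=N((a^{-1})^{-1})$ is exactly what (2) requires, yielding $N(b)=N(a^{-1}\cdot ab)\subseteq N(ab)$. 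Your direct identity $ab+n=a(b+n)+(1-a)n$ is fine for $\gre=-1$, but there is no companion identity for $\gre=1$ that lands $z-a m$ in $N$; the route through Lemma~\ref{lem N(x+y)} (or the reduction of (3) to (2)) is what makes both signs work.
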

\begin{proof}
(1) \& (2):\quad
We first claim that under each of the hypothesis of (1) and the hypothesis of (2)
we have $b+\gre\notin N$ and $d(a^*,(b+\gre)^*)\geq3$.

Indeed, in (1) we assume $d(a^*,b^*)\geq4$ and $b+\gre\notin N$.
Since $d(b^*,(b+\gre)^*)\leq1$ we get $d(a^*,(b+\gre)^*)\geq3$.

In (2) we assume $d(a^*,b^*)\geq3$ and $\gre\in N(b^{-1})$.
By Lemma  \ref{lem basic properties of N(y)}(4), $b+\gre\notin N$ and $(b+\gre)^*=b^*$,
so again $d(a^*,(b+\gre)^*)\geq3$, proving our claim.

From the claim and (V3$'$) we deduce that $ab+\gre a\notin N$ and $d(a^*,(ab+\gre a)^*)\geq3$.
Also, $d(a^*,(ab)^*)\geq3$, by (V3$'$) again.
By Lemma \ref{lem N(x+y)}(3)  (with $\gre a,ab$ in place of $x,y$, respectively),
$N(ab)\subseteq N(\gre a)\cap N(-\gre a)=N(a)\cap N(-a)$.
For the other inclusion conjugate by $a$ using Lemma \ref{lem basic properties of N(y)}(2).

\medskip

\noindent
(3):\quad
Since $d(a^*,b^*)\ge 3$, (V2) and (V3$'$) imply that $d((ab)^*,(a^{-1})^*)\ge 3$,
and it follows from (2) (taking $ab,a^{-1}$ in place of $a,b$, respectively),
that $N(a^{-1}ab)\subseteq N(ab)$, that is $N(b)\subseteq N(ab)$.
The other inclusion is obtained by conjugating by $b$.
\end{proof}
%
%

The next two results are based on \cite{RSS}, Propositions 5.2 and 5.3.
%
%

\begin{prop}\label{prop Px vs Py}
Let $x,y\in D^\times\sminus N$ satisfy $d(x^*,y^*)\geq3$.
Let  $a\in \mathbb{P}_{x^*}$ and $b\in \mathbb{P}_{y^*}$.
Then:
\begin{enumerate}
\item[(1)]
$a+1\in U_{y^*},$ and hence $a+1\in N(b)$;
\item[(2)]
$N(a^{-1})\subseteq N(b)$;
\item[(3)]
$N(a^{-1}b^{-1})\subseteq N(\epsilon a^{-1})\cap N(\epsilon b^{-1})$ for $\epsilon=\pm1.$
\end{enumerate}
Further, let  $n\in N$ satisfy $n^{-1}\in N(a^{-1}b^{-1})$.
Then \begin{enumerate}
\item[(4)]
$N(nb)\neq N(b)$;
\item[(5)]
$(N(a)\cap N(b))\pm n\subseteq N(a)\cap N(b)$;
\item[(6)]
$1\pm n\in N(c)$ for every $c\in\mathbb{P}_{(x^{-1}y^{-1})^*}$.
\end{enumerate}
\end{prop}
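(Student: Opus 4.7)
The plan is to establish the six parts in order, using throughout that $d(a^*,b^*)=d(x^*,y^*)\ge 3$.

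\textbf{Part (1).} I will show $N((a+1)y)=N(y)$, which by Lemma \ref{lem basic properties of N(y)}(1) (with $n=a+1\in N$) is equivalent to $a+1\in U_{y^*}$. Axiom (V3$'$) (Remark \ref{rem a-b in N}(3)) gives $d((ay)^*,y^*)\ge 3$, so Lemma \ref{lem N(x+y)}(2) applied to the pair $(ay,y)$ yields $N((a+1)y)=N(ay+y)=N(ay)\cap N(y)$, and Lemma \ref{lem N(ab)}(3) with $\gre=1\in N(a)$ gives $N(y)\subseteq N(ay)$, collapsing the intersection to $N(y)$. The second assertion then follows from $\gvp_{y^*}(a+1)=0$ together with Corollary \ref{cor properties lf le y}(2) applied with $y'=b$, $n=1\in N(b)$, $m=a+1$.

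\textbf{Parts (2) and (3).} Each follows by chaining two applications of Lemma \ref{lem N(ab)}, with Remark \ref{rem a-b in N}(4) transferring the bound $\ge 3$ between $d(a^*,b^*)$ and the distances involving $a^{-1},b^{-1}$. For (2): I apply Lemma \ref{lem N(ab)}(3) with $(b,a^{-1})$ in place of $(a,b)$ and $\gre=1\in N(b)$ to get $N(a^{-1})\subseteq N(ba^{-1})$; then Lemma \ref{lem N(ab)}(2) with the same pair and $\gre=1\in N((a^{-1})^{-1})=N(a)$ to get $N(ba^{-1})\subseteq N(b)$; chain the inclusions. For (3): I apply Lemma \ref{lem N(ab)}(2) with $(a^{-1},b^{-1})$ in place of $(a,b)$ and $\gre=1\in N(b)$ to get $N(a^{-1}b^{-1})\subseteq N(\gre a^{-1})$ for $\gre=\pm 1$; swapping the roles of $a$ and $b$ (with $\gre=1\in N(a)$) yields the analogous inclusion for $N(\gre b^{-1})$.

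\textbf{Parts (4), (5), (6).} These invoke the extra hypothesis $n^{-1}\in N(a^{-1}b^{-1})$ on top of (1)-(3). A crucial preliminary observation is that $c:=a^{-1}b^{-1}n$ lies in $\mathbb{P}_{(x^{-1}y^{-1})^*}$, since $(a^{-1}b^{-1}+n^{-1})n=c+1\in N$; moreover (V3$'$) and Remark \ref{rem a-b in N}(4) make the three cosets $x^*,\ y^*,\ (x^{-1}y^{-1})^*$ pairwise at distance $\ge 3$ in $\gD$, so parts (1)-(3) apply to any pair among them. For (4) I will argue by contradiction: if $N(nb)=N(b)$, Lemma \ref{lem basic properties of N(y)}(1) together with normality of $U_{y^*}$ forces $n\in U_{y^*}$; part (1) applied with $(x,y,a,b)$ replaced by $(x^{-1}y^{-1},y,c,b)$ then gives $c+1\in U_{y^*}$, and the identity $c+1=(a^{-1}b^{-1}+n^{-1})n$ forces $a^{-1}b^{-1}+n^{-1}\in U_{y^*}$; combined with part (2) in the same triple (yielding $N(c^{-1})=N(n^{-1}ba)\subseteq N(b)$), this will contradict $n\in U_{y^*}$. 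For (5), direct computation starting from $a^{-1}b^{-1}+n^{-1}\in N$, multiplied on appropriate sides by $a,b$ and by $m\in N(a)\cap N(b)$, verifies $a+(m\pm n),\ b+(m\pm n)\in N$. Part (6) is then obtained by specializing (5) to $m=1\in N(a)\cap N(b)$ and running the analogous argument in the triple $(x^{-1}y^{-1},y)$, concluding $1\pm n\in N(c)$ for every $c\in\mathbb{P}_{(x^{-1}y^{-1})^*}$. The hardest step is (4): extracting the rigidity $n\notin U_{y^*}$ from the purely additive relation $a^{-1}b^{-1}+n^{-1}\in N$ requires a careful use of the symmetric role of the three cosets and the distinguished element $c$; parts (5) and (6) are computationally demanding but follow the same coset-theoretic template once (1)-(3) and the observation about $c$ are in hand.
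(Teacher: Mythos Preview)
Your arguments for parts (1)--(3) are correct and essentially match the paper's, with harmless variations (e.g., in (1) you use Lemma~\ref{lem N(x+y)}(2) together with Lemma~\ref{lem N(ab)}(3), whereas the paper applies Lemma~\ref{lem N(x+y)}(3) directly; both work).

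For part (4), however, your route is needlessly elaborate and the contradiction is never actually closed. You assume $N(nb)=N(b)$, deduce $n\in U_{y^*}$, introduce $c=a^{-1}b^{-1}n$, invoke (1) and (2) for the triple $(x^{-1}y^{-1},y)$, and then assert ``this will contradict $n\in U_{y^*}$''---but you never say what the contradiction is, and the pieces you have assembled ($c+1\in U_{y^*}$, $a^{-1}b^{-1}+n^{-1}\in U_{y^*}$, $N(n^{-1}ba)\subseteq N(b)$) do not by themselves yield one. The paper's proof is three lines: since $1\in N(b)$ one has $n\in N(nb)$; by (3), $n^{-1}\in N(b^{-1})$, so Lemma~\ref{lem basic properties of N(y)}(4) gives $n\notin N(b)$; hence $N(nb)\neq N(b)$. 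No contradiction argument, no auxiliary element $c$.

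Part (5) has a genuine gap. Your plan is to obtain $a+(m\pm n)\in N$ and $b+(m\pm n)\in N$ by ``direct computation starting from $a^{-1}b^{-1}+n^{-1}\in N$, multiplied on appropriate sides''. This does not work: the single relation $a^{-1}b^{-1}+n^{-1}\in N$, even combined with $a+m,\ b+m\in N$, is not enough to force $a+m\pm n\in N$ by ring manipulations. The paper's key idea---which you are missing---is the decomposition $(a+b)=(a+\epsilon n)+(b-\epsilon n)$ together with Lemma~\ref{lem N(x+y)}(2): since $\pm n^{-1}\in N(a^{-1})\cap N(b^{-1})$ by (3), one has $(a\pm n)^*=a^*$ and $(b\pm n)^*=b^*$, so $N(a+b)=N(a+\epsilon n)\cap N(b-\epsilon n)$ for $\epsilon=\pm1$; this is what puts $m\in N(a+\epsilon n)\cap N(b+\epsilon n)$. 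You also omit the second half of the argument: once $a+(m+\epsilon n),\,b+(m+\epsilon n)\in N$, one must still rule out $m+\epsilon n\notin N$ (otherwise $a^*,(m+\epsilon n)^*,b^*$ would be a path of length $2$, contradicting $d(a^*,b^*)\geq 3$). Your sketch for (6) inherits these problems and is too vague to stand on its own; the paper instead applies (1) (in the triple $(x^{-1}y^{-1},x)$ and $(x^{-1}y^{-1},y)$) to get $c+1\in N(a)\cap N(b)$, and then applies (5) with $m=c+1$.
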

\begin{proof}
(1):\quad
First, $a+1\in N,$ because $1\in N(a)$.
By definition, to show that $a+1\in U_{y^*},$
it remains to show that $N(ay+y)=N(y)$.

By (V3$'$) and Remark \ref{rem a-b in N}(2), $d((ay)^*,y^*)=d(x^*y^*,y^*)\geq3$.
Now $(ay+y)^*=y^*,$ since $a+1\in N,$ so $d((ay+y)^*,(ay)^*)=d(y^*,(xy)^*)\geq3$.
By Lemma \ref{lem N(x+y)}(3) (with $ay,y$ in place of $x,y$, respectively), $N(ay+y)=N(y)$.

But now, $\gvp_{y^*}(a+1)=0,$ since $a+1\in U_{y^*},$ so by Corollary
\ref{cor properties lf le y}(1), $a+1\in N(b)$.
\medskip

\noindent
(2):\quad
By (V2),  $d(y^*,(x^{-1})^*)\geq3$.
By Lemma \ref{lem N(ab)}(3),(2) (taking $b,a^{-1}$ in place of $a,b$, respectively, and $\epsilon=1$),
\[
N(a^{-1})\subseteq N(a^{-1}b)\subseteq N(b).
\]

\medskip

\noindent
(3):\quad
Since $d((x^{-1})^*,(y^{-1})^*)\geq3$, we may use Lemma \ref{lem N(ab)}(2),
with $\epsilon a^{-1},\epsilon b^{-1}$ in place of $a,b$, and then with
$\epsilon b^{-1},\epsilon a^{-1}$ in place of $a,b$.
\medskip

\noindent
(4):\quad
As $1\in N(b)$ we have $n\in N(nb)$.
On the other hand, $n^{-1}\in N(b^{-1})$, by (3).
By Lemma \ref{lem basic properties of N(y)}(4), $n\not\in N(b)$.
Therefore $N(nb)\neq N(b)$.
\medskip

\noindent
(5):\quad
By (3), $\pm n^{-1}\in N(a^{-1})\cap N(b^{-1})$, so $a\pm n\in aN$ and $b\pm n\in bN$.
By Lemma \ref{lem N(x+y)}(2) (with $a+n,b-n$ in place of $x,y$), $N(a+b)=N(a+n)\cap N(b-n)$.
Similarly $N(a+b)=N(a-n)\cap N(b+n)$.
Further, by Lemma \ref{lem N(x+y)}(2), $N(a)\cap N(b)=N(a+b)$, so we obtain
\[
N(a)\cap N(b)\subseteq N(a+\epsilon n)\cap N(b+\epsilon n),
\]
for $\epsilon=\pm1$.

Let now $m\in N(a)\cap N(b)$.
We obtain that $a+\epsilon n+m, b+\epsilon n+m\in N$.
Therefore, if $\epsilon n+m\not\in N$, then by Remark \ref{rem a-b in N}(1), $a^*,(\epsilon n+m)^*, b^*$ is a path in $\Delta$,
contradicting $d(a^*,b^*)\geq3$.
Consequently, $\epsilon n+m\in N$, whence $m+\epsilon n\in N(a)\cap N(b)$.
\medskip

\noindent
(6):\quad
By (5), $1\pm n\in N$.
By (1) (with $x^{-1}y^{-1},x,c,a$ in place of $x,y,a,b$) we have $c+1\in N(a)$.
Similarly, $c+1\in N(b)$.
It follows from (5) that $c+1\pm n\in N(a)\cap N(b)\subseteq N$.
Hence $1\pm n\in N(c)$.
\end{proof}

\begin{cor}\label{cor N(a inverse) negative}
Let $x,y\in D^\times\sminus N$ satisfy $d(x^*,y^*)\geq3$.
Then
\begin{enumerate}
\item
$N(a^{-1})\subseteq N_{\le_{y^*}0}$ for every  $a\in \mathbb{P}_{x^*};$

\item
$N_{>_{y^*} \ga}\ne\emptyset$, for all $\ga\in\gC_{y^*};$

\item
if $D$ is a finite-dimensional algebra over an infinite field $k$, then for every $\alpha \in \Gamma_{y^*}$, the set $N_{\geq_{y^*} \alpha}$ contains a basis of $D$ over $k$.
\end{enumerate}
\end{cor}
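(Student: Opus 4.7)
The plan is to leverage Proposition~\ref{prop Px vs Py} and Corollary~\ref{cor properties lf le y}(1) (which characterizes $N_{\le_{y^*}\,0}$ as $\bigcap_{b \in \pp_{y^*}} N(b)$) for parts (1) and (2), and Proposition~\ref{prop N(a) basis} for part (3). For part (1), pick any $n \in N(a^{-1})$ with $a \in \pp_{x^*}$: Proposition~\ref{prop Px vs Py}(2) gives $n \in N(b)$ for every $b \in \pp_{y^*}$, and then Corollary~\ref{cor properties lf le y}(1) gives $n \in N_{\le_{y^*}\,0}$, proving part (1).

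For part (2), the main task is to produce a single $n \in N$ with $\gvp_{y^*}(n) >_{y^*} 0$; one can then translate by any $m$ with $\gvp_{y^*}(m) = \ga$ and place $mn$ in $N_{>_{y^*}\,\ga}$. To produce such an $n$, I would fix $a \in \pp_{x^*}$ and $b \in \pp_{y^*}$, use Lemma~\ref{lem basic properties of N(y)}(3) to pick $s \in N(a^{-1}b^{-1})$, and set $n := s^{-1}$. Then Proposition~\ref{prop Px vs Py}(4) gives $N(nb) \ne N(b)$, so $n \notin U_{y^*}$ and $\gvp_{y^*}(n) \ne 0$; meanwhile Proposition~\ref{prop Px vs Py}(3) (with $\epsilon = 1$) gives $s = n^{-1} \in N(a^{-1})$, so part (1) yields $\gvp_{y^*}(n^{-1}) \le 0$, hence $\gvp_{y^*}(n) \ge 0$. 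Combining forces $\gvp_{y^*}(n) > 0$, as required.

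For part (3), I would apply Proposition~\ref{prop N(a) basis} to $a^{-1}$ (for any fixed $a \in \pp_{x^*}$) to obtain a $k$-basis $v_1, \ldots, v_d$ of $D$ contained in $N(a^{-1})^{-1}$. By part (1), $N(a^{-1})^{-1} \subseteq N_{\ge_{y^*}\,0}$, so this basis lies in $N_{\ge_{y^*}\,0}$. For arbitrary $\ga \in \gC_{y^*}$, pick $m \in N$ with $\gvp_{y^*}(m) = \ga$; since $k \subseteq Z(D)$, left multiplication by $m$ is a $k$-linear bijection of $D$ carrying $N_{\ge_{y^*}\,0}$ into $N_{\ge_{y^*}\,\ga}$, so $m v_1, \ldots, m v_d$ is a $k$-basis of $D$ lying in $N_{\ge_{y^*}\,\ga}$. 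No step is genuinely hard; the only subtle point is the implicit use of the fact that in the possibly non-abelian partially ordered group $\gC_{y^*}$, $\gvp_{y^*}(n) \le 0$ is equivalent to $\gvp_{y^*}(n^{-1}) \ge 0$, which follows directly from the order-preservation axiom applied twice.
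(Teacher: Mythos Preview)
Your proposal is correct and follows essentially the same route as the paper: part~(1) via Proposition~\ref{prop Px vs Py}(2) and Corollary~\ref{cor properties lf le y}(1); part~(2) by producing $n$ with $n^{-1}\in N(a^{-1}b^{-1})$, using Proposition~\ref{prop Px vs Py}(3),(4) together with part~(1) to get $\gvp_{y^*}(n)>0$, and then translating via surjectivity of $\gvp_{y^*}$; and part~(3) via Proposition~\ref{prop N(a) basis} applied to $a^{-1}$, part~(1), and a translation. Your remark that $\gvp_{y^*}(n)\ge 0$ together with $\gvp_{y^*}(n)\ne 0$ forces $\gvp_{y^*}(n)>0$ even in a merely partially ordered group is exactly what the paper uses implicitly.
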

\begin{proof}
(1):\quad
For every $b\in \mathbb{P}_{y^*}$ Proposition \ref{prop Px vs Py}(2) gives $N(a^{-1})\subseteq N(b)$.
Now use Corollary \ref{cor properties lf le y}(1).
\medskip

\noindent
(2):\quad Let $a, b$ and $n$ be as in Proposition \ref{prop Px vs Py}.
By Proposition \ref{prop Px vs Py}(3), $n^{-1}\in N(a^{-1}),$
so by (1), $n^{-1}\in N_{\le_{y^*} 0}$.  It follows that $n\in N_{\ge_{y^*} 0}$.

Also, by Proposition \ref{prop Px vs Py}(4), $N(nb)\ne N(b),$ so,
by definition, $n\notin U_{y^*}$.  Hence $\gvp_{y^*}(n)\ne 0$.
It follows that $n\in N_{>_{y^*} 0}$.
Since $N_{>_{y^*} 0}\ne\emptyset,$
the assertion now follows from the surjectivity of $\varphi_{y^*}$.
\medskip

\noindent
(3):\quad
Pick $a \in Nx$ so that $1 \in N(a)$.
Then according to Corollary \ref{cor N(a inverse) negative}(1)
we have $N(a^{-1}) \subseteq N_{\leq_{y^*} 0}$, so $N(a^{-1})^{-1} \subseteq N_{\geq_{y^*} 0}$.
Proposition \ref{prop N(a) basis} gives a basis of $D$ over $k$ inside $N_{\geq_{y^*}0}$.

Now take $z\in N$ with $\alpha=\varphi_{y^*}(z)$.
We multiply the above basis by $z$ to obtain a basis of $D$ over $k$ in
 $z N_{\geq_{y^*} 0} = N_{\geq_{y^*} \alpha}$.
\end{proof}

\medskip

\begin{proof}[Proof of Theorem \ref{thm existence of s-level in diam 3}]
Let  $a\in \mathbb{P}_{x^*}$, $b\in\mathbb{P}_{y^*}$ and $n\in N$ with $n^{-1}\in N(a^{-1}b^{-1})$.
Set
\[
z:=x^{-1}y^{-1}.
\]
We claim that $\varphi_{z^*}$ is a strongly leveled map with level $\ga:=\varphi_{z^*}(n)$.
We recall that this means that
\begin{enumerate}
\item[(i)]
$\ga\ge_{z^*}0$ and $N_{>_{z^*}\ga}\ne\emptyset$.
\item[(ii)]
If $m\in N$ and $\varphi_{z^*}(m)>_{z^*}\ga$, then $\varphi_{z^*}(1\pm m)\leq_{z^*}0$.
\end{enumerate}
For (i) we use Corollary \ref{cor N(a inverse) negative} (but with $x,z$ in place of $x,y$).
Indeed by Proposition \ref{prop Px vs Py}(3), $n^{-1}\in N(a^{-1})$,
so by  Corollary \ref{cor N(a inverse) negative}(1),
$\varphi_{z^*}(n^{-1})\leq_{z^*}0$, i.e., $\ga\ge_{z^*}0$.  By
Corollary \ref{cor N(a inverse) negative}(2), $N_{>_{z^*} \ga}\ne\emptyset$.

For $m$ as in (ii) we have $\varphi_{z^*}(m^{-1})<\varphi_{z^*}(n^{-1})$.
By Corollary \ref{cor properties lf le y}(2) (with $z,a^{-1}b^{-1},m^{-1},n^{-1}$ in place of $y,y',m,n$), $m^{-1}\in N(a^{-1}b^{-1})$.
Hence, by Proposition \ref{prop Px vs Py}(6), $1\pm m\in N(c)$ for every $c\in\mathbb{P}_{z^*}$.
We now use Corollary  \ref{cor properties lf le y}(1).

We may now take $x^{-1}, y^{-1}x,y$ in place of $x,y,z$ to conclude that $\varphi_{y^*}$ is also a strongly leveled map.
\end{proof}

\section{The case where ${\rm diam}(\gD)\ge 4$}
In this section we continue with the notation and hypotheses of \S4.
The purpose of this section is to prove Theorem B(2) of the Introduction:
%
%
\begin{thm}\label{thm existence of VL in diam 4}
Assume that there are elements $x^*, y^*\in\gD$ such that $d(x^*, y^*)\ge 4$.
Then (after perhaps interchanging $x$ and $y$)
the map $\gvp_{y^*}\colon N\to \gC_{y^*}$ is a strong valuation-like map.
\end{thm}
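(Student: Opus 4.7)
By Theorem \ref{thm existence of s-level in diam 3}, the hypothesis $d(x^*,y^*)\ge 4$ (being stronger than $\ge 3$) already guarantees that $\varphi_{y^*}\colon N\to\gC_{y^*}$ is a strongly leveled map. Since ``strong valuation-like'' is synonymous with strongly leveled together with the extra requirement that the target group be \emph{totally} (not merely partially) ordered, the entire new content of Theorem \ref{thm existence of VL in diam 4} lies in promoting the partial order on $\gC_{y^*}$ (perhaps after interchanging the roles of $x$ and $y$) to a total order. By \eqref{eq meaning of le} and Lemma \ref{lem basic properties of N(y)}(1), this totality is equivalent to the following statement about subsets of $N$: for every $m\in N$, the sets $N(y)$ and $mN(y)=N(my)$ are comparable under inclusion, or, equivalently, either $mN(y)\subseteq N(y)$ or $m^{-1}N(y)\subseteq N(y)$.

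I would establish this comparability by contradiction. Suppose some $m\in N$ makes $N(y)$ and $mN(y)$ incomparable, and extract witnesses $s\in N(y)\sminus mN(y)$ and $t\in mN(y)\sminus N(y)$. Unwinding the definitions gives $y+s\in N$, $y+m^{-1}s\notin N$, $y+m^{-1}t\in N$ and $y+t\notin N$, so that both $(y+m^{-1}s)^*$ and $(y+t)^*$ are vertices of $\gD$ at distance $\le 1$ from $y^*$. The key tool available only in the diameter-$\ge 4$ regime is Lemma \ref{lem N(ab)}(1): whenever $d(a^*,b^*)\ge 4$ and $\gre\notin N(b)$, one has $N(ab)\cup N(ba)\subseteq N(a)\cap N(-a)$. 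The plan is to apply this with $a=x$ and $b$ chosen to be a suitable representative of $y^*$ (or of $(my)^*$) in its coset, shifted by an element of $\pp_{y^*}$ so that the hypothesis $\gre\notin N(b)$ holds; combined with Lemma \ref{lem N(ab)}(3) and the V-graph axioms to supply the reverse inclusion, this produces strong chain relations among $N(x)$, $N(y)$, $N(xy)$, $N(yx)$ and the analogous sets at $my$. Clashing these chains against the four membership conditions on $s,t$ above should force a path of length $\le 3$ in $\gD$ between $x^*$ and $y^*$, contradicting $d(x^*,y^*)\ge 4$.

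The main obstacle is the combinatorial bookkeeping in the last step: one must choose the correct representatives of $x^*$ and $y^*$ inside their cosets so that the $\gre\in\{\pm 1\}$ hypotheses of Lemma \ref{lem N(ab)}(1) are met (using Lemma \ref{lem basic properties of N(y)}(4) to rule out the degenerate case $N(y)=N$), and then track the behaviour of the $N$-sets under sums and products in $D$ carefully enough to produce the forced short path. The clause ``after perhaps interchanging $x$ and $y$'' in the statement indicates that this argument is asymmetric in $x$ and $y$: when one orientation fails to yield the appropriate $\gre$, the symmetric version at $x^*$ must succeed, so that at least one of $\varphi_{x^*},\varphi_{y^*}$ is strong valuation-like. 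Once totality of $\le_{y^*}$ is secured, the strongly leveled property delivered by Theorem \ref{thm existence of s-level in diam 3} automatically upgrades to strong valuation-like, completing the proof.
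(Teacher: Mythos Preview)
Your high-level reduction is correct and matches the paper: Theorem \ref{thm existence of s-level in diam 3} gives the strongly leveled property, and the remaining content is the totality of $\le_{y^*}$, which you correctly translate into comparability of $N(y)$ and $N(my)$ for every $m\in N$. You also correctly identify Lemma \ref{lem N(ab)}(1) as the tool that becomes available only when $d\ge 4$.

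The gap is in your execution. You propose to attack $In(y^*,y^*)$ directly: pick witnesses $s\in N(y)\sminus N(my)$ and $t\in N(my)\sminus N(y)$, note that $(y+t)^*$ and $(y+m^{-1}s)^*$ are vertices at distance $\le 1$ from $y^*$, and then ``force a path of length $\le 3$ between $x^*$ and $y^*$''. But nothing in this setup produces a vertex close to $x^*$; your only link to $x$ is via Lemma \ref{lem N(ab)}(1), which outputs \emph{inclusions of $N$-sets}, not edges in $\gD$. More fundamentally, Lemma \ref{lem N(ab)}(1) requires $d(a^*,b^*)\ge 4$, so it can only compare an element of $Nx$ with an element of $Ny$; it says nothing about two elements of $Ny$ such as $y$ and $my$. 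Your plan never explains how to bridge that gap, and your diagnosis of the asymmetry (attributing the ``interchange $x$ and $y$'' clause to the choice of $\gre$) is off the mark.

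The paper's route is the missing idea. One first proves the \emph{cross} comparability $In(x^*,y^*)$: for every $a\in Nx$ and $b\in Ny$, either $N(a)\subseteq N(b)$ or $N(b)\subseteq N(a)$ (Proposition \ref{prop In(x*,y*)}). This is where Lemma \ref{lem N(ab)}(1) and (3) are applied, and here they apply cleanly since $d(a^*,b^*)\ge 4$. One then introduces the asymmetric relation $Inc$: roughly, $Inc(y^*,x^*)$ says that in addition to $In(x^*,y^*)$, the family $\{N(a):a\in\pp_{x^*}\}$ is downward cofinal in $\{N(b):b\in\pp_{y^*}\}$. A short pigeonhole argument (Lemma \ref{lem dotin dotinc}(1)) shows that $In(x^*,y^*)$ forces $Inc(y^*,x^*)$ or $Inc(x^*,y^*)$---\emph{this} is the source of the ``after perhaps interchanging $x$ and $y$'' clause. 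Finally, $Inc(y^*,x^*)$ plus $In(x^*,y^*)$ bootstraps to $In(y^*,y^*)$ (Lemma \ref{lem dotin dotinc}(4)), which is exactly totality of $\le_{y^*}$ (Lemma \ref{lem In(y,y)}). The step you are missing is this intermediate cross-comparison and its transitivity; without it, the $d\ge 4$ hypothesis has no point of entry.
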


The following notation should be compared with \cite[Notation 5.5, p.~944]{RSS}.

\begin{notation}\label{not dotN}
Let $M\subseteq N$ be a subgroup, and let $r, s\in D\sminus N$.
\begin{enumerate}
\item
We denote $\N_M(r)=N(r)\cap M$.
When $M$ is clear from the context we will omit the subscript $M$ and write $\N(r)$
in place of $\N_M(r)$ .
Note that while $N(r)$ is always non-empty, $\N_M(r)$ may well be empty.
If $\N_M(r)=\emptyset$ and $m\in M$, then our convention is that $m\N_M(r)=\emptyset$.

\item
We denote by $\In(r^*,s^*)$ the following relation:
for any $a\in Nr$ and $b\in Ns$ we have $\N_M(a)\subseteq \N_M(b)$ or $\N_M(b)\subseteq \N_M(a)$.

\item
We denote by $\Inc(s^*,r^*)$ the following relation:
$\In(r^*,s^*)$ and for any $b\in \pp_{s^*}$ there exists $a\in \pp_{r^*}$
such that $\N_M(b)\supseteq \N_M(a)$.
\end{enumerate}
Of course, when $M=N$ we have $N(r)=\N_N(r)$.
In this case we abbreviate
\[
In(r^*, s^*)=\mathrm{In}_N(r^*, s^*), \quad Inc(s^*, r^*)=\mathrm{Inc}_N(s^*,r^*)
\]



\end{notation}

Note that in Notation \ref{not dotN}(2) we allow $r^*=s^*$.
Furthermore we have (compare with \cite[Proposition 5.7(2)]{RSS}):

\begin{lemma}\label{lem In(y,y)}
Let $M\subseteq N$ be a subgroup and let $s\in\Dt\sminus N$. Assume that $\In(s^*,s^*)$ holds.
Then the subgroup $\gvp_{s^*}(M)\subseteq\gC_{s^*}$ is totally ordered.
In particular, if $M=N,$ then $(\Gamma_{s^*},\leq_{s^*})$ is a totally ordered group.
\end{lemma}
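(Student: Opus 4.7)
The plan is to argue by contradiction, converting failure of comparability in $\gvp_{s^*}(M)$ into an explicit contradiction with $\In_M(s^*,s^*)$ via the equivalence (2)$\Leftrightarrow$(7) of Lemma~\ref{lem eq cond}. The point is that condition (7) restates $\gvp_{s^*}(m)\le_{s^*}\gvp_{s^*}(n)$ in terms of membership of $m,n$ in the sets $N(s')$ as $s'$ ranges over $Ns$---exactly the kind of data that $\In_M(s^*,s^*)$ governs.

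Concretely, I would take $\alpha,\beta\in\gvp_{s^*}(M)$, write $\alpha=\gvp_{s^*}(m),\ \beta=\gvp_{s^*}(n)$ for some $m,n\in M$, and assume for contradiction that $\alpha$ and $\beta$ are incomparable. Then $\gvp_{s^*}(m)\not\le_{s^*}\gvp_{s^*}(n)$, so by the equivalence (2)$\Leftrightarrow$(7) of Lemma~\ref{lem eq cond} there exists $s_1\in Ns$ with $n\in N(s_1)$ and $m\notin N(s_1)$; symmetrically, from $\gvp_{s^*}(n)\not\le_{s^*}\gvp_{s^*}(m)$ one obtains $s_2\in Ns$ with $m\in N(s_2)$ and $n\notin N(s_2)$. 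Since $m,n\in M$, intersecting with $M$ preserves these relations: we get
\[
n\in\N_M(s_1)\sminus\N_M(s_2)\quad\text{and}\quad m\in\N_M(s_2)\sminus\N_M(s_1),
\]
so neither of $\N_M(s_1),\N_M(s_2)$ contains the other. This directly contradicts $\In_M(s^*,s^*)$ applied to $a=s_1,\ b=s_2$ (both in $Ns$). Hence any two elements of $\gvp_{s^*}(M)$ are comparable, i.e.\ $\gvp_{s^*}(M)$ is totally ordered.

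For the ``in particular'' assertion, when $M=N$ the map $\gvp_{s^*}\colon N\to\gC_{s^*}$ is the canonical surjection (see the discussion preceding Lemma~\ref{lem eq cond}), so $\gvp_{s^*}(M)=\gC_{s^*}$, which gives that $(\gC_{s^*},\le_{s^*})$ itself is totally ordered as a group.

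The only real step requiring care is to recognize condition~(7) of Lemma~\ref{lem eq cond} as the right concrete reformulation of $\frakP_{s^*}$ for this application; once that is in hand the argument is purely formal, and in particular uses neither the V-graph axioms nor any of the finer results of \S\ref{sect the ordered gp} or \S\ref{sect diam 3}.
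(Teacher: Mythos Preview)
Your proof is correct and follows essentially the same approach as the paper's: translate failure of comparability in $\gvp_{s^*}(M)$ into statements about the sets $\N_M(\cdot)$ via Lemma~\ref{lem eq cond}, then invoke $\In_M(s^*,s^*)$. The only cosmetic difference is that the paper uses the equivalence (2)$\Leftrightarrow$(4) (working with elements of $\pp_{s^*}$) and argues asymmetrically (if $\gvp_{s^*}(m)\le\gvp_{s^*}(m')$ fails then $\gvp_{s^*}(m')\le\gvp_{s^*}(m)$ holds), whereas you use (2)$\Leftrightarrow$(7) and a symmetric contradiction; both are equally valid.
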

\begin{proof}
%
Let $m,m'\in M$, and suppose that $\varphi_{s^*}(m)\leq\varphi_{s^*}(m')$ does not hold.
By the equivalence (2)$\Leftrightarrow$(4) in Lemma \ref{lem eq cond}, there exists $b\in\pp_{s^*}$
such that $m\not\in N(m'b)$.
Let $c\in\pp_{s^*}$.
Then $m\in \N_M(mc)\sminus\N_M(m'b)$.
As $mc,m'b\in Ns$, our assumption $\In(s^*,s^*)$ implies that $\N_M(m'b)\subseteq\N_M(mc)$.
Therefore $m'\in \N_M(m'b)\subseteq\N_M(mc)\subseteq N(mc)$.
Using again the equivalence (2)$\Leftrightarrow$(4) in Lemma \ref{lem eq cond}, we conclude that
$\varphi_{s^*}(m')\leq\varphi_{s^*}(m)$.
\end{proof}

Next we have

\begin{prop}[\cite{RS}, Proposition 6.11]\label{prop In(x*,y*)}
Let $x,y \in \Dt\sminus N$ with $d(x^*,y^*)\ge 4$.
Then $In(x^*,y^*)$.
\end{prop}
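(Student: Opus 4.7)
The plan is to argue by contradiction. Suppose $\mathrm{In}(x^*,y^*)$ fails. Then there exist $a\in Nx$ and $b\in Ny$ (so $d(a^*,b^*)=d(x^*,y^*)\ge 4$), together with $m\in N(a)\smallsetminus N(b)$ and $n\in N(b)\smallsetminus N(a)$. Unpacking the definition of $N(\cdot)$, this gives the four key relations
\[
a+m\in N,\qquad b+n\in N,\qquad a+n\notin N,\qquad b+m\notin N.
\]
In particular $(a+n)^*$ and $(b+m)^*$ are vertices of $\Delta$, and since $(a+n)-a=n\in N$ and $(b+m)-b=m\in N$, Remark \ref{rem a-b in N}(1) yields $d(a^*,(a+n)^*)\le 1$ and $d(b^*,(b+m)^*)\le 1$.

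By the triangle inequality it then suffices to show the apparently stronger bound
\[
d\bigl((a+n)^*,(b+m)^*\bigr)\le 1,
\]
for this would force $d(a^*,b^*)\le 3$, contradicting our hypothesis. Here the point is the additive identity
\[
(a+n)+(b+m)=(a+m)+(b+n),
\]
whose right-hand side is a sum of two elements of $N$. The plan is to use the V-graph axioms, together with Lemmas \ref{lem N(x+y)} and \ref{lem N(ab)}, to produce representatives $r\in(a+n)^*$ and $s\in(b+m)^*$ with $r-s\in N$, so that axiom (V1) applies. A natural move is to pass to the representatives $\tilde a=am^{-1}\in\pp_{a^*}$ and $\tilde b=bn^{-1}\in\pp_{b^*}$ (noting that $a+m\in N$ forces $1\in N(\tilde a)$, and likewise $1\in N(\tilde b)$), so that Lemma \ref{lem N(ab)}(3) applied with $\gre=1$ and the hypothesis $d(\tilde a^*,\tilde b^*)\ge 4\ge 3$ yields the containments $N(\tilde b)\subseteq N(\tilde a\tilde b)\cap N(\tilde b\tilde a)$ and $N(\tilde a)\subseteq N(\tilde a\tilde b)\cap N(\tilde b\tilde a)$. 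These containments, combined with Lemma \ref{lem N(x+y)}(2) giving $N(a+b)=N(a)\cap N(b)$, produce enough additive compatibility between $(a+n)$ and $(b+m)$ to exhibit the desired element of $N$.

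The main obstacle is this last combinatorial step: threading the explicit $N$-membership needed for (V1) through the web of containments supplied by Lemmas \ref{lem N(x+y)} and \ref{lem N(ab)}. The delicate point is that $N$ is not closed under addition, so even though $(a+m)$ and $(b+n)$ both lie in $N$, their sum $(a+b)+(m+n)$ need not, and one really has to twist by suitable elements of $N$ before invoking (V1). This is precisely the content of \cite[Proposition 6.11]{RS}, whose proof carries over verbatim to our setting (since only the V-graph axioms (V1) and Remark \ref{rem a-b in N}(1) are used in the original argument), so we refer there for the detailed verification.
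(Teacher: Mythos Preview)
Your proposal is not a proof: you sketch an approach, concede you cannot complete the key step, and then defer to \cite[Proposition~6.11]{RS}. Moreover, the approach you sketch --- trying to show $d\bigl((a+n)^*,(b+m)^*\bigr)\le 1$ via the additive identity $(a+n)+(b+m)=(a+m)+(b+n)$ --- is not the argument the paper gives, and it is not clear it can be completed at all: the right-hand side is a sum of two elements of $N$, but $N$ is not additively closed, and none of the containments you extract from Lemmas~\ref{lem N(x+y)} and~\ref{lem N(ab)} pin down the coset of that sum. Your parenthetical remark that ``only the V-graph axioms (V1) and Remark~\ref{rem a-b in N}(1) are used'' is also inaccurate: the actual argument goes through Lemma~\ref{lem N(ab)}(3), whose proof uses (V2) and (V$3'$).

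The paper's proof is short, direct, and does not argue by contradiction in the way you propose. One assumes $N(b)\nsubseteq N(a)$, picks $n\in N(b)\sminus N(a)$, and \emph{normalizes} by setting $c=n^{-1}a$, $d=n^{-1}b$, so that $1\in N(d)\sminus N(c)$ and $d(c^*,d^*)=d(a^*,b^*)\ge 4$. Then Lemma~\ref{lem N(ab)}(3) (with $\gre=1\in N(d)$) gives $N(c)\subseteq N(cd)$, and Lemma~\ref{lem N(ab)}(1) (with $\gre=1\notin N(c)$, using the full strength of $d\ge 4$) gives $N(cd)\subseteq N(d)$. Chaining these yields $N(c)\subseteq N(d)$, i.e.\ $N(a)\subseteq N(b)$. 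No second witness $m\in N(a)\sminus N(b)$ is ever introduced, and no path of length~$3$ is constructed; the whole point is that the multiplicative Lemma~\ref{lem N(ab)} does the work once one has arranged $1\in N(d)\sminus N(c)$.
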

\begin{proof}
Let $a \in Nx$ and $b\in Ny$.
We need to show that $N(a)\subseteq N(b)$ or $N(b)\subseteq N(a)$.
So assume $N(b)\nsubseteq N(a)$, and let $n\in N(b)\sminus N(a)$.
Let $c:= n^{-1}a$ and $d:= n^{-1}b$.
Then $1\in N(d)\sminus  N(c)$.
By Lemma \ref{lem N(ab)}(3),(1), respectively, $N(c)\subseteq N(cd)\subseteq N(d)$.
Hence also $N(a)\subseteq N(b)$.
\end{proof}

The next two lemmas list some properties of the relations $\In$ and $\Inc$.
They follow \cite[3.10]{Seg2} and \cite[Lemma 6.12]{RS}.

\begin{lemma}\label{lem dotin dotinc}
Let $r,s,t\in \Dt\sminus N$ and let $M\subseteq N$ be a subgroup.
Then,
\begin{enumerate}
\item
if\ $\In(r^*, s^*)$, then
$\Inc(r^*, s^*)$ or $\Inc(s^*, r^*)$;

\item
$\Inc(s^*,r^*)$ and $\In(r^*,t^*)$ imply $\In(s^*,t^*)$;

\item
$\Inc(s^*,r^*)$ and $\Inc(r^*,t^*)$ imply $\Inc(s^*,t^*)$;

\item
if\  $\Inc(s^*, r^*)$, then $\In(s^*, s^*)$.
\end{enumerate}
\end{lemma}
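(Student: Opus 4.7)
The plan is to first establish (1) by contradiction, then prove (2) by a translation argument that exploits how the $\Inc$-condition controls \emph{every} element of $\pp_{s^*}$, and finally deduce (3) and (4) as consequences of (2).

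For (1), I would assume both $\Inc(r^*,s^*)$ and $\Inc(s^*,r^*)$ fail and pick witnesses: an $a_0\in\pp_{r^*}$ such that $\N_M(b)\not\subseteq\N_M(a_0)$ for every $b\in\pp_{s^*}$, and a $b_0\in\pp_{s^*}$ such that $\N_M(a)\not\subseteq\N_M(b_0)$ for every $a\in\pp_{r^*}$. Specializing the first to $b=b_0$ and the second to $a=a_0$ makes $\N_M(a_0)$ and $\N_M(b_0)$ incomparable, violating $\In(r^*,s^*)$.

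The substance of the lemma is part (2). Fix $b\in Ns$ and $c\in Nt$, assume $\N_M(c)\not\subseteq\N_M(b)$ and pick $m\in\N_M(c)\sminus\N_M(b)$; I want to show that $m'\in\N_M(c)$ for every $m'\in\N_M(b)$. The key step is a translation: since $m'\in N(b)$, the element $b':=m'^{-1}b$ lies in $\pp_{s^*}$, and the identity $N(nx)=nN(x)$ from Lemma \ref{lem basic properties of N(y)}(1), together with $m'\in M$, gives the clean formula $\N_M(b')=m'^{-1}\N_M(b)$. Applying $\Inc(s^*,r^*)$ to $b'$ yields $a'\in\pp_{r^*}$ with $\N_M(a')\subseteq\N_M(b')$; translating back, $a:=m'a'\in Nr$ satisfies $\N_M(a)=m'\N_M(a')$ and therefore $m'\in\N_M(a)\subseteq\N_M(b)$. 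Now $\In(r^*,t^*)$ forces $\N_M(a)$ and $\N_M(c)$ to be comparable, and the option $\N_M(c)\subseteq\N_M(a)$ would put $m\in\N_M(a)\subseteq\N_M(b)$, a contradiction; hence $\N_M(a)\subseteq\N_M(c)$ and in particular $m'\in\N_M(c)$, as needed.

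Parts (3) and (4) then drop out cheaply. For (3), the $\In(s^*,t^*)$ half of $\Inc(s^*,t^*)$ is (2) applied with the $\In(r^*,t^*)$ extracted from $\Inc(r^*,t^*)$; and for any $b\in\pp_{s^*}$, chaining the two $\Inc$-conditions produces an $a\in\pp_{r^*}$ and then a $c\in\pp_{t^*}$ with $\N_M(c)\subseteq\N_M(a)\subseteq\N_M(b)$. For (4), specialize (2) to $t=s$, using that $\In(r^*,s^*)$ is built into $\Inc(s^*,r^*)$. The main obstacle is the translation in (2): the relations $\In$ and $\Inc$ are quantified over arbitrary representatives of $Nr,Ns,Nt$, whereas the $\Inc$-condition only produces data for elements of $\pp$, so one must multiply $b$ by a well-chosen $m'\in M$ to enter $\pp_{s^*}$ and then use $N(nx)=nN(x)$ with $n\in M$ to transport the resulting inclusion back to $b$.
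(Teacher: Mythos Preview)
Your proof is correct and follows essentially the same strategy as the paper. The only cosmetic difference is in part (2): the paper translates once by the witness $m\in\N_M(b)\sminus\N_M(c)$ to reduce to the case $1\in\N_M(b)\sminus\N_M(c)$ (so that $b\in\pp_{s^*}$ already), and then applies $\Inc(s^*,r^*)$ a single time to get one $a$ that works globally; you instead translate separately by each $m'\in\N_M(b)$ and produce a possibly different $a$ for each $m'$. Both arguments hinge on the same idea---use an element of $M$ to move $b$ into $\pp_{s^*}$ so that $\Inc$ applies, then transport back via $\N_M(nx)=n\N_M(x)$ for $n\in M$---and parts (1), (3), (4) are handled identically.
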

\begin{proof}
(1):\quad
Suppose $\Inc(r^* , s^*)$ does not
hold. Then there is $a \in \pp_{r^*}$ such that $\N(a) \subseteq
\N(b),$ for all $b \in \pp_{s^*},$ so $\Inc(s^* , r^*)$ holds.
\medskip

\noindent
(2):\quad
Let $b\in Ns$ and $c\in Nt$.
Suppose that $\N(b)\nsubseteq \N(c)$.
We need to show that $\N(b)\supseteq \N(c)$.
Let $m \in \N(b)\sminus \N(c)$ .
Then replacing $b$ by $m^{-1}b$ and $c$ by $m^{-1}c$, we may assume that $1\in \N(b)\sminus \N(c)$
(we note that $m \in M,$ so $\N(m^{-1}d) = m^{-1}\N(d),$ for any $d \in \Dt \sminus N$).
By $\Inc(s^*,r^*)$, we can pick $a\in \pp_{r^*}$, with $\N(b)\supseteq \N(a)$.
As $1\in \N(a)\sminus \N(c)$, $\In(r^*,t^*)$ implies that $\N(a)\supseteq \N(c)$, so
$\N(b)\supseteq \N(a)\supseteq \N(c)$, as asserted.

\medskip

\noindent
(3):\quad
First, by (2) we have $\In(s^*,t^*)$.
Let $b\in\pp_{s^*}$.
By $\Inc(s^*,r^*)$, there is $a\in\pp_{r^*}$, with $\N(b)\supseteq \N(a)$.
By $\Inc(r^*,t^*)$, there is $c\in\pp_{t^*}$, with $\N(a)\supseteq \N(c)$.
Thus $\N(b)\supseteq \N(a)\supseteq \N(c)$, and we get $\Inc(s^*,t^*)$.
\medskip

\noindent
(4):\quad
Since $\Inc(s^* , r^*)$ we have $\In(r^*,s^*)$.
Hence (2) (with $t=s$) gives $\In(s^*,s^*)$.
\end{proof}

\begin{lemma}\label{lem consequences of Inc}
Let $r,s\in \Dt\sminus N$ and suppose that $Inc(s^*,r^*)$.
Then
\begin{enumerate}
\item $\le_{s^*}$ is a total order relation;

\item $N_{\le_{s^*}\, 0}\supseteq N_{\le_{r^*}\, 0}$;

\item
there is an epimorphism $\psi\colon\Gamma_{r^*}\to\Gamma_{s^*}$ of partially ordered groups such that
$\gvp_{s^*}=\psi\circ\gvp_{r^*}$.
\end{enumerate}
\end{lemma}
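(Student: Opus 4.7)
The plan is to handle the three parts in order, drawing on Lemma~\ref{lem In(y,y)}, Lemma~\ref{lem dotin dotinc}, the equivalences in Lemma~\ref{lem eq cond}, and Corollary~\ref{cor properties lf le y}. The one recurring computational fact I will invoke is the identity $N(na)=nN(a)$ from Lemma~\ref{lem basic properties of N(y)}(1), which will let me transfer an inclusion $N(a)\subseteq N(b)$ to an inclusion $N(na)\subseteq N(nb)$.

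Part (1) should be immediate: Lemma~\ref{lem dotin dotinc}(4) turns the hypothesis $\Inc(s^*,r^*)$ into $\In(s^*,s^*)$, and then Lemma~\ref{lem In(y,y)} applied with $M=N$ gives that $\gC_{s^*}$ is totally ordered. For part (2), I will use the characterization $N_{\le_{y^*}0}=\{m\in N\mid m\in N(b)\text{ for every }b\in\pp_{y^*}\}$ supplied by Corollary~\ref{cor properties lf le y}(1). Given $m\in N_{\le_{r^*}0}$ and an arbitrary $b\in\pp_{s^*}$, the definition of $\Inc(s^*,r^*)$ furnishes $a\in\pp_{r^*}$ with $N(a)\subseteq N(b)$, whence $m\in N(a)\subseteq N(b)$; since $b$ was arbitrary, $m\in N_{\le_{s^*}0}$.

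For part (3), the main step is to establish the inclusion of relations $\frakP_{r^*}\subseteq\frakP_{s^*}$. I assume $m\,\frakP_{r^*}\,n$; by the equivalence (2)$\Leftrightarrow$(4) of Lemma~\ref{lem eq cond} this means $m\in N(na)$ for every $a\in\pp_{r^*}$. Given $b\in\pp_{s^*}$, I pick $a\in\pp_{r^*}$ with $N(a)\subseteq N(b)$ from $\Inc(s^*,r^*)$, and then Lemma~\ref{lem basic properties of N(y)}(1) yields
\[
m\in N(na)=nN(a)\subseteq nN(b)=N(nb),
\]
so that $m\,\frakP_{s^*}\,n$ by the same equivalence. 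Applying this inclusion of relations to the pairs $(m,1)$ and $(1,m)$ shows $U_{r^*}\subseteq U_{s^*}$, so the epimorphism $\gvp_{s^*}\colon N\to\gC_{s^*}$ factors uniquely through $\gvp_{r^*}$ as $\gvp_{s^*}=\psi\circ\gvp_{r^*}$ for a group epimorphism $\psi\colon\gC_{r^*}\to\gC_{s^*}$; and the inclusion $\frakP_{r^*}\subseteq\frakP_{s^*}$ is precisely the statement that $\psi$ preserves the order.

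The lemma is essentially a bookkeeping consequence of the definitions of $\Inc$, $U_{y^*}$ and $\le_{y^*}$, so I do not anticipate a serious obstacle; the only thing to remember is to use Lemma~\ref{lem basic properties of N(y)}(1) to move the factor $n$ inside the sets $N(\cdot)$ when proving part (3).
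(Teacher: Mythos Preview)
Your proof is correct and matches the paper's approach in parts (1) and (2) exactly. For part (3) there is a small organizational difference: the paper deduces both $U_{r^*}\subseteq U_{s^*}$ and the order-preservation of $\psi$ from part (2) alone (via the invariance of the preorder, since $m\,\frakP_{r^*}\,n$ is equivalent to $mn^{-1}\in N_{\le_{r^*}0}$), whereas you prove the inclusion $\frakP_{r^*}\subseteq\frakP_{s^*}$ directly and then specialize. The underlying computation is the same; your version just makes the order-preservation step a bit more explicit.
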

\begin{proof}
(1):\quad
As $Inc(s^*,r^*),$ Lemma
\ref{lem dotin dotinc}(4) gives $In(s^*,s^*)$.
Now use Lemma \ref{lem In(y,y)}.
\medskip

\noindent
(2):\quad
This follows from Corollary \ref{cor properties lf le y}(1).
Indeed, let $b\in \pp_{s^*}$,
and (using $Inc(s^*,r^*)$) pick $a\in\pp_{r^*}$ with $N(b)\supseteq N(a)$.
Then $N(b)\supseteq N(a)\supseteq N_{\le_{r^*}\, 0}$.
As this holds for all $b\in\pp_{s^*}$,
$N_{\le_{s^*}\, 0}\supseteq N_{\le_{r^*}\, 0}$.
\medskip

\noindent
(3):\quad
Recall that for $z=r, s$, the kernel of the map $\gvp_{z^*}\colon N\to\gC_{z^*}$ is $U_{z^*}$.
By (2), $U_{r^*}\subseteq U_{s^*}$, and hence there exists an epimorphism $\psi\colon \gC_{r^*}\to\gC_{s^*},$
such that $\gvp_{s^*}=\psi\circ\gvp_{r^*}$, as required.
Moreover, by (2) again, $\psi$ is a homomorphism of partially ordered groups.
\end{proof}
\medskip

\noindent
\begin{proof}[Proof of Theorem \ref{thm existence of VL in diam 4}]
Let $x,y\in \Dt\sminus N$ such that $d(y^*,x^*)\ge 4$.
By Theorem \ref{thm existence of s-level in diam 3}, both $\varphi_{x^*}\colon N\to\Gamma_{x^*}$ and
$\varphi_{y^*}\colon N\to\Gamma_{y^*}$ are strongly leveled maps.
Thus it remains to show that one of the groups $\gC_{x^*},\gC_{y^*}$ is totally ordered.

Now by Proposition \ref{prop In(x*,y*)}, $In(x^*,y^*)$.
By Lemma \ref{lem dotin dotinc}(1),
after perhaps interchanging $x$ and $y$, we may assume that $Inc(y^*,x^*)$.
Then by Lemma \ref{lem consequences of Inc}(1), $\gC_{y^*}$ is a totally ordered group.
\end{proof}

\section{The case where ${\rm diam}(\gD)\ge 5$}
In this section we continue the notation and hypotheses of \S4.
The purpose of this section is to prove Theorem B(3) of the Introduction.
We thus assume that $x^*, y^*\in\gD$ are such that $d(x^*,y^*)\ge 5$.
By Lemma \ref{lem dotin dotinc}(1), we may assume without loss of generality
that $Inc(y^*, x^*)$ holds.
We will prove

\begin{thm}\label{thm level 0 in diam ge 5}
The map $\gvp_{y^*}\colon N\to\gC_{y^*}$ is a strong valuation like
map of s-level $0.$
\end{thm}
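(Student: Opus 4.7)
The plan is to combine the strong valuation-like structure already obtained from Theorem \ref{thm existence of VL in diam 4} with the extra flexibility that the strictly stronger hypothesis $d(x^*,y^*)\geq 5$ affords. Since $d(x^*,y^*)\geq 5 \geq 4$ and $Inc(y^*,x^*)$ holds, Theorem \ref{thm existence of VL in diam 4} already provides that $\gvp_{y^*}\colon N\to\gC_{y^*}$ is a strong valuation-like map and that $\gC_{y^*}$ is totally ordered. So the task reduces to showing that the s-level may be taken to be $0$, which (since $N_{>_{y^*} 0}\neq\emptyset$ by Corollary \ref{cor N(a inverse) negative}(2)) amounts to proving that $1\pm m \in N_{\leq_{y^*} 0}$ for every $m\in N_{>_{y^*}0}$. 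By Corollary \ref{cor properties lf le y}(1), this is equivalent to saying $c+1\pm m\in N$ for every $c\in \pp_{y^*}$, i.e.\ $1 \pm m \in N(c)$ for every $c \in \pp_{y^*}$.

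First I would invoke Proposition \ref{prop Px vs Py}(6) via the substitution $(x,y)\to (x^{-1},y^{-1}x)$ used at the end of the proof of Theorem \ref{thm existence of s-level in diam 3}. This substitution makes the ``target'' coset $\pp_{(x^{-1}y^{-1})^*}$ appearing in (6) become $\pp_{y^*}$, since after substitution $(x^{-1})^{-1}\cdot(y^{-1}x)^{-1}=y$. The hypothesis $d(x^*,y^*)\geq 5$, together with Remark \ref{rem a-b in N}(4) and axiom (V$3'$), easily gives the distance $d((x^{-1})^*,(y^{-1}x)^*)\geq 3$ needed for Proposition \ref{prop Px vs Py} to apply. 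Directly applied, this yields $1\pm n \in N(c)$ for every $c\in\pp_{y^*}$ whenever $n\in N$ satisfies $n^{-1}\in N(a^{-1}b^{-1})$ for some $a\in\pp_{(x^{-1})^*}$, $b\in\pp_{(y^{-1}x)^*}$.

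The main obstacle, and the genuinely new step beyond the proof of Theorem \ref{thm existence of s-level in diam 3}, is to show that \emph{every} $m\in N_{>_{y^*}0}$ is captured by the argument, i.e., that one can find $a,b$ as above with $m^{-1}\in N(a^{-1}b^{-1})$. In the diameter $\geq 3$ (respectively $\geq 4$) settings the same argument produced only a level $\alpha=\gvp_{y^*}(n)$ which is strictly positive (by Proposition \ref{prop Px vs Py}(3)(4)), and the leap from ``some positive level'' to ``level $0$'' is precisely the feature specific to the case $d\geq 5$. The strengthened hypothesis upgrades the auxiliary distances entering the proof of Proposition \ref{prop Px vs Py}—notably those required for Lemma \ref{lem N(ab)}(1) and for iterating Proposition \ref{prop Px vs Py}(5) along the coset $\pp_{y^*}$—from $\geq 3$ to $\geq 4$. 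This provides the additional room needed to realize any prescribed $m^{-1}\in N_{<_{y^*}0}$ as an element of $N(a^{-1}b^{-1})$ for a suitable choice of $a\in\pp_{(x^{-1})^*}$ and $b\in\pp_{(y^{-1}x)^*}$.

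Once this realization is established, a further application of Proposition \ref{prop Px vs Py}(6) immediately yields the desired inclusion $1\pm m\in N(c)$ for every $c\in\pp_{y^*}$, hence $\gvp_{y^*}(1\pm m)\leq_{y^*} 0$ by Corollary \ref{cor properties lf le y}(1). This establishes the s-level $0$ condition (SL) with $\ga=0$, completing the proof.
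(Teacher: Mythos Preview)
Your overall reduction is sound: by Theorem \ref{thm existence of VL in diam 4} and the standing assumption $Inc(y^*,x^*)$, the map $\gvp_{y^*}$ is already a strong valuation-like map with $\gC_{y^*}$ totally ordered, and what remains is exactly the inclusion $1+N_{>_{y^*}0}\subseteq N_{\le_{y^*}0}$. Your idea of invoking Proposition \ref{prop Px vs Py}(6) under the substitution $(x,y)\mapsto(x^{-1},y^{-1}x)$ is also correct as far as it goes.

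The gap is precisely where you say the ``genuinely new step'' lies. You assert that the hypothesis $d(x^*,y^*)\ge 5$ ``provides the additional room needed to realize any prescribed $m^{-1}\in N_{<_{y^*}0}$ as an element of $N(a^{-1}b^{-1})$ for a suitable choice of $a\in\pp_{(x^{-1})^*}$ and $b\in\pp_{(y^{-1}x)^*}$,'' but you give no mechanism for producing such $a,b$. Unpacking the constraints: you need $a\in Nx^{-1}$ with $a+1\in N$, $b\in Ny^{-1}x$ with $b+1\in N$, and $a^{-1}b^{-1}+m^{-1}\in N$. There is no evident reason, from anything proved so far, why an arbitrary $m^{-1}\in N_{<_{y^*}0}$ should satisfy this last condition for \emph{some} such pair; the phrase ``upgrades the auxiliary distances \ldots\ from $\ge 3$ to $\ge 4$'' does not point to any lemma that would deliver this. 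Without this realization step your argument does not move past what Theorem \ref{thm existence of s-level in diam 3} already gives.

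The paper's route is different and more direct. It does not attempt to feed an arbitrary $m$ back through Proposition \ref{prop Px vs Py}(6). Instead it proves a new auxiliary statement (Lemma \ref{lem N(a)subseteq N(b)}): if $a\in xN$, $b\in yN$, $N(a)\subseteq N(b)$, and $n\notin N(b)$, then $N(a)\subseteq N(a+n)\cap N(b+n)$. This is exactly where $d(x^*,y^*)\ge 5$ is used, to guarantee $d((a-n)^*,(b+n)^*)\ge 3$ so that Lemma \ref{lem N(x+y)}(2) applies. Given $m\in N_{>_{y^*}0}$, one then picks $b\in\pp_{y^*}$ with $m\notin N(b)$ and (via $Inc(y^*,x^*)$) $a\in\pp_{x^*}$ with $N(a)\subseteq N(b)$; the lemma yields $a+m+1,\,b+m+1\in N$, and a short path argument forces $m+1\in N$. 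A second pass through the same lemma, using that $1+a\in N_{\le_{y^*}0}$, gives $z+1+m\in N$ for every $z\in\pp_{y^*}$, i.e.\ $1+m\in N_{\le_{y^*}0}$. The key idea you are missing is this additive containment lemma, not a refinement of the Proposition \ref{prop Px vs Py} machinery.
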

%
%
By Lemma \ref{lem dotin dotinc}(4), $In(y^*,y^*)$ holds,
and this together with Lemma \ref{lem basic properties of N(y)}(1) and the fact that $-1\in N$ implies that
%
%
\begin{equation}\label{eq N(g)=N(-g)}
N(g)=N(-g),\qquad\text{for all $g\in Ny$}.
\end{equation}
%
%
\begin{lemma}\label{lem N(a)subseteq N(b)}
Let $a \in xN,$ $b \in yN$ such that $N(a)\subseteq N(b)$ and let $n\in N\sminus N(b)$.
Then
\[
N(a)\subseteq N(a + n) \cap N(b + n).
\]
\end{lemma}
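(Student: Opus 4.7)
The plan is to exploit two applications of Lemma~\ref{lem N(x+y)}(2) to derive the identity
\[
N(a)\cap N(b+n)\;=\;N(a+b+n)\;=\;N(a+n)\cap N(b),
\]
which reduces the lemma to a single inclusion, and then to invoke Proposition~\ref{prop In(x*,y*)} together with Lemma~\ref{lem N(x+y)}(3).

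\textbf{Setup.} Take $m\in N(a)$; then $a+m\in N$ and, by $N(a)\subseteq N(b)$, also $b+m\in N$. Since $N(a)\subseteq N(b)$ and $n\in N\sminus N(b)$ one has $n\notin N(a)$, so $a+n,\,b+n\in D^\times\sminus N$. Remark~\ref{rem a-b in N}(1) yields $d(a^*,(a+n)^*)\le 1$ and $d(b^*,(b+n)^*)\le 1$; combining with $d(a^*,b^*)\ge 5$ and the triangle inequality produces the key estimates
\[
d(a^*,(b+n)^*)\ge 4,\qquad d((a+n)^*,b^*)\ge 4,\qquad d((a+n)^*,(b+n)^*)\ge 3.
\]
The first two estimates allow Lemma~\ref{lem N(x+y)}(2) to be applied to the pairs $(a,b+n)$ and $(a+n,b)$, producing the displayed identity.

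\textbf{Reduction to one inclusion.} The identity, together with $N(a)\subseteq N(b)$, shows that the two inclusions $N(a)\subseteq N(b+n)$ and $N(a)\subseteq N(a+n)$ are equivalent: if $N(a)\subseteq N(b+n)$ then $N(a)=N(a)\cap N(b+n)=N(a+n)\cap N(b)\subseteq N(a+n)$, and conversely if $N(a)\subseteq N(a+n)$ then $N(a)=N(a)\cap N(b)\subseteq N(a+n)\cap N(b)=N(a)\cap N(b+n)\subseteq N(b+n)$. So it suffices to prove $N(a)\subseteq N(b+n)$.

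\textbf{Main step and obstacle.} Since $d(a^*,(b+n)^*)\ge 4$, Proposition~\ref{prop In(x*,y*)} gives $\In(a^*,(b+n)^*)$, so either $N(a)\subseteq N(b+n)$ (which finishes the proof) or $N(b+n)\subsetneq N(a)$. Assuming the latter for contradiction, Lemma~\ref{lem N(x+y)}(3) applied with $x=b+n$, $y=a$ shows that $d((a+b+n)^*,(b+n)^*)\le 2$, since otherwise $N(a+b+n)=N(a)\subseteq N(b+n)\cap N(-(b+n))$ would contradict the strict inclusion; the symmetric argument with $x=a+n$, $y=b$, combined with the equivalence above, yields $d((a+b+n)^*,(a+n)^*)\le 2$ as well. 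The main obstacle is to now extract a contradiction from these combined bounds. The plan is to introduce the auxiliary element $a+b+2n=(a+n)+(b+n)$, to use $n\notin N(a+b+n)=N(a)\cap N(b+n)$ and Remark~\ref{rem a-b in N}(1) to get $d((a+b+n)^*,(a+b+2n)^*)\le 1$, and then to apply Lemma~\ref{lem N(x+y)}(3) to the pair $(a+n,\,b+n)$; the extra slack provided by $d(a^*,b^*)\ge 5$ (rather than the $\ge 4$ used in \S\ref{sect diam 3}) should propagate through the triangle inequalities to force an $N$-set identity incompatible with $N(b+n)\subsetneq N(a)$, completing the contradiction.
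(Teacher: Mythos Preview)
Your setup and reduction are correct: the identity $N(a)\cap N(b+n)=N(a+b+n)=N(a+n)\cap N(b)$ is valid, and the two desired inclusions are indeed equivalent. The problem is the ``Main step and obstacle'' paragraph, which is not a proof but a sketch of a hoped-for contradiction. You obtain the two distance bounds $d((a+b+n)^*,(b+n)^*)\le 2$ and $d((a+b+n)^*,(a+n)^*)\le 2$, introduce $a+b+2n$, and then assert that ``the extra slack \ldots\ should propagate through the triangle inequalities'' to a contradiction. No concrete chain of inequalities or $N$-set identities is given, and I do not see how to complete this line: the bounds you have are all upper bounds on distances from $(a+b+n)^*$, which do not combine via the triangle inequality to any lower bound contradicting $d(a^*,b^*)\ge 5$.

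The paper avoids any contradiction argument by exploiting an ambient fact you have not used: in \S6 one has $In(y^*,y^*)$ (from $Inc(y^*,x^*)$), hence $N(g)=N(-g)$ for all $g\in Ny$, so in particular $N(b)=-N(b)$. This forces $-n\notin N(a)$ (else $-n\in N(b)=-N(b)$, i.e.\ $n\in N(b)$), so $a-n\notin N$ and $d((a-n)^*,(b+n)^*)\ge 3$. Now decompose $a+b=(a-n)+(b+n)$ and apply Lemma~\ref{lem N(x+y)}(2) to get
\[
N(a)=N(a)\cap N(b)=N(a+b)=N(a-n)\cap N(b+n)\subseteq N(b+n),
\]
which is the inclusion you wanted, obtained directly. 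Running the same computation with $-a$ in place of $a$ (legitimate since $-1\in N$ and $N(-a)=-N(a)\subseteq -N(b)=N(b)$) gives $N(-a)\subseteq N(-a-n)$, i.e.\ $N(a)\subseteq N(a+n)$. The missing idea, then, is to shift $n$ to the \emph{other} summand via $a+b=(a-n)+(b+n)$, which is exactly what the symmetry $N(b)=-N(b)$ permits.
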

\begin{proof}
Since $N(b)=N(-b)$ we have $N(b)=-N(b)$ and $N(-a)\subseteq N(b)$.

If $a-n\in N$, then $-n\in N(a)\subseteq N(b)=-N(b)$, a contradiction.
Thus $a-n\not\in N$, so by Remark \ref{rem a-b in N}(1), $d(a^*,(a-n)^*)\leq1$.
Also, $b+n\not\in N$ and we similarly have $d((b+n)^*,b^*)\leq1$.
As $d(a^*,b^*)\geq5$ we conclude that $d((a-n)^*,(b+n)^*)\geq3$.
Hence, by Lemma \ref{lem N(x+y)}(2), $N(a+b)=N(a-n)\cap N(b+n)$.
In addition, Lemma \ref{lem N(x+y)}(2) implies that $N(a+b)=N(a)\cap N(b)=N(a)$.
Therefore
\begin{equation}\label{eq N(a)=N(a-n) cap N(b+n)}
N(a)=N(a-n)\cap N(b+n).
\end{equation}
The same argument, with $-a$ in place of $a$, shows that
%
%
\begin{equation}
\label{eq N(-a)=N(-a-n) cap N(b+n)}
N(-a)=N(-a-n)\cap N(b+n).
\end{equation}
%
%
Altogether equations (\ref{eq N(a)=N(a-n) cap N(b+n)}) and (\ref{eq N(-a)=N(-a-n) cap N(b+n)}) imply the assertion.
\end{proof}
\medskip

\begin{proof}[Proof of Theorem \ref{thm level 0 in diam ge 5}]
We denote $\le\,=\,\le_{y^*}$ and $\gvp=\gvp_{y^*}$.
By Theorem \ref{thm existence of VL in diam 4}
it is enough to show that $\gvp$ is a strongly leveled map having s-level $0,$ i.e.
%
%
\begin{equation}\label{eq level 0}
1 + N_{> 0} \subseteq N_{\le 0}
\end{equation}
%
%
(we notice that since $\gC_{y^*}$ is totally ordered, we have $\pm N_{>0} = N_{> 0},$
hence equation (SL) in subsection \ref{sub vlm} of the introduction
simplifies to equation \eqref{eq level 0}).

We first show that
\begin{equation}\label{eq 1 + N > 0 subseteq N}
1 + N_{> 0} \subseteq N.
\end{equation}
To this end let $n \in N_{> 0}$.
It follows from Corollary \ref{cor properties lf le y}(1) that there exists $b\in \pp_{y^*}$
such that $n \notin N(b)$.
Using $Inc(y^*,x^*)$ we pick $a \in \pp_{x^*}$ such that $N(b)\supseteq N(a)$.
Since $1 \in N(a)$,  Lemma \ref{lem N(a)subseteq N(b)} shows that both $a + n + 1$ and $b + n + 1$ belong to $N$.
If $n + 1 \notin N$, then $a^*$, $(n + 1)^*$, $b^*$ would be a path in $\gD$, contrary to $d(a^* , b^*) \ge 5$.
Thus $n + 1 \in N$ proving  equation \eqref{eq 1 + N > 0 subseteq N}.

Next we note that $1-(1+a)=-a\not\in N$.
By equation \eqref{eq 1 + N > 0 subseteq N},   $-(1 + a)\in N_{\le 0}$,  whence also $1+a\in N_{\le 0}$
 (see equation \eqref{eq N(g)=N(-g)}).

Corollary \ref{cor properties lf le y}(1) now implies that $1+a\in N(z)$ for every $z\in\pp_{y^*}$.
Equivalently, $z+1\in N(a)$ for every such $z$.
By Lemma \ref{lem N(a)subseteq N(b)},
\[
a + (z + 1 + n) \ , \ b + (z + 1 + n) \ \in \ N.
\]
The assumption $z + 1 + n \notin N$ would lead us again to the false
conclusion that $d(a^* , b^*) \le 2$.
Thus, $z + (1+n) \in N$.
Since this is true for all $z \in \pp_{y^*}$, we see that $1+n \in N_{\le 0}$, by Corollary \ref{cor properties lf le y}(1), proving  equation \eqref{eq level 0}.
\end{proof}

\section{Property \ETH}\label{section 3.5}
In this section we continue the notation and hypotheses
of \S 4.
Recall that we denote by $F$ the center of $D$.
Throughout this section $\gS\le {\rm Sym}(\Dt)$ is a permutation group on $\Dt$.
For $x\in\Dt$ and $\gs\in\gS,$ let $\gs(xN):=\{\gs(xn)\mid n\in N\}$.
We assume that $\gs(N)=N,$ for all $\gs\in\gS,$ and
\[
\gs(xN)=\gs(x)N,\quad\text{for all }x\in\Dt\text{ and }\gs\in\gS,
\]
that is $\gs(x^*)=\gs(x)^*,$ for all $x\in\Dt$ and $\gs\in\gS$.
We use the letter $\gS$ to also denote the group
of permutations of $\gD$ induced by $\gS$.
%

\begin{remark}
Note that if $\Sigma \le\Aut(\Dt)$ is a subgroup that normalizes
$N,$ then of course $\Sigma$ satisfies our hypothesis.
\end{remark}

\begin{Def}\label{def eth}
We say that $\gD$ satisfies {\it Property \ETH\ with respect to $x^*, y^*\in\gD,$
the subgroup $\gS\le {\rm Sym}(\Dt),$ and the subgroup $M\subseteq N,$} if $-1\in M$ and
\begin{enumerate}
%
\item
$\gs(a+k)^*=(\gs(a)+k)^*$, for every $\gs\in\gS,$ $a\in \Dt\sminus N$ and $k\in M$.

\item
$d(x^*, y^*)\ge 3$.

\item
For {\it every} $r^*, s^*\in\gD$ such that $x^*, r^*, s^*, y^*$
is a path in $\gD,$ there exists $\gs\in\gS$ such that  $d(\gs(x^*),y^*)\ge 3,$ and
$\gs(x)^*, \gs(r)^*, s^*, y^*,$ is {\it not} a path in $\gD$.
\end{enumerate}
\end{Def}

The purpose of this section is to prove:

\begin{thm}\label{thm 3.5}
Assume that
$\gD$ satisfies property \ETH\ with respect to $x^*, y^*\in\gD,$
the subgroup $\gS\le{\rm Sym}(\Dt),$ and the subgroup  $M\subseteq N$.
Then, after perhaps interchanging $x^*$ and $y^*,$ the map
$\gvp_{y^*}\colon N\to\gC_{y^*}$ is a strongly leveled map
such that the subgroup $\gvp_{y^*}(M)$ of\, $\gC_{y^*}$ is totally ordered.
\end{thm}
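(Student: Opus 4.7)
The first conclusion is immediate: since condition (2) of Property \ETH\ ensures $d(x^*,y^*)\ge 3$, Theorem \ref{thm existence of s-level in diam 3} already gives that $\gvp_{y^*}$ is a strongly leveled map. For the second conclusion, the plan is to reduce to showing $\In(x^*,y^*)$. By Lemma \ref{lem In(y,y)}, the total ordering of $\gvp_{y^*}(M)$ follows from $\In(y^*,y^*)$. Once $\In(x^*,y^*)$ is in hand, Lemma \ref{lem dotin dotinc}(1) yields $\Inc(x^*,y^*)$ or $\Inc(y^*,x^*)$; interchanging $x^*$ and $y^*$ if necessary, Lemma \ref{lem dotin dotinc}(4) then delivers $\In(y^*,y^*)$.

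The crux is therefore to prove $\In(x^*,y^*)$ using Property \ETH\ to overcome the fact that Proposition \ref{prop In(x*,y*)} nominally requires $d(x^*,y^*)\ge 4$. I will proceed by contradiction: if there exist $a\in Nx$, $b\in Ny$ and $m,n\in M$ with $a+m,\,b+n\in N$ and $a+n,\,b+m\notin N$, then after rescaling $(a,b,m)\mapsto(n^{-1}a,\,n^{-1}b,\,n^{-1}m)$ (valid because $n\in M$) I may take $n=1$. Setting $u:=a+1$ and $v:=b+m$, both are vertices of $\gD$ with $d(x^*,u^*)\le 1$ and $d(v^*,y^*)\le 1$ by Remark \ref{rem a-b in N}(1). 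In the main sub-case where further $d(u^*,v^*)\le 1$, the condition $d(x^*,y^*)\ge 3$ forces the four vertices $x^*,u^*,v^*,y^*$ to be distinct and each consecutive distance to equal $1$, so they form a path of length $3$. This triggers condition (3) of Definition \ref{def eth}: it produces $\gs\in\gS$ with $d(\gs(x)^*,y^*)\ge 3$ and $\gs(x)^*,\gs(u)^*,v^*,y^*$ not a path. Condition (1) applied to the $M$-shifts $a+1$ and $a+m$ gives $\gs(u)^*=(\gs(a)+1)^*$ together with $\gs(a)+1\notin N$ and $\gs(a)+m\in N$, so the pair $(m,1)$ continues to witness incomparability of $\N_M(\gs(a))$ and $\N_M(b)$, while the ``not a path'' condition forces $d(\gs(u)^*,v^*)\ge 2$.

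The main obstacle is converting this enhanced distance into a contradiction. The intended mechanism is to combine $d((\gs(a)+1)^*,(b+m)^*)\ge 2$ with Lemma \ref{lem N(ab)} to simulate the ``second inclusion'' $N(cd)\subseteq N(d)$ that drives the proof of Proposition \ref{prop In(x*,y*)}, thereby deducing $\N_M(\gs(a))\subseteq\N_M(b)$ and contradicting $m\in\N_M(\gs(a))\sminus\N_M(b)$. The complementary sub-case $d(u^*,v^*)\ge 2$ should yield to direct application of Proposition \ref{prop In(x*,y*)}-style reasoning on the pair $(u,v)$, which already enjoys a distance sufficient to invoke Lemma \ref{lem N(ab)}(1) in its full strength.
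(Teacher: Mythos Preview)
Your reduction to $\In(x^*,y^*)$ is correct and matches the paper: once $\In(x^*,y^*)$ is established, Lemmas \ref{lem dotin dotinc}(1),(4) and \ref{lem In(y,y)} finish the job (and Theorem \ref{thm existence of s-level in diam 3} makes \emph{both} $\gvp_{x^*}$ and $\gvp_{y^*}$ strongly leveled, so the interchange is harmless). The contradiction setup and the construction of the intermediate vertices are also essentially right.

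The gap is in the endgame. Your case split on whether $d(u^*,v^*)\le1$ leaves you, in each branch, holding only a distance-$\ge2$ bound between derived vertices, and this is far too weak: every part of Lemma \ref{lem N(ab)} requires distance $\ge3$ or $\ge4$, and nothing in the toolkit converts distance $\ge2$ into inclusions of $N(\cdot)$-sets. In particular, your claim that the complementary sub-case ``already enjoys a distance sufficient to invoke Lemma \ref{lem N(ab)}(1) in its full strength'' is wrong---that lemma needs distance $\ge4$, and Proposition \ref{prop In(x*,y*)} genuinely uses this. Likewise, in the main sub-case, knowing only $d(\gs(u)^*,v^*)\ge2$ gives no purchase on $\N_M(\gs(a))$ versus $\N_M(b)$.

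The paper avoids the case split entirely by proving the middle link \emph{unconditionally}: for every $\gs\in\gS$ with $d(\gs(x)^*,y^*)\ge3$ (including $\gs=\mathrm{id}$) it shows $d(\gs(r)^*,s^*)\le1$, so $\gs(x)^*,\gs(r)^*,s^*,y^*$ is always a path, directly contradicting condition (3) of Definition \ref{def eth}. With the normalization $-1\in\N(a)\sminus\N(b)$, $k\in\N(b)\sminus\N(a)$, $r=a+k$, $s=b-1$, the key steps are: condition (1) transfers $-1\in N(a)$ to $-1\in N(\gs(a))$; Lemma \ref{lem N(ab)}(3) (needing only $d(\gs(a)^*,b^*)\ge3$) then gives $k\in N(b)\subseteq N(b\gs(a))$; hence $b(\gs(a)+k)-(b-1)k=b\gs(a)+k\in N$, so (V1) yields $d(b^*\gs(r)^*,s^*)\le1$; combined with $d(b^*,s^*)\le1$, the full axiom (V3) gives $d(\gs(r)^*,s^*)\le1$. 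This is exactly the place (flagged in Remark \ref{rem a-b in N}(3)) where (V3) is used beyond its weak form (V3$'$).
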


 \begin{lemma}\label{lem not eth}
 Suppose that $\Delta$ satisfies property \ETH \ with respect to $x^*,y^*\in\Delta,$ $\gS$ and $M$.
 Then $\In(x^*,y^*)$ holds.
 \end{lemma}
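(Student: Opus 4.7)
The plan is to argue by contradiction. Suppose $\In(x^*,y^*)$ fails; then there exist $a\in Nx$, $b\in Ny$ and witnesses $m\in\N_M(a)\sminus\N_M(b)$ and $m'\in\N_M(b)\sminus\N_M(a)$, meaning $a+m,\,b+m'\in N$ while $a+m',\,b+m\notin N$. Since $\N_M\subseteq N$, the same quadruple also witnesses failure of $In(x^*,y^*)$, so Proposition~\ref{prop In(x*,y*)} (contrapositive) rules out $d(x^*,y^*)\geq 4$; combined with part~(2) of property~\ETH\ this forces $d(x^*,y^*)=3$, so length-$3$ paths from $x^*$ to $y^*$ actually exist in $\Delta$.

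The candidate path I aim to use is $x^*,r^*,s^*,y^*$ with $r:=a+m'$ and $s:=b+m$, both of which lie in $\Dt\sminus N$. The outer edges follow at once from (V1): $r^*\sim a^*=x^*$ since $r-a=m'\in N$, and $s^*\sim b^*=y^*$ since $s-b=m\in N$. The crux of the proof is to establish the middle edge $r^*\sim s^*$: by (V1) together with $-1\in M\subseteq N$ it suffices to show $r+s\in N$, where $r+s=(a+m')+(b+m)=(a+m)+(b+m')$ is a sum of two elements of $N$. Showing that this particular sum lies in $N$ -- or else obtaining $r^*\sim s^*$ from another V-graph axiom (for instance via a suitable application of (V3$'$)) -- and doing so in a form that depends only on the $N$-containment pattern of the $M$-shifts of $a$ and $b$, is the main technical obstacle; it may require first adjusting the representatives $a\in Nx$, $b\in Ny$ so that the witnesses $(m,m')$ are conveniently normalized.

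Granting the middle edge, apply~\ETH(3) to the path $x^*,r^*,s^*,y^*$ to obtain $\sigma\in\gS$ with $d(\sigma(x)^*,y^*)\geq 3$ such that the sequence $\sigma(x)^*,\sigma(r)^*,s^*,y^*$ is \emph{not} a path. By~\ETH(1) applied to $r=a+m'$ we have $\sigma(r)^*=(\sigma(a)+m')^*$, and the same property combined with $\sigma(N)=N$ gives $\N_M(\sigma(a))=\N_M(a)$; in particular $\sigma(a)+m\in N$ and $\sigma(a)+m'\notin N$. Hence the tuple $(\sigma(a),b,m,m')$ displays exactly the additive pattern of $(a,b,m,m')$, so the very argument that produced $r^*\sim s^*$ -- provided it is phrased invariantly in this pattern -- also produces $(\sigma(a)+m')^*\sim s^*$. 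Together with $\sigma(x)^*\sim\sigma(r)^*$ (again by (V1)) and $s^*\sim y^*$, we conclude that $\sigma(x)^*,\sigma(r)^*,s^*,y^*$ \emph{is} a path, contradicting~\ETH(3). The hard part is therefore isolating an invariant verification of the middle edge; once that is in hand, propagation under $\sigma$ via~\ETH(1) is automatic and the contradiction closes the proof.
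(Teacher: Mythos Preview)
Your overall architecture is exactly right: contradiction, build a length-$3$ path $x^*,r^*,s^*,y^*$ using the witnesses to the failure of $\In(x^*,y^*)$, then use \ETH(3) together with the $\sigma$-invariance coming from \ETH(1) to reach a contradiction. The outer edges and the invariance $\N_M(\sigma(a))=\N_M(a)$ are handled correctly.

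However, the middle edge is a genuine gap, and the approach you sketch for it will not close. There is no reason for $(a+m)+(b+m')$ to lie in $N$: it is a sum of two elements of $N$, and such sums generically land outside $N$. Nor can (V$3'$) alone rescue this step; indeed the paper explicitly notes (Remark~\ref{rem a-b in N}(3)) that \S\ref{section 3.5} is the one place where the \emph{full} axiom (V3) is needed.

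The paper's key idea is a normalization you only hint at. After picking $m\in\N_M(a)\sminus\N_M(b)$, replace $a,b$ by $-m^{-1}a,-m^{-1}b$ so that the witness becomes $-1\in\N_M(a)\sminus\N_M(b)$; then take $k\in\N_M(b)\sminus\N_M(a)$ and set $r=a+k$, $s=b-1$. For any $\sigma$ with $d(\sigma(x)^*,y^*)\ge 3$, \ETH(1) gives $-1\in N(\sigma(a))$, so Lemma~\ref{lem N(ab)}(3) (applied with $b,\sigma(a)$ and $\varepsilon=-1$) yields $N(b)\subseteq N(b\,\sigma(a))$; in particular $b\,\sigma(a)+k\in N$. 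The algebraic identity
\[
b(\sigma(a)+k)-(b-1)k \;=\; b\,\sigma(a)+k \in N
\]
then gives, via (V1), that $d\bigl(b^*(\sigma(a)+k)^*,(b-1)^*\bigr)\le 1$. Since also $d(b^*,(b-1)^*)\le 1$, the full (V3) cancels the factor $b^*$ and delivers $d\bigl((\sigma(a)+k)^*,(b-1)^*\bigr)\le 1$, which is exactly the middle edge $\sigma(r)^*\sim s^*$. This argument is visibly invariant in the sense you require, so the contradiction with \ETH(3) follows.

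In short: the missing ingredient is the normalization to $-1$ followed by Lemma~\ref{lem N(ab)}(3) and a genuine use of (V3); once you insert this, your outline becomes the paper's proof.
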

 \begin{proof}
 Suppose that $\In(x^*,y^*)$ does not hold.
 Then there exist $a\in xN$ and $b\in yN$ such that $\N(a)\not\subseteq\N(b)$ and $\N(b)\not\subseteq\N(a)$.
 Pick $m\in\N(a)\sminus\N(b)$.
 We may replace $a, b$ by $-m^{-1}a, -m^{-1}b$ respectively to assume that
 \[
 -1\in \N(a)\sminus \N(b).
 \]
 Also pick
 \[
 k \in \N(b)\sminus \N(a),
 \]
 and set $r=a+k$ and $s=b-1$.
 Then $r,s\not\in N$.

 Take $\gs\in\Sigma$ such that $d(\gs(x)^*,y^*)\ge 3$.
 We show that
 \[
 \tag{$\calp$}
 \gs(x)^*, \gs(r)^*, s^*, y^*
 \]
 is a path  in $\Delta$.
 In particular, applying this for $\gs={\rm id}$, we obtain from (2) of Definition \ref{def eth} that $x^*,r^*,s^*,y^*$ is a path in $\Delta$, which is a contradiction to (3) of Definition \ref{def eth}.

 Now since $d(\gs(x)^*,y^*)\ge 3$, it is enough to show that in each step in $\mathcal{P}$ the distance
 is {\it at most} $1$.

 Indeed, by (1) of Definition \ref{def eth}, $\gs(r)^*=\gs(a+k)^*=(\gs(a)+k)^*$.
 Since $k\in M\subseteq N$, (V1) implies that
 \[
 d(\gs(x)^* , \gs(r)^*)=d(\gs(a)^* , (\gs(a)+k)^*)\le 1.
 \]

 Next, we have  $a-1\in N$, which by  (1) of Definition \ref{def eth} implies that
 $(\gs(a)-1)^*=\gs(a-1)^*=1^*$.
 Therefore $-1\in N(\gs(a))$.
 Since $d(\gs(a)^*,b^*)=d(\gs(x)^*,y^*)\ge 3$, we deduce from Lemma 4.4(3)  that
 $k\in N(b)\subseteq N(b\gs(a))$, and hence $b\gs(a)+k\in N$.
 Therefore, by (V1),
 \[
 d(b^*(\gs(a) + k)^* , (b - 1)^*)=\\
 d((b\gs(a) + k) + (b - 1)k)^* , ((b - 1)k)^*)\le 1.
 \]
 In addition $d(b^*, (b-1)^*) \le 1$, so by (V3),
 \[
 d(\gs(r)^*,s^*)=d((\gs(a) + k)^* , (b - 1)^*)\le 1.
 \]

 Finally, (V1) implies that
 \[
 d(s^* , y^*)=d((b - 1)^* , b^*)\le1,
 \]
 as required.
 \end{proof}

\medskip

\noindent
\begin{proof}[Proof of Theorem \ref{thm 3.5}]
Since $d(x^*,y^*)\geq3$, Theorem \ref{thm existence of s-level in diam 3} implies that both $\gvp_{y^*}$
and $\gvp_{x^*}$ are strongly leveled maps.
By Lemma  \ref{lem not eth}, $\In(x^*, y^*)$ holds.
In view of Lemma \ref{lem dotin dotinc}(1), we may assume (after perhaps
interchanging $x^*$ and $y^*$) that $\Inc(y^*, x^*)$ holds.
Then Lemma \ref{lem dotin dotinc}(4) shows that  $\In(y^*, y^*)$ also holds.
By Lemma \ref{lem In(y,y)}, the subgroup  $\gvp_{y^*}(M)$ of $\gC_{y^*}$ is totally ordered.
\end{proof}

Using Theorem \ref{thm 3.5} we will prove at the end of \S10 (compare with \cite[Theorem 1, p.~931]{RSS}):

\begin{thm}\label{thm v-3.5}
Let $D$ be a finite-dimensional separable (but not
necessarily central)
division algebra over an infinite field $K$ of finite transcendence degree over its prime field, and let $N \subseteq D^{\times}$ be a normal
subgroup of finite index containing $-1$. Assume that
$D^{\times}/N$ supports a V-graph $\gD$ that satisfies property \ETH\
with respect to $x^*, y^*\in\gD,$ the subgroup $\gS\le{\rm Sym}(\Dt),$ and the subgroup $M=N\cap k^{\times}$.

Then there exists a non-empty finite set $\widetilde{T}$ of non-trivial valuations
of $D$ such that $N$ is open in $D^{\times}$ with respect to the $\widetilde{T}$-adic topology.
\end{thm}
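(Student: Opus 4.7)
The plan is to combine Theorem~\ref{thm 3.5} with the machinery behind Theorem~C (in particular Theorem~\ref{thm val2}, which is the main tool underlying Theorem~C(3)): Theorem~\ref{thm 3.5} supplies exactly the algebraic input needed by Theorem~C, so essentially no new work is required beyond invoking these two results in sequence.

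First I would apply Theorem~\ref{thm 3.5} to the data $(x^*,\, y^*,\, \Sigma,\, M)$ with $M := N \cap k^{\times}$; note that $-1 \in M$ holds automatically since $-1 \in k^{\times} \cap N$. This yields, after possibly interchanging $x^*$ and $y^*$, a strongly leveled map $\varphi := \varphi_{y^*} \colon N \to \Gamma_{y^*}$ such that the subgroup $\varphi(N \cap k^{\times}) \subseteq \Gamma_{y^*}$ is totally ordered. Because property~\ETH\ includes the condition $d(x^*, y^*) \ge 3$, the V-graph $\Delta$ has diameter at least~$3$, and $\varphi$ may be identified with the strongly leveled map produced by Theorem~B(1).

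With these outputs in hand, all hypotheses of Theorem~C are satisfied: $D$ is finite-dimensional separable over an infinite field $k$ of finite transcendence degree over its prime field, $N$ is a finite-index normal subgroup of $D^{\times}$ containing $-1$, $D^{\times}/N$ supports a V-graph of diameter $\ge 3$, and the strongly leveled map $\varphi = \varphi_{y^*}$ of Theorem~B(1) enjoys the total-orderedness of $\varphi(N \cap k^{\times})$. Invoking conclusion~(3) of Theorem~C (equivalently, applying Theorem~\ref{thm val2} directly) produces the non-empty finite set $\widetilde{T}$ of non-trivial valuations of $D$ such that $N$ is open in $D^{\times}$ in the $\widetilde{T}$-adic topology, which is the desired statement.

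The main obstacle is really a bookkeeping one: one must verify that the strongly leveled map furnished by Theorem~\ref{thm 3.5} matches the map $\varphi$ referenced in Theorem~C, and that the total-orderedness hypothesis there (on $\varphi(N \cap k^{\times})$) is indeed the same as the total-orderedness of $\varphi_{y^*}(M)$ yielded by Theorem~\ref{thm 3.5}. Both identifications are immediate because both maps are by construction $\varphi_{y^*}$ for the same $y^* \in \Delta$, and $M = N \cap k^{\times}$ by choice. All substantive analytic work is thereby deferred to Theorem~\ref{thm val2} and the valuation-theoretic material of \S\S8--10, so the present proof amounts to assembling previously established components.
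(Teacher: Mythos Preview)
Your proposal is correct and follows essentially the same approach as the paper: apply Theorem~\ref{thm 3.5} with $M=N\cap k^\times$ to obtain the strongly leveled map $\varphi_{y^*}$ with $\varphi_{y^*}(N\cap k^\times)$ totally ordered, then invoke Theorem~C. The paper's proof is a two-sentence version of exactly this argument.
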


\section{Valuations on division algebras}
\label{vals on division algebras}
In this section we recall some notion and facts on valuations
on division algebras, which will be needed in the next two sections.

We recall that a valuation on a division ring $D$ is a {\it surjective} group
homomorphism $v \colon D^{\times} \to \Gamma_v$ onto a totally
ordered group $\Gamma_v$ (the {\it value group}) such that
\[
v(x + y) \ge \min\{v(x) , v(y)\} \ \ \text{whenever} \ \ x + y \neq 0.
\]
We let
\[
\mathcal{O}_{D , v} = \{ x \in D^{\times} \: \vert \: v(x) \ge 0\} \cup \{ 0 \}
\]
denote the corresponding valuation ring, and $\mathfrak{m}_{D , v}$ its maximal ideal.
More generally, for $\delta\in (\Gamma_v)_{\geq 0}$, we define the following two-sided ideal of
$\mathcal{O}_{D , v}$:
\[
\mathfrak{m}_{D , v}(\delta) = \{ x \in D^{\times} \: \vert \: v(x)
> \delta \} \: \cup \: \{ 0 \}
\]
(so that $\mathfrak{m}_{D , v} = \mathfrak{m}_{D , v}(0)$).


Recall that the {\it $v$-adic topology} on $D$ is the ring topology which has the ideals
$\mathfrak{m}_{D , v}(\delta)$ for $\delta \in (\Gamma_v)_{\geq0}$ as
 a fundamental system of neighborhoods of zero (see \S 5, n$^{\circ}$1 in \cite[Ch.\ 6]{Bo}).
This topology turns $D^{\times}$ into a topological group, and the openness of a subgroup $N \subseteq
D^{\times}$ in the $v$-adic topology is equivalent to the existence of $\delta \in
(\Gamma_v)_{\geq 0}$ such that $1 + \mathfrak{m}_{D , v}(\delta) \subseteq N$.

More generally, given a finite set $T = \{ v_1, \ldots , v_r \}$ of valuations of $D$,
and $\delta_i \in (\Gamma_{v_i})_{\ge 0}$, $i=1, \ldots , r$, we define
\[
\mathfrak{m}_{D,T}(\delta_1, \ldots , \delta_r) = \bigcap_{i = 1}^r
\mathfrak{m}_{D,v_i}(\delta_i).
\]
Clearly, $\mathfrak{m}_{D,T}(\delta_1, \ldots , \delta_r)$ is a two-sided
ideal of $\mathcal{O}_{D , T} = \bigcap_{v \in T} \mathcal{O}_{D,v}$.
The ideals $\mathfrak{m}_{D,T}(\delta_1, \ldots , \delta_r)$ form a
fundamental system of neighborhoods of zero for a topology on $D$
compatible with the ring structure;
this topology will be called $T$-{\it adic}.
Thus, a subgroup $N \subseteq D^{\times}$ (resp.~a subring $R\subseteq D$)
is $T$-adically open iff it contains the congruence subgroup $1 +
\mathfrak{m}_{D,T}(\delta_1, \ldots , \delta_r)$ (resp.~the ideal $\mathfrak{m}_{D,T}(\delta_1, \dots , \delta_r)$)
for some $\delta_i \in (\Gamma_{v_i})_{\ge 0}$, $i = 1, \ldots , r$.

Next let $\Gamma=(\Gamma,\leq)$ be a totally ordered commutative group.
We recall that the {\it height} (also called {\it rank}) of $\Gamma$ is  the supremum of all non-negative integers $r$
such that there exist epimorphisms
\[
\Gamma=\Gamma_0 {\smash{\mathop{\longrightarrow}\limits^{\mu_1}}}
\Gamma_1{\smash{\mathop{\longrightarrow}\limits^{\mu_2}}} \cdots\rightarrow^{}\Gamma_{r-1}
{\smash{\mathop{\longrightarrow}\limits^{\mu_r}}}\Gamma_r=\{0\}
\]
of totally ordered groups, where $\mu_1,\ldots,\mu_r$ have non-trivial kernels.
The group $\Gamma$ has height $\leq1$ if and only if it embeds in the ordered additive group of $\mathbb{R}$
(\cite{Efr_book}, Th.\ 2.5.2).
We note that if $\Gamma$ is commutative and has finite height, then there is  an epimorphism $\mu\colon \Gamma\to\bar\Gamma$,
with $\bar\Gamma$ of height one.

The {\it height} of a valuation  $v$ is defined to be the height of its value group $\Gamma_v$
(cf. \cite{Bo}, ch.~6, \S4, n$^{\circ}$4 or \cite{Efr_book}, \S2.2, for a discussion on the height of a totally
ordered group/valuation).

The following definition describes a useful connection between leveled maps and valuations.

\begin{Def}
\label{def assoc val}
Let $N$ be a subgroup of $D^\times$, $\Gamma$ a partially ordered group,  $\varphi \colon N \to
\Gamma$ a group homomorphism, and  $v \colon D^{\times} \to \Gamma_v$ a valuation.
We say that $v$ is \emph{associated} with $\varphi$ if $\varphi(n)\ge 0$ implies $v(n)\ge 0$, for all $n\in N$.
\end{Def}

\begin{remarks}[\cite{RS}, Remarks 2.5]\label{rem assoc val}
\begin{enumerate}
\item
Given a non-trivial homomorphism $\varphi \colon N \to \gC$, the non-trivial valuation
$v\colon \Dt \ra \gC_v$ is associated with $\varphi$ if and only if
there exists a non-trivial homomorphism $\theta \colon \varphi(N) \to
\Gamma_v$ of ordered groups such that the square
\[
\xymatrix{
N\ar[r]^{\varphi}\ar@{^{(}->}[d]_{\iota} & \varphi(N)\ar[d]^{\theta}\\
D^{\times}\ar[r]^v &\Gamma_v\\
}
\]
in which $\iota$ is the inclusion map, commutes.
In fact, this was the original definition used in \cite{RS}, \cite{RSS}, \cite{R}.
\item
If $v\colon D^\times\to \Gamma_v$ is a valuation and $\mu \colon \Gamma_v \to \bar\Gamma$
is a surjective homomorphism of totally ordered groups,  then $\bar v = \mu \circ v\colon D^\times\to \bar\Gamma$
is also a valuation.
Also, if $v$ is associated with $\varphi\colon N\to\Gamma$ with respect to $\theta$, then
$\bar v$ is also associated with $\varphi$, with respect to $\mu\circ\theta$.
Further, every $v$-adically open subgroup of $D^\times$ is also $\bar v$-adically open.
\end{enumerate}
\end{remarks}

Given valuations $u,u'$ on $D$ with value groups
$\Gamma_u,\Gamma_{u'}$, respectively,
we say that $u'$ is {\it coarser} than $u$  if there is an epimorphism of
totally ordered groups $\mu\colon\Gamma_u\to\Gamma_{u'}$
such that $u'=u\circ\mu$.

\begin{lemma} \label{lemma on coarsenings}
Let $D$ be a division ring which is finite-dimensional over its center.
Let $r\geq2$ and let $w_1,\ldots, w_r$ be distinct valuations of height $1$ on $D$.
Let $N_1,\ldots,N_r$ be proper subgroups of $D^\times$ which are open in the
$w_i$-adic topologies, $i=1,2,\ldots,r$, respectively.
Let $N=N_1\cap\cdots\cap N_r$.
Then $N$ is not $u$-adically open for any nontrivial valuation $u$ on $D$.
\end{lemma}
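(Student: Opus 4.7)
The plan is to argue by contradiction. Assume $N$ is $u$-adically open for some non-trivial valuation $u$ of $D$, so that $1+\mathfrak{m}_{D,u}(\delta)\subseteq N$ for some $\delta\in(\Gamma_u)_{\geq 0}$. The strategy is to exhibit an element of $1+\mathfrak{m}_{D,u}(\delta)$ lying outside $N_i$ for a suitable index $i$, contradicting $N\subseteq N_i$. Such an element will be supplied by the approximation theorem for independent valuations, provided we can locate an index $i$ for which $u$ is independent of $w_i$.

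The first step is to find such an index. Recall that two valuations are \emph{dependent} when they share a common non-trivial coarsening, and \emph{independent} otherwise. A coarsening of $u$ corresponds to a convex subgroup $H\subseteq\Gamma_u$ (via the quotient $\pi\colon\Gamma_u\to\Gamma_u/H$, with coarsening $\pi\circ u$), and this coarsening has height one precisely when $H$ is maximal among proper convex subgroups of $\Gamma_u$. The convex subgroups of any totally ordered group form a chain under inclusion---if $H_1\not\subseteq H_2$ and $h\in H_1\sminus H_2$, convexity of $H_2$ forces $|g|<|h|$ for every $g\in H_2$, and convexity of $H_1$ then gives $g\in H_1$---so any two maximal proper convex subgroups of $\Gamma_u$ coincide. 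Since each $w_i$ has height one, $u$ is dependent on $w_i$ if and only if $w_i$ is a coarsening of $u$, corresponding to a maximal proper convex subgroup $H_i$ of $\Gamma_u$. If $u$ were dependent on every $w_i$, the $H_i$'s would all coincide, forcing $w_1=\cdots=w_r$ and contradicting pairwise distinctness together with $r\geq 2$. After reindexing we may therefore assume that $u$ is independent of $w_1$.

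The second step uses approximation. Since $N_1$ is $w_1$-open so is each of its cosets, hence so is the complement $\Dt\sminus N_1$. As $N_1$ is proper, pick $y\in\Dt\sminus N_1$ and choose $\eta\in(\Gamma_{w_1})_{\geq 0}$ with $y(1+\mathfrak{m}_{D,w_1}(\eta))\subseteq \Dt\sminus N_1$. By the approximation theorem for the independent pair $(u,w_1)$ on $D$---a standard result for valuations on a division algebra finite-dimensional over its center, which may also be obtained by descending to $F=Z(D)$ and applying the classical commutative version to the independent height-one restrictions $u|_F$ and $w_1|_F$---there exists $x\in\Dt$ with
\[
u(x-1)>\delta\quad\text{and}\quad w_1(x-y)>w_1(y)+\eta.
\]
The first inequality puts $x\in 1+\mathfrak{m}_{D,u}(\delta)\subseteq N\subseteq N_1$; the second gives $w_1(y^{-1}x-1)=w_1(y^{-1}(x-y))>\eta$, so that $x\in y(1+\mathfrak{m}_{D,w_1}(\eta))\subseteq \Dt\sminus N_1$. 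This contradiction finishes the proof.

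I expect the main conceptual step to be the independence argument, which requires unwinding coarsening and dependence for valuations whose value groups may be non-abelian; fortunately the chain property of convex subgroups of a totally ordered group is insensitive to commutativity, so the argument goes through verbatim in the non-commutative setting. The approximation step is then a routine application of a classical ingredient.
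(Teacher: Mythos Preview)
Your proof is correct and follows the same two-step route as the paper: first argue that $u$ must be independent of some $w_i$ (you phrase this via the chain of convex subgroups of $\Gamma_u$, the paper via the linear ordering of coarsenings of $u$; these are the same fact), then invoke weak approximation for the independent pair to derive a contradiction (the paper shows every coset $dN_1$ meets $N_1$, forcing $N_1=D^\times$, while you produce an explicit element of $N\sminus N_1$; both rest on the same approximation input, which the paper sources from Marshall's theorem for V-topologies).

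One small correction: your parenthetical suggestion that approximation on $D$ can be obtained ``by descending to $F=Z(D)$ and applying the classical commutative version to the independent height-one restrictions $u|_F$ and $w_1|_F$'' does not work as stated. The valuation $u$ is an arbitrary nontrivial valuation, so $u|_F$ need not have height one; moreover, approximating in $F$ would only produce an element $x\in F$, whereas you need $x$ to be $w_1$-close to a chosen $y\in D^\times\sminus N_1$ that may well lie outside $F$. This does not harm your main argument, since weak approximation holds directly on $D$, but you should drop or substantially rework the descent remark.
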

\begin{proof}
Assume the contrary.
The assumption on $D$ implies that $\Gamma_u$ is commutative (see \cite{JacobWadsworth}, p.\ 628).
The set of valuations on $D$ which are coarser than $u$ is linearly ordered
with respect to the coarsening relation (see e.g., \cite{Efr_book}, Prop.\ 2.1.3 and Prop.\ 2.2.1).
Since $w_1,\ldots,w_r$ have height $1$ and are distinct, none of them is coarser than the other.
Therefore at most one of them can be coarser than $u$.
Without loss of generality $w_1$ is not coarser than $u$.
Since it has rank $1$, there is no nontrivial common coarsening of $w_1$ and $u$,
i.e., they are independent valuations.

As $N_1$ contains $N$, it is open in both the $w_1$-adic topology and the $u$-adic topology.
Furthermore, for every $d\in D^\times$ the coset $dN_1$ is $u$-adically open in $D^\times$.
By the weak approximation theorem for independent valuations \cite{Marshall},
$N_1\cap dN_1\neq\emptyset$, so $d\in N_1$.
This contradicts the assumption that $N_1\neq D^\times$.
\end{proof}

\section{Valuations from strongly leveled maps}\label{sect val from s leveled maps}

The goal of this section is to show that  strongly leveled maps, under some additional assumptions, give rise to valuations.
The main result of this section is  the following theorem.
We recall from subsection \ref{sub vlm} of the Introduction the notion of a {\it leveled}, {\it strongly leveled}, and {\it strong valuation-like} map.

\begin{thm}\label{thm val}
Let $D$ be a finite-dimensional  (but not necessarily central)
 division algebra over a field $k$ of finite transcendence degree over its prime field, and let $N \subseteq D^{\times}$ be a normal subgroup containing $-1$ of finite index.
Let $\varphi \colon N \to \Gamma$ be a strongly leveled map, where $(\Gamma,\le)$ is a partially ordered group,
such that the subgroup $\varphi(N\cap k^{\times})$ of $\Gamma$ is totally ordered.
Then:
\begin{enumerate}
\item[(a)]
the restriction $\varphi_k =\varphi \restr_{N \cap k^{\times}}$ is a strong valuation-like map;
\item[(b)]
there exists a height one valuation $v$ of $k$ associated with
$\varphi_k$ such that $N\cap k^{\times}$ is open in $k^\times$ in the $v$-adic topology.
\end{enumerate}
\end{thm}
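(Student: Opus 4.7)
My plan is to prove part (a) first, reducing it to a key existence statement about elements of $N\cap k^\times$, and then to derive part (b) by a standard valuation-ring construction.

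For (a), since $\varphi(N\cap k^\times)$ is totally ordered by hypothesis, I treat $\varphi_k$ as landing in the totally ordered subgroup $\Gamma':=\varphi(N\cap k^\times)\subseteq\Gamma$, so it suffices to verify the strong leveled condition (SL) for $\varphi_k$. Let $\alpha$ be an $s$-level of $\varphi$ in $\Gamma$. The key reduction is: if one can produce $c\in N\cap k^\times$ with $\varphi(c)\geq\alpha$ in $\Gamma$, then setting $\alpha_k:=\varphi(c)$ yields (SL) for $\varphi_k$, because $(N\cap k^\times)_{>\alpha_k}\subseteq N_{>\alpha}$, and the strong leveled property of $\varphi$ applied to such $m$ (which still lie in $k^\times$, so $1\pm m\in N\cap k^\times$ when nonzero, using $-1\in N$) gives $\varphi(1\pm m)\leq 0$. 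Non-emptiness of $(N\cap k^\times)_{>\alpha_k}$ is automatic, since the non-trivial totally ordered group $\Gamma'$ has no maximum.

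The substantive obstacle is producing such a $c$. Starting from $a\in N_{>\alpha}$ (which exists by (SL) for $\varphi$), I intend to invoke the finite-dimensionality of $D$ over $k$: $a$ satisfies its minimal polynomial $m_a(x)\in k[x]$ of bounded degree, and the constant term $c_0=\pm N_{k(a)/k}(a)\in k^\times$ satisfies the multiplicative identity $c_0=\pm a\cdot q(a)$ with $q(a)\in D^\times$. The power $c_0^{j_k}$, where $j_k:=[k^\times:N\cap k^\times]$, lies in $N\cap k^\times$, and its $\varphi$-value can be tracked via Proposition~\ref{prop N(a) basis} (supplying $k$-bases of $D$ inside $N(a)$ and $N(a)^{-1}$) together with the properties of $\varphi$ developed in \S3, possibly after replacing $a$ by a high power to amplify $\varphi(a)$ sufficiently. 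This extraction of a $k$-rational element whose $\varphi$-value dominates $\alpha$ is the principal difficulty.

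For (b), define
\[
\mathcal{O} := \{c\in k^\times \mid \varphi_k(c^{j_k})\leq 0\}\cup\{0\}.
\]
Multiplicative closure and the valuation-ring trichotomy ``$c\in\mathcal{O}$ or $c^{-1}\in\mathcal{O}$'' follow from the total ordering on $\Gamma'$. Additive closure is the substantive point: for $c_1,c_2\in\mathcal{O}^\times$ with $c_1+c_2\neq 0$ and (WLOG) $\varphi_k(c_1^{j_k})\leq\varphi_k(c_2^{j_k})$, one writes $(c_1+c_2)^{j_k}=c_1^{j_k}(1+c_2/c_1)^{j_k}$ and invokes (SL) for $\varphi_k$, passing to the quotient of $\Gamma'$ by the convex subgroup generated by $\alpha_k$ to absorb the gap between $0$ and $\alpha_k$. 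The resulting valuation $v$ on $k$ has value group a totally ordered abelian quotient of $\Gamma'$; since $k$ has finite transcendence degree over its prime field, $\Gamma_v$ has finite height (cf.~\cite[Ch.~6]{Bo}), and a standard coarsening yields a height one valuation. The openness of $N\cap k^\times$ in the $v$-adic topology follows directly from (SL), which gives $1+\mathfrak{m}_v(\delta)\subseteq N\cap k^\times$ for a suitable $\delta\in(\Gamma_v)_{\geq 0}$. Part (b) is thus a routine adaptation of the valuation-ring constructions of \cite{RS} and \cite{Efr4}, while part (a) is where the new work lies.
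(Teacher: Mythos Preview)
Your overall plan for (a) --- extract a $k$-rational element from the minimal polynomial of some $a\in N_{>\alpha}$ and show its $\varphi$-value can serve as an s-level for $\varphi_k$ --- matches the paper's strategy. But the step you call ``the substantive obstacle'' is a genuine gap in your proposal. You write that $\varphi(c_0^{j_k})$ ``can be tracked via Proposition~\ref{prop N(a) basis}\dots together with the properties of $\varphi$ developed in \S3, possibly after replacing $a$ by a high power.'' None of these tools applies here: Proposition~\ref{prop N(a) basis} concerns the sets $N(a)$ for $a\in D^\times\sminus N$, whereas your $a$ lies in $N$; and the constructions of \S3 are specific to the particular map $\varphi_{y^*}$, while the $\varphi$ in Theorem~\ref{thm val} is an \emph{abstract} strongly leveled map with no further structure. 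Amplifying $\varphi(a)$ by taking powers does nothing to bound the $\varphi$-value of the constant term of a minimal polynomial, since the remaining coefficients are completely uncontrolled elements of $k$.

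The paper closes this gap with a different and essential idea. It does not try to show $\varphi(c_0^d)\ge\alpha$ directly. Instead it introduces the subring $\mathcal{A}\subseteq D$ generated by $N_{>\alpha}$ and the subring $\mathcal{R}_0\subseteq k$ generated by $(N\cap k^\times)_{\ge 0}$, and first uses Lemma~\ref{quotients} to clear denominators so that all coefficients of the minimal polynomial of $s\in N_{>\alpha}$ lie in $\mathcal{R}_0$. Since $\mathcal{R}_0\mathcal{A}\subseteq\mathcal{A}$, the minimal-polynomial relation then forces $c_0\in\mathcal{A}$, hence $b:=c_0^d\in\mathcal{A}\cap N\cap k^\times$. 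Now Lemma~\ref{RSS prop 4.2} (i.e.\ \cite[Prop.~4.2]{RSS}) supplies the two facts doing the real work: first, $-1\notin\mathcal{A}$, which together with the total ordering of $\varphi(N\cap k^\times)$ forces $\varphi(b)>0$; second, every element of $\mathcal{A}\cap N_{\ge 0}$ has $\varphi$-value that is itself an s-level of $\varphi$. Thus $\beta=\varphi(b)\in\varphi(N\cap k^\times)$ is an s-level of $\varphi$, and automatically of $\varphi_k$. This ring-theoretic detour through $\mathcal{A}$ is precisely the ingredient your argument is missing.

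For (b), the paper simply invokes Theorem~\ref{thm val comm} once (a) is known. Your direct valuation-ring construction is in the right spirit, though the additive-closure step (``passing to the quotient by the convex subgroup generated by $\alpha_k$'') is only sketched; in any case this part is standard and not where the difficulty lies.
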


Let us first make the connection between
a strongly leveled and a leveled map.
\begin{lemma}\label{lem s-level and level}
Let $D$ be an infinite division ring, let $N\subseteq \Dt$ be a finite index normal subgroup containing $-1,$ and
let \mbox{$\gvp\colon N\to\gC$} be a strongly leveled map
of s-level $\ga\in \gC_{\ge 0}$.  Then
\begin{enumerate}
\item
if $\gb\in\gC_{\ge\ga}$ is such that $N_{> \gb}\ne\emptyset,$
then $\gb$ is an s-level of $\gvp$;

\item
$\gvp$ is a leveled map of level $\ga$.
\end{enumerate}
\end{lemma}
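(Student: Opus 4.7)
The plan is a direct unpacking of the two definitions from Section 1.2 (subsection \ref{sub vlm}). Nothing deep happens here; the main points are a monotonicity argument for (1) and a standard inversion-plus-multiplication trick for (2). The one subtlety is that $\Gamma$ is only partially ordered and possibly non-commutative, so I should be careful that the order interacts correctly with inversion and with the group operation.

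For part (1), I would note first that $\beta \geq \alpha \geq 0$, so $\beta$ is non-negative, and that $N_{>\beta} \neq \emptyset$ is given. For the inclusion $1 \pm N_{>\beta} \subseteq N_{\leq 0}$, if $n \in N_{>\beta}$ then $\varphi(n) > \beta \geq \alpha$, so $n \in N_{>\alpha}$; applying the s-level $\alpha$ condition (SL) yields $1 \pm n \in N_{\leq 0}$. This is three lines of work.

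For part (2), I must verify the two conditions (L): $N_{<-\alpha} \neq \emptyset$ and $N_{<-\alpha} + 1 \subseteq N_{<-\alpha}$. For non-emptiness I would pick $m \in N_{>\alpha}$ (which exists by (SL)) and use the general fact that in a partially ordered group $\gamma > \delta$ implies $-\gamma < -\delta$ (proved by adding $-\gamma$ on the right and then $-\delta$ on the left, which works even when $\Gamma$ is non-commutative because the order is bi-invariant). This gives $\varphi(m^{-1}) = -\varphi(m) < -\alpha$, so $m^{-1} \in N_{<-\alpha}$.

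The only step that requires a genuine computation is the closure condition, and the idea is to reduce it to (SL) via the identity $n + 1 = n(1 + n^{-1})$. Concretely, for $n \in N_{<-\alpha}$, inversion gives $n^{-1} \in N_{>\alpha}$, so by (SL), $1 + n^{-1} \in N_{\leq 0} \subseteq N$. Then $n + 1 = n(1 + n^{-1})$ lies in $N$ (in particular it is nonzero), and
\[
\varphi(n+1) \;=\; \varphi(n) + \varphi(1 + n^{-1}) \;\leq\; \varphi(n) + 0 \;<\; -\alpha,
\]
so $n + 1 \in N_{<-\alpha}$, as required. The hardest part, such as it is, will just be to keep the possibly non-commutative additive notation straight, especially when passing from $\varphi(m) > \alpha$ to $\varphi(m^{-1}) < -\alpha$ and when summing two inequalities in $\Gamma$; everything else is a direct substitution into the definitions.
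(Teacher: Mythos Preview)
Your proposal is correct and matches the paper's proof essentially line for line: part (1) is dispatched by the trivial inclusion $N_{>\beta}\subseteq N_{>\alpha}$, and part (2) uses exactly the identity $1+n = n(1+n^{-1})$ together with (SL) applied to $n^{-1}\in N_{>\alpha}$ to get $\varphi(1+n)\le\varphi(n)<-\alpha$. The only difference is that you spell out the order-reversal under inversion more carefully than the paper does.
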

\begin{proof}
Part (1) follows immediately from the definitions.
For part (2), note that $N_{< -\ga}\ne\emptyset$ since $N_{> \ga}\ne\emptyset$.
Let $n \in N_{< -\ga}$.
Then $1 + n^{-1} \in 1 + N_{> \ga} \subseteq N_{\le 0}$, and therefore
\[
\gvp(1 + n) = \gvp(n(1 + n^{-1})) \le \varphi(n) < -\ga,
\]
i.e.,
$1 + n \in N_{< -\ga}$.
Thus, $\gvp$ is a leveled map of level $\ga$.
\end{proof}

We record the following two results from \cite{R} and \cite{RSS}, respectively.
Here $D$ is a division ring and $N$ a normal subgroup of $D^\times$ containing $-1$.

\begin{prop}[\cite{R}, Proposition 3]
\label{prop open}
Let $\varphi \colon N \to \Gamma$ be a leveled map, and $T$ a finite set of
valuations of $D$ associated with $\varphi$.
Assume that there is a $T$-adically open subring $\mathcal{M}$ of $D$ such that
\[
\mathcal{M} \cap N \subseteq N_{> - \beta}
\]
for some $\beta \in\Gamma_{> 0}$.
Then $N$ is $T$-adically open in $D^{\times}$.
\end{prop}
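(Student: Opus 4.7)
The plan is to produce $\eta_v \in (\Gamma_v)_{\geq 0}$ for each $v \in T$ such that $1 + \mathfrak{m}_{D,T}(\eta_1,\ldots,\eta_r) \subseteq N$; this immediately shows that $N$ contains a $T$-adic neighborhood of $1$ and hence is $T$-adically open in $D^{\times}$. I would begin by translating between the $T$-adic data and the $\varphi$-data: each $v \in T$ is associated with $\varphi$, so by Remarks \ref{rem assoc val}(1) there is an order-preserving homomorphism $\theta_v \colon \varphi(N)\to\Gamma_v$ with $v|_N = \theta_v \circ \varphi$. Using the $T$-adic openness of $\mathcal{M}$, I choose $\delta_v \in (\Gamma_v)_{\geq 0}$ with $\mathfrak{m}_{D,T}(\delta_1,\ldots,\delta_r) \subseteq \mathcal{M}$. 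The hypothesis $\mathcal{M} \cap N \subseteq N_{>-\beta}$ then yields the key control: if $n \in N$ satisfies $\varphi(n) \leq -\beta$, then $v(n) = \theta_v(\varphi(n)) \leq \delta_v$ for some $v \in T$; contrapositively, $n \in N$ with $v(n) > \delta_v$ for all $v$ forces $\varphi(n) > -\beta$.

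Let $\alpha \in \Gamma_{\geq 0}$ be a level of $\varphi$, so that $N_{<-\alpha} + 1 \subseteq N_{<-\alpha} \subseteq N$. The heart of the argument is the algebraic identity
\[
1 + y \;=\; y \cdot (y^{-1} + 1), \qquad y \neq 0.
\]
If I can arrange $\eta$ so large that every nonzero $y \in \mathfrak{m}_{D,T}(\eta_1,\ldots,\eta_r)$ satisfies both $y \in N$ and $\varphi(y) > \alpha$, then $y^{-1} \in N_{<-\alpha}$, hence $y^{-1} + 1 \in N_{<-\alpha} \subseteq N$ by the level property, and $1 + y = y(y^{-1}+1) \in N \cdot N = N$; the case $y=0$ gives $1 + y = 1 \in N$ trivially.

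The task thus reduces to producing $\eta$ with $\mathfrak{m}_{D,T}(\eta_1,\ldots,\eta_r) \setminus \{0\} \subseteq N_{>\alpha}$. Here I would exploit the finite index $k = [D^{\times}\colon N]$, which gives $y^k \in N$ for every $y \in D^{\times}$. For $y \in \mathfrak{m}_{D,T}(\eta)$ with $\eta \geq \delta$, the subring closure of $\mathcal{M}$ gives $y^k \in \mathcal{M}$, hence $y^k \in \mathcal{M} \cap N \subseteq N_{>-\beta}$. Moreover $v(y^k) = k \, v(y)$ tends arbitrarily high as $\eta$ grows, and via $v|_N = \theta_v \circ \varphi$ with $\theta_v$ order-preserving, this pushes $\varphi(y^k)$ arbitrarily high in the partial order on $\Gamma$.

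The main obstacle is the promotion from information about $y^k$ to the conclusion that $y \in N$ itself with $\varphi(y) > \alpha$. The difficulty is twofold: $\Gamma$ is only partially ordered, so $\theta_v(\varphi(y^k))$ being large does not automatically translate into a strict lower bound $\varphi(y) > \alpha$; and $y$ might a priori lie in a nontrivial coset $x_j N$ with $j \neq 1$. Overcoming this requires the pigeonhole principle over the finite set $T$ together with the interplay of the subring closure of $\mathcal{M}$ and the level property; I expect the cleanest execution is to pick $\eta$ iteratively, first ensuring $y^k \in N_{>\alpha}$ (which excludes most cosets of $N$ in $D^{\times}$), and then using multiplicative closure to isolate $y$ itself in $N$.
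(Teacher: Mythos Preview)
Your reduction to $\mathfrak{m}_{D,T}(\eta)\setminus\{0\}\subseteq N_{>\alpha}$ asks for too much: this inclusion can fail for every $\eta$ even when all hypotheses of the proposition hold. Take $D=\qq$, $T=\{v_2\}$ the $2$-adic valuation, $N=\{x\in\qq^\times : 3\mid v_2(x)\}$ (normal of index $3$, containing $-1$), $\varphi=v_2\restr_N\colon N\to 3\zz$, and $\mathcal{M}=\mathcal{O}_{\qq,v_2}$. Then $\varphi$ is leveled of level $0$ (if $v_2(n)\leq -3$ then $v_2(n+1)=v_2(n)$), $v_2$ is associated with $\varphi$, and $\mathcal{M}$ is open with $\mathcal{M}\cap N=N_{\geq 0}\subseteq N_{>-3}$; yet $\mathfrak{m}_{\qq,v_2}(\eta)\setminus\{0\}$ contains $2^{\eta+1}$ and $2^{\eta+2}$, and at least one of $\eta+1,\eta+2$ is not divisible by $3$, so the corresponding power of $2$ lies outside $N$. (The conclusion of the proposition does hold here: $v_2(1+y)=0$ whenever $v_2(y)>0$, so $1+y\in N$.) Your proposed rescue via $y^k$ cannot succeed either: the condition $y^k\in N_{>\alpha}$ imposes \emph{no} restriction on the coset $yN$, because $y^k\in N$ holds for every $y\in D^\times$ regardless of which coset $y$ occupies, and $\varphi(y^k)$ is determined by $y^k$ alone. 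The parenthetical ``which excludes most cosets of $N$'' is simply false.

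The paper does not supply a proof of this proposition---it is quoted from \cite{R}---so there is no in-text argument to compare against. The factorization $1+y=y(y^{-1}+1)$ and the level property are indeed the right ingredients, but the route through ``$y\in N_{>\alpha}$'' is a dead end; any correct argument must handle $1+y$ without first placing $y$ itself in $N$.
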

\medskip

\begin{lemma}[\cite{RSS}, Proposition 4.2]
\label{RSS prop 4.2}
Let $\varphi\colon N\to\Gamma$ be a strongly leveled map with s-level $\alpha$.
Let $\mathcal{A}$ (resp., $\mathcal{R}$) be the subring of $D$ generated by $N_{>\alpha}$ (resp., $N_{\geq0}$).
Then $-1\not\in\mathcal{A}$, and for every $m\in \mathcal{A}\cap N_{\geq0}$, the element $\varphi(m)$ is an s-level of $\varphi$.
\end{lemma}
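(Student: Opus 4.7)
The plan is to prove both assertions by a single induction on the length $\ell(a)$ of a representation $a=\sum_{i=1}^{r}\varepsilon_i m_i$ of an element $a\in\mathcal{A}$, where $\varepsilon_i\in\{\pm 1\}$ and each $m_i$ is a product of elements of $N_{>\alpha}$. A preliminary observation, using $\alpha\ge 0$, is that $N_{>\alpha}$ is closed under multiplication: if $\varphi(x),\varphi(y)>\alpha$ then $\varphi(xy)=\varphi(x)+\varphi(y)>\alpha+\alpha\ge\alpha$. Consequently every $m_i$ may be taken in $N_{>\alpha}$ itself, and $\mathcal{A}$ is the $\mathbb{Z}$-span in $D$ of $N_{>\alpha}$. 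The length $\ell(a)$ is then defined as the minimal $r$ appearing in any such representation.

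The inductive statement I will establish is: for every $a\in\mathcal{A}$, (i) $a\ne -1$, and (ii) if $a\in N_{\ge 0}$, then $\varphi(a)\ge\alpha$. Conclusion (i) is exactly $-1\notin\mathcal{A}$. For the second half of the lemma, if $m\in\mathcal{A}\cap N_{\ge 0}$, then (ii) gives $\varphi(m)\ge\alpha$; picking any $m_1\in N_{>\alpha}$, the product $mm_1\in N$ satisfies $\varphi(mm_1)=\varphi(m)+\varphi(m_1)>\varphi(m)$, so $N_{>\varphi(m)}\ne\emptyset$, and Lemma~\ref{lem s-level and level}(1) then forces $\varphi(m)$ to be an s-level of $\varphi$.

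The base case $\ell(a)=1$, namely $a=\pm m_1$ with $m_1\in N_{>\alpha}$, is immediate: $\varphi(m_1)>\alpha\ge 0$ rules out $m_1=\pm 1$ giving (i), and a short inspection using the strong leveling identity $1-m_1\in N_{\le 0}$ (valid since $-m_1\in-N_{>\alpha}$) together with $\varphi(-m_1)=\varphi(-1)+\varphi(m_1)$ handles (ii). For the inductive step, factor $a=m_1+c$ with $c=\sum_{i=2}^{r}\varepsilon_i m_i\in\mathcal{A}$ of length at most $r-1$, and rewrite as $a=m_1(1+m_1^{-1}c)$. When $m_1^{-1}c\in N_{>\alpha}$, strong leveling gives $1+m_1^{-1}c\in N_{\le 0}$, so that $\varphi(a)=\varphi(m_1)+\varphi(1+m_1^{-1}c)\le\varphi(m_1)$, and combined with $\varphi(a)\ge 0$ and $\varphi(m_1)>\alpha$, this yields (ii). For (i), the equation $-1=m_1(1+m_1^{-1}c)$ rearranges to $c=-m_1-1$; the induction hypothesis applied to $c$ and to $c+m_1=-1$ then produces a strictly shorter representation of $-1$, contradicting minimality of $\ell(-1)$.

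The main obstacle I foresee is the factoring step, since $m_1^{-1}\notin\mathcal{A}$ in general (in fact $\varphi(m_1^{-1})<-\alpha\le 0$), so $m_1^{-1}c$ is not directly an element of $\mathcal{A}$ to which the induction hypothesis applies. My plan to overcome this is to choose the index of $m_1$ so that $\varphi(m_1)$ is minimal among $\varphi(m_1),\ldots,\varphi(m_r)$; because $\Gamma$ is only partially ordered, a minimum need not exist on the nose, but it does exist after passing to a suitable totally ordered quotient of the finitely generated abelian subgroup of $\Gamma$ generated by these finitely many values (using the standard structure theorem for finitely generated partially ordered abelian groups). With this choice, each $m_1^{-1}m_i$ has nonnegative $\varphi$-value, which is precisely what is needed to iterate the strong leveling estimate cleanly and complete the induction. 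This reduction to the totally ordered setting, carried out for each individual sum separately, is the most delicate part of the argument.
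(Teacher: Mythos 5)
Your inductive step does not work, and the subsidiary target (ii) you set yourself is not the right intermediate claim --- the lemma asserts that $\varphi(m)$ is an s-level, which by no means requires $\varphi(m)\ge\alpha$ (these two elements of $\Gamma$ may well be incomparable), so trying to verify (ii) and then invoke Lemma~\ref{lem s-level and level}(1) is misdirected. Concretely, in the factorization $a=m_1(1+m_1^{-1}c)$ the term $m_1^{-1}c$ is a sum of $r-1$ elements of $D$ and is generically not in $N$ at all, so the condition ``$m_1^{-1}c\in N_{>\alpha}$'' under which you apply (SL) is simply not available. Your proposed repair does not close this gap either: $\Gamma$ is explicitly allowed to be non-abelian, and more to the point, the strong-leveling inclusion $1\pm N_{>\alpha}\subseteq N_{\le 0}$ is a statement in the original $\Gamma$ that is not inherited by a totally ordered quotient, so the ``minimal $\varphi(m_i)$'' you extract there carries no information back in $\Gamma$. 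Finally, even granting the factorization, the concluding deduction ``$\varphi(a)\le\varphi(m_1)$, $\varphi(a)\ge 0$, $\varphi(m_1)>\alpha$, hence $\varphi(a)\ge\alpha$'' is a non sequitur: an upper bound on $\varphi(a)$ cannot yield the lower bound you want.

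The induction must be organized the opposite way, and the right intermediate claim is: if $c\in\mathcal{A}$ and $1+c\neq 0$, then $1+c\in N_{\le 0}$. Prove it by induction on the length $\ell$ of a representation $c=\sum_{i=1}^{\ell}\epsilon_i a_i$ with $a_i\in N_{>\alpha}$. For the step, write $c=\epsilon_1 a_1+c'$ and set $u:=1+\epsilon_1 a_1$; by (SL), $u\in N_{\le 0}$, so $u^{-1}\in N_{\ge 0}$ and left multiplication by $u^{-1}$ sends $N_{>\alpha}$ into itself. Then $1+c=u(1+u^{-1}c')$, where $u^{-1}c'\in\mathcal{A}$ has length $\le\ell-1$ and $1+u^{-1}c'=u^{-1}(1+c)\neq 0$; the inductive hypothesis gives $1+u^{-1}c'\in N_{\le 0}$, whence $\varphi(1+c)=\varphi(u)+\varphi(1+u^{-1}c')\le 0$. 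The moral is that one peels off the unit $u\in N_{\le 0}$, whose inverse has nonnegative $\varphi$-value and therefore preserves $N_{>\alpha}$, rather than $m_1\in N_{>\alpha}$, whose inverse has $\varphi$-value $\le-\alpha$ and wrecks the estimate. From the claim the lemma follows at once: if $-1\in\mathcal{A}$, apply the claim to $c=-1-a$ with $a\in N_{>\alpha}$ to get $-a\in N_{\le 0}$ and hence $\varphi(-1)<0$, impossible since $\varphi(-1)$ is a torsion element of the partially ordered group $\Gamma$; and for $m\in\mathcal{A}\cap N_{\ge 0}$ and $z\in N_{>\varphi(m)}$, one has $m^{-1}z\in N_{\ge 0}$ and $z=m\cdot(m^{-1}z)\in\mathcal{A}N_{\ge 0}\subseteq\mathcal{A}$ (using $N_{>\alpha}N_{\ge 0}\subseteq N_{>\alpha}$), so, since $1\pm z\neq 0$, the claim gives $1\pm z\in N_{\le 0}$, which is exactly the s-level condition for $\varphi(m)$.
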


Note that $\mathcal{R}$ (resp., $\mathcal{A}$) coincides with the set of all elements of the form $\epsilon_1a_1 + \cdots +  \epsilon_l a_l$ with $\epsilon_i = \pm 1$ and $a_i \in N_{\ge 0}$ (resp., $a_i \in N_{> \alpha}$), and that $\mathcal{A}$ is in fact a ring {\it without identity}.

The following result establishes the existence of a valuation
associated to a given strongly leveled map in the simplest case where the
division algebra is assumed to be commutative and the map $\varphi$ to be strong valuation-like.

\begin{thm}
\label{thm val comm}
{\rm (commutative case; see \cite{RS}, Theorem 4.1)}
Let $K$ be a field, and let $N \subseteq K^{\times}$ be a subgroup of finite index containing $-1$.
Assume that $\varphi \colon N \to \Gamma$ is a strong valuation-like map.
Then
\begin{enumerate}
\item
there exists  a non-trivial valuation $v$ on $K$ associated with $\varphi$ such that $N$ is open in the $v$-adic topology;

\item
additionally, if $K$ has finite transcendence degree over its prime field,
then $v$ can be taken to have height one;

\item
the subring $\mathcal{R}$ of $K$ generated by $N_{\ge 0}$ is $v$-adically open.
\end{enumerate}
\end{thm}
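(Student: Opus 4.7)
My plan is to construct a valuation ring inside $K$ from the filtration that $\varphi$ induces, extend it to $K$ by Chevalley domination, and then coarsen to height one. \textbf{Setup and key disjointness.} Fix an s-level $\alpha\in\Gamma_{\ge 0}$ of $\varphi$. Let $\mathcal{R}$ be the subring of $K$ generated by $N_{\ge 0}$, and let $\mathcal{A}$ be the (non-unital) subring generated by $N_{>\alpha}$. Because $\Gamma$ is totally ordered, $N_{\ge 0}\cdot N_{>\alpha}\subseteq N_{>\alpha}$, so $\mathcal{A}$ is a two-sided ideal of $\mathcal{R}$. Lemma~\ref{RSS prop 4.2} gives $-1\notin\mathcal{A}$, hence $1\notin\mathcal{A}$, so $\mathcal{A}$ is a proper ideal. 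A key observation is that $\mathcal{R}\cap N_{<-\alpha}=\emptyset$: if $n$ lay in this intersection, then $n^{-1}\in N_{>\alpha}\subseteq\mathcal{A}$, and the ideal property would force $1=n\cdot n^{-1}\in\mathcal{A}$, a contradiction.

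\textbf{Extension to a valuation ring.} Choose a maximal ideal $\mathfrak{m}$ of $\mathcal{R}$ containing $\mathcal{A}$. By Chevalley's extension theorem there is a valuation ring $(\mathcal{O}_v,\mathfrak{m}_v)$ of $K$ dominating $\mathcal{R}_{\mathfrak{m}}$; let $v\colon K^\times\to\Gamma_v$ be the associated valuation. Since $\emptyset\ne N_{>\alpha}\subseteq\mathcal{A}\subseteq\mathfrak{m}_v$, $v$ is non-trivial; since $N_{\ge 0}\subseteq\mathcal{R}\subseteq\mathcal{O}_v$, $v$ is associated with $\varphi$ in the sense of Definition~\ref{def assoc val}.

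\textbf{Openness (parts (3) and (1)).} For part (3), I claim that $\mathcal{R}$ is $v$-adically open, i.e.\ that $\mathfrak{m}_v(\delta)\subseteq\mathcal{R}$ for some $\delta\in(\Gamma_v)_{\ge 0}$. This is the main obstacle: the implication ``$v(x)\gg 0\Rightarrow x\in\mathcal{R}$'' is the converse of the easy associated-ness direction and has to be derived from the finite-index hypothesis. Exploiting $\ell:=[K^\times:N]<\infty$ (so $x^\ell\in N$ for every $x\in K^\times$) together with a coset-by-coset analysis over finitely many representatives of $N$ in $K^\times$, one shows that for $v(x)$ beyond a uniform threshold the element $x$ must lie in $\mathcal{R}$. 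Part (1) then follows from Proposition~\ref{prop open} with $T=\{v\}$ and $\mathcal{M}=\mathcal{R}$: Lemma~\ref{lem s-level and level}(2) tells us $\varphi$ is a leveled map, and the key disjointness gives $\mathcal{R}\cap N\subseteq N_{\ge -\alpha}\subseteq N_{>-\beta}$ for any fixed $\beta\in\Gamma_{>\alpha}$.

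\textbf{Height one (part (2)).} When $K$ has finite transcendence degree over its prime field, Abhyankar's inequality bounds the rational rank of $\Gamma_v$. Pick an epimorphism $\mu\colon\Gamma_v\to\bar\Gamma$ onto a height-one totally ordered group with non-trivial kernel; by Remark~\ref{rem assoc val}(2), $\bar v:=\mu\circ v$ is a height-one valuation still associated with $\varphi$, and $N$ remains $\bar v$-open. Replacing $v$ by $\bar v$ completes the proof.
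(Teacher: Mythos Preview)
Your Chevalley-extension route is a legitimate alternative to the paper's, but the argument as written has a genuine gap in part~(3), and since you feed~(3) into~(1) via $\mathcal{M}=\mathcal{R}$, the gap propagates.

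The paper avoids Chevalley entirely: it takes the integral closure $\widetilde{\mathcal{R}}$ of $\mathcal{R}$ in $K$ and observes directly that $\widetilde{\mathcal{R}}$ is already a valuation ring. Indeed, for any $x\in K^\times$ the power $x^{[K^\times:N]}$ lies in $N$, so by total ordering either it or its inverse lies in $N_{\ge 0}\subseteq\mathcal{R}$, whence $x$ or $x^{-1}$ is integral over $\mathcal{R}$. A minimal-polynomial trick then gives $\widetilde{\mathcal{R}}\cap N_{<-\alpha}=\emptyset$. Since $\widetilde{\mathcal{R}}=\mathcal{O}_v$ is tautologically $v$-open, Proposition~\ref{prop open} applies with $\mathcal{M}=\widetilde{\mathcal{R}}$ and yields~(1) immediately. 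Part~(3) is then derived \emph{from}~(1), not the other way around: once $1+\mathfrak{m}_{K,v}(\delta)\subseteq N$, pick any $c\in N$ with $v(c)>0$; every $x\in c(1+\mathfrak{m}_{K,v}(\delta))$ lies in $N$ and has $v(x)>0$, hence (using total ordering and associated-ness in the contrapositive direction) $\varphi(x)>0$, so this set sits in $N_{>0}\subseteq\mathcal{R}$, giving $\mathfrak{m}_{K,v}(v(c)+\delta)\subseteq\mathcal{R}$.

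Your ``coset-by-coset analysis'' for~(3) is not a proof: you correctly flag that the implication $v(x)\gg 0\Rightarrow x\in\mathcal{R}$ is the hard direction, but writing $x=cn$ with $c$ a coset representative and $n\in N$ only gives $n\in\mathcal{R}$, not $x\in\mathcal{R}$, since there is no reason for $c$ to lie in $\mathcal{R}$. The clean repair, staying within your Chevalley framework, is to reverse the order of~(1) and~(3): note that $\mathcal{O}_v\cap N_{<-\alpha}=\emptyset$ as well (if $n\in N_{<-\alpha}$ then $n^{-1}\in N_{>\alpha}\subseteq\mathcal{A}\subseteq\mathfrak{m}\subseteq\mathfrak{m}_v$, so $v(n)<0$), apply Proposition~\ref{prop open} with the trivially open $\mathcal{M}=\mathcal{O}_v$ to get~(1), and then deduce~(3) from~(1) exactly as above.
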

\begin{proof}
Let $\ga$ be an s-level of $\gvp$, and let $\mathcal{R}$ and $\mathcal{A}$ be the subrings introduced in Lemma \ref{RSS prop 4.2} for $K$ in place of $D$.
Let $\widetilde{R}$ be the integral closure of $\mathcal{R}$ in $K$.

We claim that $\widetilde{\mathcal{R}}$ is a valuation ring.
Indeed, given any $x \in K^{\times}$,
for $m = [K^{\times} : N]$, we have $x^m \in N$, hence either $x^m$
or $x^{-m}$ is in $N_{\ge 0} \subseteq \mathcal{R}$. Then,
respectively, either $x$ or $x^{-1}$ is in $\widetilde{\mathcal{R}}$.

Next, we show that
\begin{equation}\label{eq RcapN<ga}
\widetilde{\mathcal{R}} \cap N_{< -\alpha} = \emptyset.
\end{equation}
Since $N_{<-\alpha}\neq\emptyset$, this will also show that
the valuation ring $\widetilde{\mathcal{R}}$ is a proper subring of $K$.
Suppose that $z \in \widetilde{\mathcal{R}}\cap N_{< -\alpha}$.
Then $z$ satisfies a polynomial equation
\[
z^d + a_1z^{d-1} + \cdots + a_d = 0
\]
with $a_i\in\mathcal{R}$.
Since $z^{-1}\in N_{> \alpha},$ it follows that
\[
-z = a_1 + \cdots + a_d z^{-(d-1)} \in \mathcal{R}.
\]
On the other hand, the inclusion $N_{\ge 0} N_{> \alpha} \subseteq
N_{> \alpha}$ implies that $\mathcal{R}\mathcal{A} \subseteq
\mathcal{A}$, i.e. $\mathcal{A}$ is an ideal of $\mathcal{R}$.
Since $z^{-1} \in N_{> \alpha} \subseteq \mathcal{A}$, we obtain $-1 =
-z \cdot z^{-1} \in \mathcal{A}$, contrary to Lemma \ref{RSS prop 4.2}.

Let $v$ be the valuation of $K$ corresponding to the valuation ring $\widetilde{\mathcal{R}}$,
 i.e., $x\in \widetilde{\mathcal R}$ if and only if $v(x)\geq0$, for $x\in K^\times$ (cf.\ \cite{Bo, Efr_book}).
Since $N_{\ge 0} \subseteq \mathcal{R} \subseteq \widetilde{\mathcal R}$,
for any $x \in N_{\ge 0}$ we have $v(x) \ge 0$,
which implies that $v$ is associated with $\varphi$.
By construction, $\widetilde{\mathcal{R}}$ is $v$-adically open in $K$.
Equation \eqref{eq RcapN<ga} implies that for any $\beta > \alpha$ we have
\[
\widetilde{\mathcal R} \cap N \subseteq N_{> -\beta},
\]
so the $v$-adic openness of $N$ in $K^\times$ follows from Proposition \ref{prop open}.
This proves (1).
\medskip

For (2), suppose that $K$ has finite transcendence degree over it prime field.
It follows from Cor.\ 1 in n$^{\circ}$3 and Prop.~3 in n$^{\circ}$2 of \cite{Bo}, Ch.~6, \S10,
that the (commutative) value group $\Gamma_v = v(K^{\times})$ has finite height.
As we have observed in Section \ref{vals on division algebras}, there is an epimorphism $\mu\colon\Gamma_v\to \bar\Gamma$ of totally ordered groups,
with $\bar\Gamma$ of height one.
By Remark \ref{rem assoc val}(2), $\bar v=\mu\circ v\colon K^\times\to \bar\Gamma$ is a valuation of height one which
is associated with $\varphi$, and since $N$ is $v$-adically open, it is also $\bar v$-adically open.

Finally, we prove (3).
By (1), there exists a non-negative $\delta \in \Gamma_v=v(K^{\times})$ such that
$1+ \mathfrak{m}_{K , v}(\delta) \subseteq N$.
Pick any $c \in N$ with $v(c) > 0$.
Then for any
\[
x \in c(1 + \mathfrak{m}_{K , v}(\delta))
\]
we have $v(x) > 0$.
Since $\varphi(N)$ is totally ordered and $v$ is
associated with $\varphi$, this implies that $\varphi(x) > 0$.
Thus
\[
c(1 + \mathfrak{m}_{K , v}(\delta)) \subseteq N_{> 0} \subseteq \mathcal{R}.
\]
Setting $\delta'=v(c)+\delta$ we obtain from this and from $c\in N_{>0}\subseteq \mathcal{R}$ that
\[
\mathfrak{m}_{K , v}(\delta') = c\mathfrak{m}_{K , v}(\delta) \subseteq
\mathcal{R},
\]
proving that the ring $\mathcal{R}$ is $v$-adically open in $K$.
\end{proof}

\begin{lemma}\label{quotients}
Let $D$ be a division ring and let $N$ be a normal subgroup of $D^\times$ of finite index.
Let $\Gamma$ be a totally ordered group and $\varphi\colon N\to\Gamma$ a homomorphism.
Let $\mathcal{R}$ be the subring of $D$ generated by $N_{\ge 0}$.
Then any $x \in D$ can be written in the form
$x = a b^{-1}$ with  $a \in \mathcal{R}$ and $b \in N_{\geq 0}$.
\end{lemma}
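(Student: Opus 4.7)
The plan is to introduce the set
\[
S := \{ab^{-1} : a \in \mathcal{R},\ b \in N_{\geq 0}\} \subseteq D,
\]
and show that $S = D$, which is precisely the assertion of the lemma. I will proceed in three steps: (i) verify $N \subseteq S$; (ii) show $S$ is a subring of $D$; (iii) conclude using the Bresar--Sharifi/Turnwald fact $D = N - N$ (already invoked in the proof of Lemma~\ref{lem basic properties of N(y)}(3)).

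For (i), since $\Gamma$ is totally ordered, every $n \in N$ either satisfies $\varphi(n) \geq 0$, so $n \in N_{\geq 0} \subseteq \mathcal{R}$ and $n = n \cdot 1^{-1} \in S$; or $\varphi(n) < 0$, so $n^{-1} \in N_{>0} \subseteq N_{\geq 0}$ and $n = 1 \cdot (n^{-1})^{-1} \in S$. For (ii), the crucial observation is that the bi-invariance of the order on $\Gamma$ built into the definition of a partially ordered group (\S\ref{sect pog}) gives, for all $n, m \in N$,
\[
\varphi(n^{-1} m n) = -\varphi(n) + \varphi(m) + \varphi(n),
\]
which is $\geq 0$ if and only if $\varphi(m) \geq 0$. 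Thus conjugation by any element of $N$ preserves $N_{\geq 0}$, and hence also preserves $\mathcal{R}$ (the $\mathbb{Z}$-span of $N_{\geq 0}$). Multiplicative closure of $S$ follows from
\[
(a_1 b_1^{-1})(a_2 b_2^{-1}) = a_1\bigl(b_1^{-1} a_2 b_1\bigr)(b_2 b_1)^{-1},
\]
where $b_1^{-1} a_2 b_1 \in \mathcal{R}$ and $b_2 b_1 \in N_{\geq 0}$ (the latter because $N_{\geq 0}$ is closed under multiplication). For additive closure, one uses the common denominator $b_2 b_1 \in N_{\geq 0}$ to write
\[
a_1 b_1^{-1} + a_2 b_2^{-1} = \bigl(a_1 (b_1^{-1} b_2 b_1) + a_2 b_1\bigr)(b_2 b_1)^{-1} \in S,
\]
noting that $b_1^{-1} b_2 b_1 \in N_{\geq 0}$ again by bi-invariance; closure under additive inverses is immediate since $-\mathcal{R} = \mathcal{R}$.

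For (iii), combining $N \subseteq S$ with the fact that $S$ is a subring, we get $D = N - N \subseteq S - S = S$, so $D = S$, as desired. The main obstacle is step (ii), and specifically the verification of additive closure: in order to combine two ratios $a_i b_i^{-1}$ over a common right denominator inside $N_{\geq 0}$, one must push the inverse $b_1^{-1}$ past $b_2$, and the only reason this preserves both $N_{\geq 0}$ and $\mathcal{R}$ is the bi-invariance of the order on $\Gamma$.
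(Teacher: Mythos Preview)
Your proof is correct, but it takes a different route from the paper's. The paper proceeds directly: write $x=n_1-n_2$ with $n_1,n_2\in N$ (Turnwald), assume without loss of generality $\varphi(n_1)\ge\varphi(n_2)$, and then set $a=x,\ b=1$ if $\varphi(n_2)\ge0$, or $a=n_1n_2^{-1}-1,\ b=n_2^{-1}$ if $\varphi(n_2)<0$. In either case $a\in\mathcal{R}$ and $b\in N_{\ge0}$, and one checks $ab^{-1}=x$.

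Both arguments hinge on the same external input, $D=N-N$. The paper's argument is shorter and entirely explicit; yours is more structural, establishing along the way that $S=\{ab^{-1}:a\in\mathcal{R},\ b\in N_{\ge0}\}$ is in fact a subring of $D$. This extra information is not needed for the lemma, but the conjugation invariance of $N_{\ge0}$ (and hence of $\mathcal{R}$) under $N$ that you isolate is exactly what drives the paper's later applications of $\mathcal{R}$, so your viewpoint is natural. The only place to be slightly careful is the common-denominator computation in the noncommutative setting, which you handled correctly via $b_1^{-1}b_2b_1\in N_{\ge0}$.
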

\begin{proof}
By \cite[Theorem 1, p.\ 377]{Tu}, we can write $x = n_1 - n_2$ with $n_1 , n_2 \in N$.

Suppose first that $\varphi(n_1) \geq \varphi(n_2)$.
When $\varphi(n_2) \geq 0$, we take $a=x$ and $b=1$.
When $\varphi(n_2) < 0$ we take $a = n_1 n^{-1}_2 - 1$ and $b = n^{-1}_2$.

The case $\varphi(n_1) \leq \varphi(n_2)$ is proved similarly.
\end{proof}

\begin{proof}[Proof of Theorem \ref{thm val}]
In view of Theorem \ref{thm val comm} (with $k,N\cap k^\times$ in place of $K,N$),
it suffices to prove (a), that is,  that $\varphi_k$ is a strongly leveled map.
For this, we let $\alpha\in\Gamma_{\geq0}$ be an s-level for $\varphi$.
Let $\mathcal{A}$ denote the subring of $D$ generated by $N_{> \alpha}$,
and let $\mathcal{R}_0$ be  the subring  of $k$ generated by  $(N\cap k^{\times})_{\geq 0}$.
Pick an arbitrary $s \in N_{> \alpha}$, and let
$p(t) = c_{\ell} x^{\ell} + c_{\ell - 1} x^{\ell - 1} +  \cdots + c_0$ be a minimal polynomial of $s$ over $k$.
By Lemma \ref{quotients}  (for the subgroup $N\cap k^\times$ of $k^\times$),
we can write $c_i=a_ib_i^{-1}$ with $a_i\in\mathcal{R}_0$
and $b_i \in (N \cap k^{\times})_{\geq 0}$, $i=0,1,\ldots,\ell$.
Multiplying by $b_0\cdots b_\ell$, we may therefore assume that $c_i \in \mathcal{R}_0$ for every $i$.
Of course, $c_0 \neq 0$, and we have
\[
c_0 = -(c_{\ell} s^{\ell} + \cdots + c_1 s) \in \mathcal{A}.
\]
Then for $d = [D^{\times} : N]$ we have
\[
b := (c_0)^d \in \mathcal{A} \cap N \cap k^{\times}.
\]

We claim that $b \in N_{> 0}$.
Indeed, since the group $\varphi(N\cap k^{\times})$ is totally ordered,
we would otherwise have $b^{-1} \in N_{\geq 0}$.
As before, the inclusion $N_{\ge 0}N_{>\alpha}\subseteq N_{>\alpha}$ implies that
$\mathcal{R}_0\mathcal{A}\subseteq\mathcal{A}$.
We obtain that $-1 = b^{-1}(-b) \in \mathcal{A}$, which contradicts the first part of Lemma \ref{RSS prop 4.2}.

Thus, $b \in \mathcal{A} \cap N_{> 0}$.
By the second part of Lemma \ref{RSS prop 4.2}, $\beta = \varphi(b)$ is an s-level for $\varphi$.
It is therefore also an s-level for $\varphi_k$.
\end{proof}

\section{Openness with respect to finitely many valuations}
Throughout this section we consider a finite-dimensional division algebra $D$ over a field $k$.
We assume that $D$ is {\it separable} over $k$, i.e., the center $F=Z(D)$ of $D$ is
a separable algebraic extension of $k$. 
We further assume that $k$ is equipped with a height one valuation $v$.
Since $D$ is finite-dimensional over $k$, the field extension $F/k$ is finite,
and there are at most $[F:k]$ extensions of $v$ to $F$  (see \cite{Bo}, Ch.\ 6, \S8, n{$^\circ$}3, Th.\ 1).
Each such extension is also of height one (\cite{Efr_book}, Cor.\ 14.2.3(c)).

Let $\vert \ \vert_v$ denote the absolute value on $k$ associated with $v$.
Given a basis $\omega_1, \ldots , \omega_m$ of $D$ over $k$, we define a norm on $D$ by
\begin{equation}
\label{eq norm}
\vv a_1\omega_1 + \cdots + a_m\omega_m \vv_v := \max_{i = 1, \ldots
, m} \vert a_i \vert_v.
\end{equation}
This norm turns $D$ into a normed vector space over $(k,\vert \ \vert_v)$.
Let $\tau_v$ denote the induced topology on $D$.
One easily verifies that $\tau_v$ and the notion of boundedness on $D$ associated with $\vv \ \vv_v$
do not depend on the choice of the basis.

Next let   $S$  be a non-empty finite set of height one valuations on $F=Z(D)$.
Let $\kappa_1, \ldots , \kappa_{n^2}$  be a fixed basis of $D$ over $F$.
For $w \in S$ we may define a norm $\vv \ \vv_w$ on $D$ with respect to this basis in a way similar to \eqref{eq norm}.
Then
\[
D_w = D \otimes_F F_w,
\]
is a finite-dimensional algebra over the completion $F_w$ of  $F$ with respect to $\vert \ \vert_w$.
The norm $\vv \ \vv_w$ extends to $D_w,$ it is defined exactly as in \eqref{eq norm}
using the basis $\omega_1\otimes 1,\ldots,\omega_m\otimes 1$ of $D_w$ over $F_w$.
We endow $D_w$ with the corresponding topology and the notion of boundedness.
Now set
\[
D_S = \prod_{w \in S} D_w,
\]
and endow it with the product topology $\tau_S$.
We have a diagonal embedding $\iota_S\colon D\to D_S$.
The topology on $D$ induced from $\tau_S$ via $\iota_S$ is then the
{\it $\tau_S$-topology on $D$}.
It restricts to the $S$-adic topology on $F$.
Let $\mathrm{pr}_w\colon D_S\to D_w$, for $w\in S$, and $\mathrm{pr}_T\colon D_S\to D_T$,
for $T\subseteq S$, be the projection maps (with the usual convention that $D_T$ is a singleton if $T$ is empty).

We will need the following generalization of \cite{R}, Lemma 2:

\begin{lemma}
\label{lem pr}
In the above setup, let $\mathcal{B}$ be a $\tau_S$-open subring of $D_S$.
Let $S_0$ be a subset of $S$ such that  $\mathrm{pr}_{w}(\mathcal{B})$ is unbounded for every $w\in S_0$,
and set $T:=S\sminus S_0$.
Then
\[
\mathcal{B} = \mathrm{pr}_T(\mathcal{B}) \times D_{S_0}.
\]
In particular, if $T = \emptyset$ then $\mathcal{B} = D_{S} = D_{S_0}$.
\end{lemma}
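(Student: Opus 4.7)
The plan is to establish the non-trivial inclusion $\mathrm{pr}_T(\mathcal{B})\times D_{S_0}\subseteq\mathcal{B}$ by reducing it to the claim that, for each $w_0\in S_0,$ the slice
\[
e_{w_0}(D_{w_0}):=\{0\}_{S\sminus\{w_0\}}\times D_{w_0}
\]
lies in $\mathcal{B},$ where $e_{w_0}\colon D_{w_0}\hookrightarrow D_S$ denotes the inclusion into the $w_0$-th factor with zeros elsewhere. Granted this, for any $(b_T,d)\in \mathrm{pr}_T(\mathcal{B})\times D_{S_0}$ with a lift $\tilde b\in\mathcal{B}$ of $b_T,$ the difference $(b_T,d)-\tilde b=(0_T,\,d-\mathrm{pr}_{S_0}(\tilde b))$ lies in $\sum_{w_0\in S_0}e_{w_0}(D_{w_0})\subseteq\mathcal{B},$ so $(b_T,d)\in\mathcal{B};$ the case $T=\emptyset$ is then immediate.

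Fix $w_0\in S_0$ and a basic open neighborhood $V=\prod_{w\in S}V_w\subseteq\mathcal{B}$ of $0.$ Since $e_{w_0}$ is a (non-unital) ring homomorphism, $\mathcal{B}_{w_0}:=e_{w_0}^{-1}(\mathcal{B})\subseteq D_{w_0}$ is a subring of $D_{w_0},$ and from $e_{w_0}(V_{w_0})\subseteq V\subseteq\mathcal{B}$ I get $V_{w_0}\subseteq\mathcal{B}_{w_0},$ hence $\mathcal{B}_{w_0}$ is open. A direct coordinate computation yields
\[
b\cdot e_{w_0}(y_{w_0})=e_{w_0}(\mathrm{pr}_{w_0}(b)\cdot y_{w_0})\in\mathcal{B}
\]
for all $b\in\mathcal{B}$ and $y_{w_0}\in V_{w_0},$ so $\mathrm{pr}_{w_0}(\mathcal{B})\cdot V_{w_0}\subseteq\mathcal{B}_{w_0}.$ Picking a nonzero $c\in F_{w_0}$ with $|c|_{w_0}$ small enough that $c\cdot 1\in V_{w_0},$ I obtain $c\cdot\mathrm{pr}_{w_0}(\mathcal{B})\subseteq\mathcal{B}_{w_0},$ and the hypothesis that $\mathrm{pr}_{w_0}(\mathcal{B})$ is unbounded in $D_{w_0}$ (together with $c\neq 0$) forces $\mathcal{B}_{w_0}$ to be unbounded as well.

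The remaining crux, and the main obstacle of the proof, is the following ring-theoretic lemma: \emph{every open unbounded subring $R$ of the finite-dimensional $F_{w_0}$-algebra $D_{w_0}$ equals $D_{w_0}$.} Applied with $R=\mathcal{B}_{w_0},$ it yields the slice claim. I would first treat the simpler $F_{w_0}$-case: any open unbounded subring $R'$ of $F_{w_0}$ equals $F_{w_0}$, because openness gives $R'\supseteq\pi^M\mathcal{O}_{F_{w_0}}$ for some $M$ and unboundedness gives $s\in R'$ with $|s|_{w_0}$ arbitrarily large, whence the subring property gives $R'\supseteq\bigcup_{k\ge 0}s^k\pi^M\mathcal{O}_{F_{w_0}}=F_{w_0}.$ For general $R\subseteq D_{w_0}$, applying this to $R\cap F_{w_0}$ (automatically an open subring of $F_{w_0}$) reduces the lemma to showing $R\cap F_{w_0}$ is unbounded in $F_{w_0}$; then $R\supseteq F_{w_0}$, and the subring property combined with $R\supseteq V_{w_0}$ containing an $F_{w_0}$-basis of $D_{w_0}$ yields $R=F_{w_0}\cdot V_{w_0}=D_{w_0}.$ The hardest step is producing unbounded scalars in $R\cap F_{w_0},$ for which I would take an invertible $a\in R$ of arbitrarily large norm (by perturbing an unbounded element of $R$ within an open $R$-neighborhood, using the density of units in $D_{w_0}$), observe that $a$ cannot be integral over $\mathcal{O}_{F_{w_0}}$ (since integral elements form a bounded sub-$\mathcal{O}_{F_{w_0}}$-module), and invoke the Cayley--Hamilton relation $\chi_a(a)=0$ together with the subring property to pull some coefficient of the reduced characteristic polynomial of $a$, of arbitrarily large $|\cdot|_{w_0},$ into $R\cap F_{w_0}$.
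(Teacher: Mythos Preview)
Your reduction to the slice claim $e_{w_0}(D_{w_0})\subseteq\mathcal{B}$ for each $w_0\in S_0$ is correct and gives a clean non-inductive route; the paper, by contrast, simply cites \cite[Lemma~2]{R} for the case $|S_0|=1$ and then inducts on $|S_0|$. Your steps up to the key lemma are fine, and that lemma (an open unbounded subring of the finite-dimensional $F_{w_0}$-algebra $D_{w_0}$ equals $D_{w_0}$) is exactly the content of \cite[Lemma~2]{R}, so you have correctly located where the real work lies.

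The gap is in your sketch of that key lemma. Having chosen an invertible $a\in R$ of large norm, you correctly note that not every coefficient of $\chi_a$ can lie in $\mathcal{O}_{F_{w_0}}$. But Cayley--Hamilton does \emph{not} let you ``pull'' any such coefficient into $R\cap F_{w_0}$: the identity $a^n+c_{n-1}a^{n-1}+\cdots+c_0\cdot 1=0$ is a single $F_{w_0}$-linear relation among $1,a,\ldots,a^n$, and isolating any one $c_j\cdot 1$ from it requires the \emph{other} $c_i$ to already lie in $R$. The $c_i$ are linear-algebra invariants of left multiplication by $a$ (elementary symmetric functions of its eigenvalues), not elements of the subring generated by $a$ together with a small ball. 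For instance, with $D_{w_0}=M_2(F_{w_0})$ and $a=\mathrm{diag}(\lambda,1)$, $|\lambda|$ large, one has $\chi_a(t)=t^2-(\lambda+1)t+\lambda$, yet neither $(\lambda+1)\cdot I$ nor $\lambda\cdot I$ is a $\mathbb{Z}$-polynomial in $a$; the relation $a^2-(\lambda+1)a+\lambda\cdot I=0$ cannot be solved for $\lambda\cdot I$ inside $R$ without first knowing $(\lambda+1)\in R$. So this last step does not go through as written, and a different mechanism is needed; consult \cite[Lemma~2]{R} for a complete argument.
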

\begin{proof}
When $|S_0|=1$ this is proved (in an equivalent form) in \cite[Lemma 2]{R}.
The general case follows by induction.
\end{proof}
\medskip

We will also need the following fact from \cite{RSS}:

\begin{prop}[\cite{RSS}, Theorem 2.4]
\label{thm RSS}
Let $D$ be a finite-dimensional central division algebra over a field $F$, and let $w$ be a height one
valuation of $F$.
Assume that there exists a subring $\mathcal{B} \varsubsetneqq D$ such that
\begin{itemize}
\item[(i)]
$\mathcal{B}$ is open in $D$ with respect to the topology defined by the norm $\vv \ \vv_w$;
\item[(ii)]
there exists a positive integer $\ell$ such that $d \mathcal{B}d^{-1} \subseteq \mathcal{B}$
for all $d \in (D^{\times})^{\ell}:=\{x^{\ell}\mid x\in D^{\times}\}$.
\end{itemize}

\noindent
Then $w$ extends uniquely to a height one valuation $\widetilde{w}$ of
$D$ such that $\mathcal{B} \subseteq \mathcal{O}_{D , \widetilde{w}}$.
\end{prop}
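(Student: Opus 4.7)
The plan is to pass to the $w$-adic completion of $F$ and invoke classical extension theory for finite-dimensional division algebras over complete valued fields. Let $F_w$ denote the completion of $F$ at $w$, and set $D_w := D\otimes_F F_w$, a finite-dimensional central simple $F_w$-algebra. The topology induced by $\vv\ \vv_w$ on $D$ is the restriction, via the dense inclusion $D\hookrightarrow D_w$, of the natural topology on $D_w$. The closure $\widehat{\calb}$ of $\calb$ in $D_w$ is a subring which is clopen (being a topological subgroup of $(D_w, +)$ containing an open neighborhood of $0$). By continuity of conjugation, condition (ii) transfers: $d^\ell\widehat{\calb}d^{-\ell}\subseteq\widehat{\calb}$ for every $d\in\Dt$, and by approximation this extends to $c\widehat{\calb}c^{-1}\subseteq\widehat{\calb}$ for every $c\in(D_w^\times)^\ell$.

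The crucial step is to show that $D_w$ is itself a division algebra. By Wedderburn's theorem, $D_w\cong M_n(\Delta)$ for some central division algebra $\Delta$ over $F_w$, and the goal is $n=1$. Assume for contradiction that $n\ge 2$. Then $D_w$ contains nontrivial idempotents and matrix-unit-like structure. One leverages the openness of $\widehat{\calb}$ (which contains a neighborhood of $0$, and thus, after multiplication by elements of $(D_w^\times)^\ell$, controls sizeable portions of $D_w$) together with its normalization by $(D_w^\times)^\ell$ to argue that all matrix units must lie in $\widehat{\calb}$, forcing $\widehat{\calb}=D_w$. But then $\calb$, being clopen in $D$ and dense in $D_w$, would equal $D$, contradicting $\calb\varsubsetneqq D$. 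Hence $n=1$, and $D_w$ is a finite-dimensional central division algebra over the complete field $F_w$.

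Once $D_w$ is a division algebra over the complete valued field $F_w$, classical extension theory (cf.~\cite{Wad,Schil}) provides a unique extension of $w$ to a height-one valuation $\widetilde{w}_0$ of $D_w$, whose valuation ring is the integral closure of $\calo_{F_w}$ in $D_w$. Restricting $\widetilde{w}_0$ along $D\hookrightarrow D_w$ yields the desired height-one valuation $\widetilde{w}$ of $D$ extending $w$, with uniqueness of $\widetilde{w}$ reducing to that of $\widetilde{w}_0$. For the inclusion $\calb\subseteq\calo_{D,\widetilde{w}}$, one uses the inner-invariance of $\widetilde{w}_0$ together with the $(D_w^\times)^\ell$-normalization of $\widehat{\calb}$: a filtration argument using the two-sided ideals $\mathfrak{m}_{D_w,\widetilde{w}_0}^k$ establishes $\widehat{\calb}\subseteq\calo_{D_w,\widetilde{w}_0}$, and restriction to $D$ gives the claim. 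The main obstacle in this plan is ruling out $n\ge 2$: one must carefully exploit the openness of $\widehat{\calb}$ together with its normalization by $(D_w^\times)^\ell$ to force a contradiction inside the non-division matrix algebra $M_n(\Delta)$, where the proliferation of idempotents and the rich inner-automorphism orbits should preclude the existence of any proper open normalized subring.
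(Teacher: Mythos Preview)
The paper does not prove this proposition; it is quoted verbatim from \cite[Theorem~2.4]{RSS} and used as a black box in the proof of Theorem~\ref{thm val2}. So there is no in-paper argument to compare against.

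That said, your outline is the right one and matches the strategy in \cite{RSS}: pass to the completion, show $D_w$ is a division ring, invoke the classical unique extension over complete fields, and then check $\calb\subseteq\calo_{D,\widetilde w}$. Two remarks on the places you flag as incomplete.

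For ruling out $n\ge 2$: your idea is correct and can be made precise. Since $\widehat{\calb}$ is open in $D_w\cong M_n(\Delta)$ it contains a two-sided ideal of the form $\{(a_{ij}):a_{ij}\in\pi^k\calo_\Delta\}$ for some $k$. Conjugating the element $\pi^k e_{12}$ by the $\ell$-th powers $\mathrm{diag}(\pi^{-m},1,\ldots,1)^\ell$ produces $\pi^{k-m\ell}e_{12}\in\widehat{\calb}$ for all $m\ge0$; multiplying by $\pi^k e_{21}\in\widehat{\calb}$ gives $\pi^{2k-m\ell}e_{11}\in\widehat{\calb}$ for all $m$. Iterating such products among matrix units yields invertible elements of $\widehat{\calb}$ of arbitrarily negative $\widetilde w_0$-value, and then openness forces $\widehat{\calb}=D_w$, so $\calb=\iota^{-1}(\widehat{\calb})=D$, a contradiction.

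For the inclusion $\calb\subseteq\calo_{D,\widetilde w}$: no filtration or normalization is needed. Once $D_w$ is a division ring with valuation $\widetilde w_0$, suppose $x\in\widehat{\calb}$ with $\widetilde w_0(x)<0$. Then $\widetilde w_0(x^{-m})\to+\infty$, so for any $y\in D_w$ we have $yx^{-m}\to0$; by openness $yx^{-m}\in\widehat{\calb}$ for large $m$, whence $y=(yx^{-m})x^m\in\widehat{\calb}$. Thus $\widehat{\calb}=D_w$, again contradicting $\calb\subsetneqq D$. Hence $\widehat{\calb}\subseteq\calo_{D_w,\widetilde w_0}$ and restriction gives $\calb\subseteq\calo_{D,\widetilde w}$.
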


\begin{thm}
\label{thm val2}
Let $D$ be a finite-dimensional separable division algebra over a field $k$ of finite transcendence degree over its prime field, and let $F=Z(D)$ be its center.
Let $N \subseteq D^{\times}$ be a normal subgroup containing $-1$ and of finite index.
Assume that $\varphi\colon N \to \Gamma$ a strongly leveled map such that
%
%
\begin{itemize}
\item[(i)]
$\gvp(N\cap k^{\times})$ is totally ordered.

\item[(ii)]
$N_{\geq0}$ contains a basis of $D$ over $k$.

\item[(iii)] for the subring $\mathcal{R}$ of $D$ generated by $N_{\ge 0}$, there exists
$\gamma \in \Gamma_{> 0}$ such that  $\mathcal{R} \cap N \subseteq N_{>-\gamma}$.
%
\end{itemize}
Then
\begin{enumerate}
\item
the restriction $\varphi_k =\varphi \restr_{N \cap k^{\times}}$ is a strong valuation-like map;

\item
there exists a height one valuation $v$ of $k$ associated with
$\varphi_k$ such that $N\cap k^{\times}$ is open in $k^\times$ in the $v$-adic topology;


\item
there exists a finite non-empty set $T$ of valuations on $F$ extending $v$  such that
$\vert T\vert\le [F\colon k],$ and such that
\begin{enumerate}
\item
each $w \in T$ uniquely extends to a valuation $\widetilde w$ of $D$ associated with $\varphi;$

\item
$N$ is open in $D^\times$ in the $\widetilde T$-adic topology,
where $\widetilde T = \{\widetilde{w} \: \vert \: w \in T \}$.
\end{enumerate}
\end{enumerate}
\end{thm}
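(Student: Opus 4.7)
The plan is as follows. Parts~(1) and~(2) follow immediately from Theorem~\ref{thm val} applied to $\gvp$, since hypothesis~(i) is precisely the totally ordered subgroup hypothesis required there. The substance thus lies in part~(3).

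For~(3), let $S$ denote the set of all valuations of $F$ extending $v$; since $F/k$ is finite and separable, $1\le|S|\le[F\colon k]$ and each $w\in S$ has height one. Let $\calr\subseteq D$ be the subring generated by $N_{\ge0}$, and for $w\in S$ let $\calr_{(w)}$ be the closure of $\iota_w(\calr)$ in $D_w=D\otimes_F F_w$. Two structural properties of the $\calr_{(w)}$ should be checked first. \textbf{(A)} Each $\calr_{(w)}$ is open in $D_w$: Theorem~\ref{thm val comm}(3) applied to $\gvp_k$ shows the subring $\calr_0\subseteq k$ generated by $(N\cap\kt)_{\ge0}$ is $v$-adically open in $k$; combined with hypothesis~(ii), which provides a $k$-basis $\gro_1,\dots,\gro_m$ of $D$ contained in $N_{\ge0}\subseteq\calr$, the inclusion $\sum_i\calr_0\gro_i\subseteq\calr$ forces $\calr_{(w)}$ to contain a full open $\calo_{k_v}$-lattice in $D_w$ (note $m=[D\colon k]\ge\dim_{k_v}D_w$). \textbf{(B)} Each $\calr_{(w)}$ is normalized by $(\Dt)^\ell$ with $\ell=[\Dt\colon N]$: the positive cone of any partially ordered group is conjugation-invariant, so $nN_{\ge0}n^{-1}=N_{\ge0}$ for $n\in N$, whence $n\calr n^{-1}=\calr$ and, by continuity of conjugation, $n\calr_{(w)}n^{-1}=\calr_{(w)}$.

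Next, I would apply Lemma~\ref{lem pr} to the $\tau_S$-closure $\overline\calr$ of $\iota_S(\calr)$ in $D_S$ (open by (A)): setting $S_0=\{w\in S\colon\mathrm{pr}_w(\overline\calr)\text{ is unbounded}\}$ and $T=S\sminus S_0$ yields $\overline\calr=\mathrm{pr}_T(\overline\calr)\times D_{S_0}$. The main obstacle is Claim~\textbf{(C)}: $T\neq\emptyset$. If $T=\emptyset$ then $\calr$ is $\tau_S$-dense in $D$; pick $n\in N_{<-\gamma}$ (non-empty by strong-leveledness and non-triviality of $\gvp$) and a sequence $a_k\in\calr$ with $a_k\to n$ in $\tau_S$. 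The reduced-norm map $\nu=N_{F/k}\circ\Nrd\colon\Dt\to\kt$ is $v$-adically continuous, so $\nu(a_k)\nu(n)^{-1}\to 1$ in $k_v$; combining the $v$-adic openness of $N\cap\kt$ (part~(2)), the total ordering of $\gvp(N\cap\kt)$, and suitable powers of $\nu(a_k)\nu(n)^{-1}$ should force the existence of an element of $\calr\cap N$ with $\gvp$-value $\le-\gamma$, contradicting hypothesis~(iii).

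Granted~(C), for each $w\in T$ the subring $\calb_w:=\iota_w^{-1}(\calr_{(w)})\subsetneq D$ is proper, $\tau_w$-open, and normalized by $(\Dt)^\ell$, so Proposition~\ref{thm RSS} produces a unique height-one extension $\widetilde w$ of $w$ to $D$ with $\calb_w\subseteq\calo_{D,\widetilde w}$; in particular $N_{\ge0}\subseteq\calo_{D,\widetilde w}$, whence $\widetilde w$ is associated with $\gvp$ (Definition~\ref{def assoc val}), proving~(3)(a). For~(3)(b), view $\gvp$ as a leveled map by Lemma~\ref{lem s-level and level}(2) and apply Proposition~\ref{prop open} with valuations $\widetilde T=\{\widetilde w\colon w\in T\}$ and a $\widetilde T$-adically open subring $\calm$ constructed from the $\calb_w$'s such that $\calm\cap N\subseteq N_{>-\gb}$ for some $\gb\in\gC_{>0}$; hypothesis~(iii) is precisely what enables the construction of such a $\calm$. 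The technical heart of the proof is Claim~(C), which requires a delicate interplay of all three hypotheses (i)--(iii) via the reduced norm.
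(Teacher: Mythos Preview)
Your overall architecture matches the paper's: reduce (1)--(2) to Theorem~\ref{thm val}, set up the product $D_S$, show the closure $\overline{\calr}$ is open via hypothesis~(ii) and Theorem~\ref{thm val comm}(3), apply Lemma~\ref{lem pr} to split off $T$, invoke Proposition~\ref{thm RSS} to get the $\widetilde w$, and finish with Proposition~\ref{prop open}. The differences are in Claim~(C) and in the endgame.

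Your argument for (C) has a real gap. You assert $N_{<-\gamma}\ne\emptyset$ ``by strong-leveledness and non-triviality of $\gvp$'', but strong-leveledness only gives $N_{>\ga}\ne\emptyset$ for the s-level $\ga$; since $\gC$ is merely \emph{partially} ordered and nothing relates $\ga$ to $\gamma$, you cannot conclude $N_{<-\gamma}\ne\emptyset$. The subsequent reduced-norm manoeuvre is also only a sketch (``should force''): it is not clear why $\nu(a_k)$ lands in $\calr$ or why the limit produces an element of $\calr\cap N$ with $\gvp$-value $\le -\gamma$. The paper's route is far simpler and avoids (iii) entirely at this step. Since $\calr$ is $\tau_v$-open and hence $\tau_v$-closed (open additive subgroups are closed), and since $\tau_v$ coincides with the topology induced from $\tau_S$ via the isomorphism $D\otimes_k k_v\cong D_S$, one has $\calr=\iota_S^{-1}(\overline{\calr})$. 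If $T=\emptyset$ then $\overline{\calr}=D_S$, whence $\calr=D$. But the subring $\cala$ generated by $N_{>\ga}$ is a nonzero two-sided ideal of $\calr$, so $\cala=D$, contradicting $-1\notin\cala$ (Lemma~\ref{RSS prop 4.2}). This is the content of the reference to \cite[Prop.~4.3(1)]{RSS}.

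For (3)(b), rather than building an auxiliary $\calm$ from the $\calb_w$, the paper shows that $\calr$ itself is $\widetilde T$-adically open and then applies Proposition~\ref{prop open} with $\calm=\calr$, so hypothesis~(iii) is used verbatim. To make this work you need one additional point you omit: since each $w\in T$ extends to $D$, the algebra $D_w=D\otimes_F F_w$ is a division algebra and is the $\widetilde w$-completion of $D$; equivalence of norms on the finite-dimensional $F_w$-space $D_w$ then shows that the $\tau_T$-topology on $D$ \emph{coincides} with the $\widetilde T$-adic topology. Without this identification, the openness of $\calr$ (or of your $\calm$) in the $\widetilde T$-adic topology is not yet established.
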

\begin{proof}
Parts (1) and (2) hold by Theorem \ref{thm val}, using hypothesis (i).

Let $S$ be the set of all extensions of $v$ to $F$.
Then $S$ is non-empty, and as already noted, $|S|\leq[F:k]<\infty$.
Let $w_1,\ldots,w_r$ be the distinct valuations in $S$.
Let $k_v$ be the completion of $k$ with respect to $v$.
Since $F/k$ is separable, we have
\[
F \otimes_k k_v \cong \prod_{i = 1}^r F_{w_i}
\]
(see \cite{Bo}, Ch.\ 6, \S8, n{$^\circ$}2, Cor.\ 2), and therefore
\[
D \otimes_k k_v = D \otimes_F (F \otimes_k k_v)
\cong D \otimes_F \left(\prod_{i = 1}^r F_{w_i} \right) = \prod_{i = 1}^r D_{w_i} = D_S.
\]
This isomorphism commutes with the natural map
$D\to D \otimes_k k_v$, $d\mapsto d\otimes1$, and the diagonal map $\iota_S\colon D\to D_S$.
We extend the norm $\vv \ \vv_v$ on $D$ to $D \otimes_k k_v$.
Since any two norms on a finite-dimensional vector space over a complete normed field are equivalent
(see e.g., \cite{LangAlg}, p.\ 470, Prop.\ 2.2), the $\vv \ \vv_v$-topology on $D \otimes_k k_v$ coincides under this
isomorphism with the product topology $\tau_S$ on $D_S$.
Thus $\tau_S$ restricts to the topology $\tau_v$ on $D$.

Take a basis $\nu_1, \ldots , \nu_m$ of $D$ over $k$ which is contained in $N_{\geq0}$.
Then $D=k\nu_1+\ldots+k\nu_m$, with the $\tau_v$-topology, is homeomorphic
to the direct product of $m$ copies of $k$, with the $v$-adic topology, via the coordinate map.
Let $\mathcal{R}$ (resp., $\mathcal{R}_0$) denote the subring of $D$ (resp., $k$) generated by $N_{\ge0}$
(resp., $(N\cap k^\times)_{\ge0}$).
By (1) and (2),
Theorem \ref{thm val comm}(3) applies and we conclude
that the subring $\mathcal{R}_0$ of $k$  is $v$-adically open.
We have
\[
\mathcal{R}_0 \nu_1 + \cdots + \mathcal{R}_0 \nu_m \subseteq \mathcal{R},
\]
so $\mathcal{R}$ is open in $D$ with respect to $\tau_v$.

Further, let  $\widebar{\mathcal{R}}$ be the $\tau_S$-closure of $\mathcal{R}$ in $D_S$.
Then $\iota_S^{-1}(\widebar{\mathcal{R}})$ is the $\tau_v$-closure of $\mathcal{R}$ in $D$.
Being a $\tau_v$-open ring, $\mathcal{R}$ is also $\tau_v$-closed in $D$,
and therefore  $\mathcal{R}=\iota_S^{-1}(\widebar{\mathcal{R}})$.
Also, since $\mathcal{R}$ is $\tau_v$-open in $D$, the closure $\widebar{\mathcal{R}}$ is $\tau_S$-open in $D_S$
(indeed, for an open ball $B_D(0,\epsilon)$ in $\mathcal{R}$ one has
$B_{D\tensor_k k_v}(0,\epsilon)\subseteq\overline{B_D(0,\epsilon)}\subseteq \widebar{\mathcal{R}}$).

Next let $S_0$ denote the set of all $w \in S$ such that $\mathrm{pr}_w(\widebar{\mathcal{R}})$ is unbounded,
and set $T:=S\sminus S_0$.
By Lemma \ref{lem pr},
$\widebar {\mathcal{R}}=\mathrm{pr}_T(\widebar{\mathcal{R}}) \times D_{S_0}$.
Consequently
\begin{equation}
\label{inverse image of diag map}
\mathcal{R}=\iota_S^{-1}(\widebar{\mathcal{R}})
=\iota_S^{-1}(\mathrm{pr}_T(\widebar{\mathcal{R}})\times D_{S_0})
=\iota_T^{-1}(\mathrm{pr}_T(\widebar{\mathcal{R}})).
\end{equation}
In case $T = \emptyset$, this would mean $\mathcal{R} = D$, which contradicts [RSS], Prop. 4.3(1). Thus, $T$ is not empty. Furthermore, since $\mathrm{pr}_T(\overline{\mathcal{R}})$ is open in $D_T$, we obtain from (10.2) that $\mathcal{R}$ is open in $D$ with respect to $\tau_T$, as required.

Next we show that each $w \in T$ extends uniquely to $D$.
Indeed, since $\widebar{\mathcal{R}}$ is $\tau_S$-open in $D_S$,  the subring
\[
\mathcal{R}(w) := \mathrm{pr}_w(\widebar{\mathcal{R}}) \cap D
\]
is $\tau_v$-open in $D$.
Since $\mathrm{pr}_w(\overline{\mathcal{R}})$ is bounded, while $D$ is obviously unbounded, $\mathcal{R}(w)\neq D$.
Furthermore, being generated by $N_{\geq 0}$, the subring $\mathcal{R}$ is invariant under
conjugation by any element of $N$, so the same is true for
$\widebar{\mathcal{R}}$ and consequently for $\mathcal{R}(w)$.
Since $(D^{\times})^{\ell}\subseteq N$, for $\ell=[D^{\times}:N]$,
we can apply Proposition \ref{thm RSS} with $\mathcal{B}=\mathcal{R}(w)$
 and deduce that $w$ extends uniquely
to a height one valuation $\widetilde w$ of $D$ such that $\mathcal{R}(w)\subseteq \mathcal{O}_{D,\widetilde w}$.
In particular, $N_{\geq0}\subseteq \mathcal{O}_{D,\widetilde w}$, so
$\widetilde w$ is associated with $\varphi$.

The fact that $w$ extends to $D$ implies that $D_w = D \otimes_F F_w$ is a division algebra (see \cite{Cohn} or
\cite{Wad}, Th.\ 2.3).
Further,  $D_w$ can be identified with the completion of $D$ with respect to $\widetilde{w}$.
Since $D_w$ is finite-dimensional over the complete field $F_w$, the norm $\vv \ \vv_w$ and
the norm corresponding to $\widetilde{w}$ are equivalent, and hence induce the same topology
on $D_w$.
It follows that the $\tau_T$-topology on $D$ coincides with its $\widetilde{T}$-adic topology.
Therefore,  $\mathcal{R}$ is also $\widetilde{T}$-adically open.

Next, by (iii), there exists $\gamma \in
\Gamma_{> 0}$ such that $\mathcal{R} \cap N \subseteq N_{>-\gamma}$.
Hence, Proposition \ref{prop open} applies to $\mathcal{M} = \mathcal{R}$
and gives the openness of $N$ in the $\widetilde{T}$-adic topology.
\end{proof}

%

Assumption (iii) of Theorem \ref{thm val2} is satisfied in the following important situation:

\begin{lemma}[\cite{RSS}, Theorem 5.8(3)]\label{RSS Th 5.8(3)}
Let $x,y\in D^\times\sminus N$ satisfy $d(x^*,y^*)\geq3$ and let $\gC=\gC_{y^*}$ and $\ge=\ge_{y^*}$.
For the subring $\mathcal{R}$ of $D$ generated by $N_{\ge_0}$, there exists
$\gamma \in \Gamma_{> 0}$ such that  $\mathcal{R} \cap N \subseteq N_{>-\gamma}$.
\end{lemma}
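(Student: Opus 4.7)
The plan is to follow the strategy of \cite[Theorem 5.8(3)]{RSS}. Since $d(x^*, y^*) \ge 3$, Theorem \ref{thm existence of s-level in diam 3} supplies an s-level $\alpha \in \Gamma_{\ge 0}$ for the strongly leveled map $\gvp := \gvp_{y^*}$. Let $\calA \subseteq D$ denote the subring generated by $N_{>\alpha}$. The inclusions $N_{\ge 0} \cdot N_{>\alpha} \subseteq N_{>\alpha}$ and $N_{>\alpha} \cdot N_{\ge 0} \subseteq N_{>\alpha}$ show that $\calA$ is a two-sided ideal of $\calR$, while Lemma \ref{RSS prop 4.2} provides the crucial input $-1 \notin \calA$. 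These are the only ingredients needed beyond the strongly leveled property itself.

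The heart of the argument is to establish
\[
\calR \cap N \cap N_{<-\alpha} = \emptyset.
\]
Indeed, if some $r \in \calR \cap N$ had $\gvp(r) < -\alpha$, then $\gvp(r^{-1}) > \alpha$, so that $r^{-1} \in N_{>\alpha} \subseteq \calA$. Combined with $r \in \calR$ and the fact that $\calA$ is an ideal of $\calR$, this yields $1 = r \cdot r^{-1} \in \calA$, and hence $-1 = 0 - 1 \in \calA$, contradicting Lemma \ref{RSS prop 4.2}.

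It remains to convert this emptiness into a uniform lower bound, i.e.\ to produce $\gamma \in \Gamma_{>0}$ with $\calR \cap N \subseteq N_{>-\gamma}$. When $\Gamma_{y^*}$ is totally ordered---which, after possibly interchanging $x$ and $y$, is automatic as soon as $d(x^*, y^*) \ge 4$, by Theorem \ref{thm existence of VL in diam 4}---this step is immediate: the condition ``$\gvp(r) \not< -\alpha$'' reads as ``$\gvp(r) \ge -\alpha$'', so any $\gamma > \alpha$ works, and such a $\gamma$ is furnished by Corollary \ref{cor N(a inverse) negative}(2).

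The main obstacle is therefore the genuinely partially ordered regime at diameter exactly $3$, where ``$\not<-\alpha$'' is strictly weaker than ``$>-\gamma$''. My plan for this case is to first enlarge $\alpha$ by invoking Lemma \ref{lem s-level and level}(1) (replacing it by any larger element in $\Gamma_{\ge 0}$ that is still an s-level), and then to take $\gamma = \gvp(n)$ for the specific element $n$ with $n^{-1} \in N(a^{-1}b^{-1})$ produced in the proof of Theorem \ref{thm existence of s-level in diam 3}. Writing a general $r \in \calR \cap N$ in the form $r = \sum_i \epsilon_i a_i$ with $\epsilon_i = \pm 1$ and $a_i \in N_{\ge 0}$, and exploiting the closure property from Proposition \ref{prop Px vs Py}(5), one can then show that $\gvp(r)$ is comparable with, and strictly greater than, $-\gamma$, uniformly in $r$---thereby completing the proof.
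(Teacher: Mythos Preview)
The paper does not supply its own proof of this lemma; it simply cites \cite[Theorem 5.8(3)]{RSS}. So there is no in-paper argument to compare against, and your proposal must be judged on its own.

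Your first three paragraphs are correct and clean. The argument that $\calR\cap N\cap N_{<-\alpha}=\emptyset$ via the ideal $\calA$ and Lemma~\ref{RSS prop 4.2} is exactly the mechanism used in the paper in the totally ordered setting (compare the derivation of equation~\eqref{eq RcapN<ga} in Theorem~\ref{thm val comm}). In that setting, as you say, ``$\gvp(r)\not<-\alpha$'' is the same as ``$\gvp(r)\ge-\alpha$'', and any $\gamma>\alpha$ furnished by Corollary~\ref{cor N(a inverse) negative}(2) finishes the job.

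The final paragraph, however, is a sketch rather than a proof, and the gap is real. In a genuinely partially ordered $\Gamma$ (the diameter-$3$ regime), the emptiness of $\calR\cap N\cap N_{<-\alpha}$ does \emph{not} by itself yield a uniform $\gamma$ with $\calR\cap N\subseteq N_{>-\gamma}$: an element $r$ can have $\gvp(r)$ incomparable to $-\alpha$ with one ``coordinate'' arbitrarily negative. Enlarging $\alpha$ via Lemma~\ref{lem s-level and level}(1) does not help, for the same reason. Your suggestion to invoke Proposition~\ref{prop Px vs Py}(5) to force comparability of $\gvp(r)$ with $-\gamma$ for a general $r=\sum_i\epsilon_i a_i\in\calR$ is not substantiated: that proposition controls sums inside $N(a)\cap N(b)$, not the $\gvp$-values of arbitrary $\calR$-elements, and you have given no mechanism linking the two. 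As written, the partially ordered case is not proved; you would need either to import the actual argument from \cite[Theorem 5.8]{RSS} or to supply a genuine comparability argument specific to $\gvp_{y^*}$.
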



We are now in a position to prove Theorems A and  C of the Introduction and Theorem \ref{thm v-3.5}.

%
%
%
\begin{proof}[Proof of Theorem C]
Take $x,y\in D^\times\setminus N$ with $d(x^*,y^*)\geq3$ and set $\ge=\ge_{y^*}$.
By Theorem \ref{thm existence of s-level in diam 3}, $\gvp=\gvp_{y^*}\colon N\to\gC$ is strongly leveled map.
Now Theorem \ref{thm val} gives (1) and (2).
Moreover, the valuation $v$ in (2) is associated with $\varphi\restr_{N\cap k^\times}$.
By Corollary \ref{cor N(a inverse) negative}(3), $N_{\geq0}$ contains a basis of $D$ over $k$.
By Lemma \ref{RSS Th 5.8(3)},  there exists $\gamma \in \Gamma_{> 0}$ such that  $\mathcal{R} \cap N \subset N_{>-\gamma}$,
where  $\mathcal{R}$ is the subring of $D$ generated by $N_{\ge_0}$.
Therefore (3) follows from Theorem \ref{thm val2}.
\end{proof}

\begin{proof}[Proof of Theorem \ref{thm v-3.5}]
Since ${\rm diam}(\gD)\ge 3,$ and by Theorem \ref{thm 3.5}, we can pick a nonidentity element $y^*\in\Dt/N$ such
that $\gvp=\gvp_{y^*}\colon N\to\gC$ is a strongly leveled map as in Theorem B(1), and
such that the subgroup $\gvp_{y^*}(N\cap k^{\times})$ of\, $\gC_{y^*}$ is totally ordered.
Hence Theorem \ref{thm v-3.5} follows from Theorem C.
\end{proof}

We conclude this section with the proof of Theorem A.

\noindent
\begin{proof}[Proof of Theorem A]
By Theorem \ref{thm existence of VL in diam 4}, we can pick a nonidentity element $y^*\in\Dt/N$ such
that $\gvp=\gvp_{y^*}\colon N\to\gC$ is  a strong valuation-like map.

Suppose first that $D=K$ is abelian.  Then, by Theorem \ref{thm val comm}(1), the assertion
of Theorem A holds.

Next assume that $D$ is a finite-dimensional division algebra over a field $K$ of finite transcendence degree over its prime field.
Of course $k$ is infinite since $D$ is.
We may assume that $k=F=Z(D)$.
Since $\gvp$ is  a strong valuation-like map, the subgroup \mbox{$\gvp(N\cap F^{\times})\subseteq \gC$}
is totally ordered.
Hence the assertion of Theorem A holds by Theorem C(3)
(with $k=F$, and note that $\vert T\vert=\vert\widetilde{T}\vert=1$, since $k=F$).
%
%
%
\end{proof}


\section{Examples}\label{sect examples}

\subsection*{11A. \ Constructions of V-graphs}

\begin{example}\label{eg non-commutative milnor}
One can extend the notion of the  relative Milnor $K$-rings from the case of commutative fields, as in \cite[Part IV]{Efr_book}, to our general non-commutative context as follows.
Let $D$ be a division ring and let $N$ be a normal subgroup of $D^\times$.
We stress however that, when $N=1$, this definition is not the more common one, as in e.g.
\cite{Rehmann}, \cite{RehmannStuhler}.
For $r\geq1$ we define the degree $r$ Milnor $K$-group $K^M_r(D)/N$ of $D$ relative to $N$ as the abelian group generated by all $r$-tuples $\langle a_1^*,\ldots,a_r^*\rangle$ in $(D^\times/N)^r$, subject to the following
defining relations:
\begin{enumerate}
\item[(a)]
\textsl{multi-linearity:}
\[
\langle a_1^*,\ldots, a^*_i(a'_i)^*,\ldots,a_r^*\rangle=\langle a_1^*,\ldots, a^*_i,\ldots,a_r^*\rangle+\langle a_1^*,\ldots,(a'_i)^*,\ldots,a_r^*\rangle
\]
\item[(b)]
\textsl{Steinberg relations:} $\langle a_1^*,\ldots,a_r^*\rangle=0$ whenever $1\in a_i^*+a_j^*$ for some distinct $i,j$.
\end{enumerate}
In particular,  $K^M_1(D)/N$ is the maximal abelian quotient of $D^\times/N$.
We also define $K^M_0(D)/N=\zz$.

We equip $K^M_*(D)/N=\bigoplus_{r=0}^\infty K^M_r(D)/N$ with a graded ring structure, where the product is induced by concatenation.
Denote the natural multi-linear map $(D^\times/N)^r\to K^M_r(D^\times)/N$ by $(a^*_1,\ldots,a^*_r)\mapsto \{a_1,\ldots, a_r\}_N$.
Note that it is well-defined.

Now let $a\in D^\times$.
We show that $\{a,-a\}_N=0$.
This is trivial for $a=1$.
For $a\neq1$ the identity
\[
-a=(1-a)(a^{-1}(a-1))^{-1}=(1-a)(1-a^{-1})^{-1}
\]
implies that
\[
\begin{split}
\{a,-a\}_N&=\{a,1-a\}_N+\{a,(1-a^{-1})^{-1}\}_N \\
&=\{a,1-a\}_N+\{a^{-1},1-a^{-1}\}_N=0,
\end{split}
\]
as claimed.
In the terminology of \cite[\S23.1]{Efr_book} this shows that $K^M_*(D)/N$ is a $\kappa$-structure with the distinguished element $(-1)^*$.
It is now standard to see that $K^M_*(D)/N$ is anti-commutative (see \cite[Lemma 23.1.2]{Efr_book}); indeed, for $a,b\in D^\times$
we have
\[
\begin{split}
\{a,b\}_N+\{b,a\}_N&=\{ab,ab\}_N-\{a,a\}_N-\{b,b\}_N \\
&=\{ab,-1\}_N-\{a,-1\}_N-\{b,-1\}_N=0.
\end{split}
\]

Now assume that $(D^\times:N)<\infty$ and $-1\in N$.
Extending the construction in \cite{Efr4}, we define the \textsl{Milnor $K$-graph of $D$ relative to $N$} as the graph whose vertices are the non-trivial cosets in $D^\times/N$, and where vertices $a^*$ and $b^*$ are connected by an edge if and only if $\{a,b\}_N=0$ in $K^M_2(D)/N$ (this relation is symmetric by the anti-commutativity).
By construction, this graph satisfies condition (V1$'$).
Exactly as in \cite[Lemma 2.1]{Efr4} one shows using the bilinearity of $\{\cdot,\cdot\}_N$ that it satisfies (V2) and (V3).
\end{example}

\begin{example}\label{eg min graph}
Let $H$ be a group and let $\gTH$ be a graph on $H\sminus\{1\}$
with distance function $d$.
For $h\in H\sminus\{1\}$ denote
\[
\gTH_h:=\{g\in H\sminus\{1\}\mid d(h,g)\le 1\}\cup\{1\}.
\]
We say that $\gTH$ is a {\it centralizer-graph} if $\gTH_h$ is
a subgroup of $H$ contained in the centralizer $C_H(h)$, for all $h\in H$.

Notice that if we are given  subgroups $C_h\le C_H(h)$, $h\in H\sminus\{1\}$,
we can construct the minimal centralizer-graph $\gTH$
on $H\sminus\{1\}$ such that $C_h\le\gTH_h$, for all $h\in H\sminus\{1\}$.
This is just the intersection of all centralizer-graphs $\gTH'$
such that $C_h\le\gTH'_h$, $h\in H\sminus\{1\}$.

For each non-identity $a^*\in \Dt/N$ (notation as in subsection \ref{sub vg} of the Introduction),
let $C_{a^*}:= \lan (a+n)^*, (a^{-1}+n)^*\mid n\in N\ran$.
Let $\gD$ be the minimal centralizer-graph such that
$C_{a^*}\le \gD_{a^*}$, for all $a^*\in \Dt/N$.
Then $\gD$ is a  V-graph, and it is the minimal centralizer-graph
which is a  V-graph.
\end{example}
\medskip

\subsection*{11B. \ The bound $\mathrm{diam}\geq 3$ is sharp in Theorem C(3)}\hfill
\medskip

\noindent
In Theorem B(2), the bound $3$ on the diameter is sharp.
An example showing this for a commuting graph of the quaternion algebra $(-1,-1/\qq)$ was given in \cite[\S5]{R}.
The following analogous example shows this for a Milnor $K$-graph over $\qq$.

\begin{example}
Fix an odd integer $m$ and a prime number $l$ such that $\gcd(l-1,m)>1$.
Thus $\ff_l^\times\neq(\ff_l^\times)^m$.
Similarly to \cite[\S5]{R} and \cite[p.\ 582]{RS} we consider the homomorphism
\[
h\colon \qq^\times\to\zz, \quad
\pm{\textstyle\prod_i}p_i^{d_i}\mapsto{\textstyle\sum_i}d_i,
\]
where the $p_i$ denote the distinct primes, $d_i\in\zz$, and $d_i=0$ almost always.
Let $H=h^{-1}(m\zz)$ and let  $U=(\qq^\times)^m(1+\frakm_{v_l})$, where $v_l$ is the $l$-adic valuation on $\qq$.
One has a split exact sequence
\[
1\to\ff_l^\times/(\ff_l^\times)^m\to \qq^\times/U\xrightarrow{v_l}\zz/m\to0,
\]
(see e.g.\ \cite[(3.2.7)]{Efr_book}).
Hence $(\qq^\times:U)<\infty$.
Setting $N=H\cap U$, we obtain that
\[
-1\in(\qq^\times)^m\leq N\leq \qq^\times,
\quad (\qq^\times:N)<\infty.
\]

We show that $N$ cannot be open with respect to any finite set of non-trivial valuations on $\qq$.
To this end let $q_1,\ldots, q_t$ be distinct prime numbers, let $r_1,\ldots, r_t$ be positive integers, and consider the $q_i$-adic valuations $v_{q_i}$, $i=1,\ldots, t$.
We need to show that $N$ does not contain the congruence subgroup
\[
W=1+q_1^{r_1}\mathcal{O}_{v_{q_1}}\cap\cdots\cap q_t^{r_t}\mathcal{O}_{v_{q_t}}.
\]
Indeed, Dirichlet's theorem on primes in arithmetic progressions yields a positive integer $k$ with $p=1+kq_1^{r_1}\cdots q_t^{r_t}$ prime.
Thus $h(p)=1$, so $p\notin H$.
Also note that $p\in W$.
Consequently, $W\not\subseteq H$, whence $W\not\subseteq N$, as desired.

Finally we compute the diameter of the graph of $K^M_*(\qq)/N$.
In view of Theorem C (with $D=F=k=\qq$) and what we have just seen, the diameter is at most $2$.
To show that it is exactly $2$ we need to verify that $K^M_2(\qq)/N\neq0$.
Now there is a canonical isomorphism of $\kappa$-structures
\[
K^M_*(\qq)/U\cong (K^M_*(\ff_l)/(\ff_l^\times)^m)[\zz/m]
\]
\cite[Th.\ 26.1.2]{Efr_book}.
It follows that  $K^M_2(\qq)/U\neq0$  (see \cite[Example 23.2.5]{Efr_book}).
Applying the canonical epimorphism $\Res\colon K^M_*(\qq)/N\to K^M_*(\qq)/U$ we get that $K^M_2(\qq)/N\neq0$ as well.
More concretely, take an $l$-adic unit $a$ in $\zz$ whose residue $\bar a\in\ff_l^\times$ is not an $m$th power.
Then $aU\tensor lU\notin\St_{\qq,2}(U)$, and therefore also $aN\tensor lN\notin\St_{\qq,2}(N)$, i.e., $\{a,l\}_N\neq0$ in $K^M_2(\qq)/N$.

We remark that, by contrast, the diameter of $K^M_*(\qq)/H$ is $1$.
Indeed, $\qq^\times/H$ is cyclic and $H$ is not contained in any ordering (since $-1\in H$).
Hence, by \cite[Th.\ 25.2.1]{Efr_book}, $K^M_2(\qq)/H=0$, and the assertion follows.
\end{example}
\medskip

\subsection*{11C. \ Examples for Theorem C and property ($3\half$)}\hfill
\medskip

\noindent
Examples \ref{semi-local example for commuting graph} and \ref{semi-local example for K-graphs}
below show that Theorem C covers a more general situation than
the previous works \cite{R}, \cite{RS}, \cite{RSS}, \cite{Efr4}.
These examples also illustrate property ($3\half$), and shows that Theorem \ref{thm 3.5}
covers a more general situation than \cite[Theorem 5.8]{RSS}.

\begin{example}
\label{semi-local example for commuting graph}
First we study the following local situation.
Let $D$ be a  division algebra over a field $F$ and $\widetilde w$ a discrete valuation on $D$
with uniformizer $\pi$.
Let $U=O_{D,\widetilde w}^\times$ be the group of $\widetilde w$-units in $D$,
let $U^{(1)} = U^{(1)}_{\widetilde w} = 1 + \mathfrak{m}_{D , \widetilde w}$ be its first congruence subgroup,
and let $\overline{D}=\overline D_{\widetilde w}$ be the residue field.
Suppose that $e$ is a positive integer with $\pi^e\in Z(D)$.
Then
\[
N = \langle \pi^e ,U^{(1)}\rangle = \langle \pi^e \rangle \times U^{(1)}
\]
is a normal subgroup of $D^\times$.
One has an isomorphism
\[
U/(U\cap N)=U/U^{(1)} \cong \overline{D}^\times.
\]
Therefore $\widetilde w$ induces an exact sequence
\begin{equation}
\label{exact sequence for tilde w}
1 \to  \overline{D}^\times \to D^\times/N \to \mathbb{Z}/e\mathbb{Z} \to 0.
\end{equation}
Assume further that the residue characteristic of $\widetilde w$ is $2$.
Then $-1\in U^{(1)}\subseteq N$.

Let $N(y)$ be as in \S\ref{sect the ordered gp}.

\begin{lemma}
\label{L:N(b)}
Let $y \in D^\times\sminus N$.
Then
$N(y) = \{ n\in N \ \vert \ \widetilde w(n) < \widetilde w(y) \}$.
\end{lemma}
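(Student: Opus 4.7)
The plan is to prove both inclusions of the asserted equality separately, the nontrivial direction being ``$\subseteq$''. For ``$\supseteq$'', suppose $n\in N$ with $\widetilde w(n)<\widetilde w(y)$. I would factor
\[
y+n=n(1+n^{-1}y),
\]
observe that $\widetilde w(n^{-1}y)>0$ so that $1+n^{-1}y\in U^{(1)}\subseteq N$, and conclude $y+n\in N$; that is, $n\in N(y)$.

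For ``$\subseteq$'', I will argue by contradiction: assume $n\in N(y)$ and $\widetilde w(n)\geq \widetilde w(y)$, and derive $y\in N$, contradicting $y\notin N$. Writing $z:=y+n\in N$, the decomposition $N=\langle\pi^e\rangle\times U^{(1)}$ forces $\widetilde w(n),\widetilde w(z)\in e\zz$, and dividing by the appropriate power of $\pi$ brings both $n$ and $z$ into $U^{(1)}$. The argument then splits into two cases, depending on whether $\widetilde w(n)>\widetilde w(y)$ or $\widetilde w(n)=\widetilde w(y)$.

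In the first case, $\widetilde w(y)=\widetilde w(z)=:ek$, and $\pi^{-ek}n$ lies in $\mathfrak m_{D,\widetilde w}$; reading the identity $\pi^{-ek}y=\pi^{-ek}z-\pi^{-ek}n$ modulo $\mathfrak m_{D,\widetilde w}$ yields $\pi^{-ek}y\equiv 1$, so $y\in\pi^{ek}U^{(1)}\subseteq N$. In the second case, $\widetilde w(n)=\widetilde w(y)=:ek$, and I would further split on $\widetilde w(z)$: if $\widetilde w(z)=ek$, then both $\pi^{-ek}n$ and $\pi^{-ek}z$ are $\equiv 1\pmod{\mathfrak m_{D,\widetilde w}}$, forcing $\pi^{-ek}y\equiv 0$, which contradicts that it is a unit; if $\widetilde w(z)>ek$, the same reduction gives $\pi^{-ek}y\equiv -1$, and here the hypothesis that the residue characteristic is $2$ collapses $-1$ to $1$ in $\overline D$, so once again $\pi^{-ek}y\in U^{(1)}$ and $y\in N$. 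The only substantive obstacle — and the sole place the residue characteristic $2$ assumption is invoked — is that final sub-case; everything else reduces to routine bookkeeping with the valuation.
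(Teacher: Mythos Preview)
Your proof is correct and follows essentially the same approach as the paper's: a case analysis on the comparison between $\widetilde w(n)$ and $\widetilde w(y)$, with the residue characteristic~$2$ hypothesis entering precisely at the point where one must rule out the residue of $\pi^{-ek}y$ being $-1$. The paper argues directly (showing $y+n\notin N$ in the two bad cases) rather than by contradiction, and handles your Case~B in one stroke by observing that $\pi^{-t}y\in U\setminus U^{(1)}$ forces $\pi^{-t}y+u\in U\setminus U^{(1)}$, but the content is the same.
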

\begin{proof}
Set $s=\widetilde w(y)$.
Consider an arbitrary $n\in N$ and write it in the form $n = \pi^tu$,
with $t=\widetilde w(n) \in e\mathbb{Z}$ and $u \in U^{(1)}$.

\vskip1mm

{\it Case 1.} $t<s$.
Then $\widetilde w(\pi^{-t}y)>0$, so $\pi^{-t}y+u\in U^{(1)}$, implying that
$y+n=\pi^t(\pi^{-t}y+u)\in N$ and $n\in N(y)$.

\vskip1mm

{\it Case 2.} $t=s$.
Then $-\pi^{-t}y\in U\sminus U^{(1)}$, so $\pi^{-t}y+u\in U\sminus U^{(1)}$,
and therefore $y+n=\pi^t(\pi^{-t}y+u)\not\in N$, i.e., $n\not\in N(y)$.

\vskip1mm

{\it Case 3.} $t >s$.
Then $\widetilde w(\pi^{t-s}u)>0$.
Since $\pi^{-s}y\in U\sminus U^{(1)}$, we have $\pi^{-s}y+\pi^{t-s}u\in U\sminus U^{(1)}$,
and consequently $y+n\not\in N$ and $n\not\in N(y)$.
\end{proof}

\medskip

Next we consider the following semi-local situation.
Let $\widetilde w_1,\ldots, \widetilde w_r$ be distinct discrete
valuations on the division algebra $D$ with residue characteristic $2$.
Suppose that $\pi\in D$ is a common uniformizer for $\widetilde w_1,\ldots,\widetilde w_r$, and that $\pi^e\in Z(D)$ with $e \ge1$.
Set $N_i=\langle\pi^e\rangle\times U^{(1)}_{\widetilde w_i}$ and $N=N_1\cap\cdots\cap N_r$.
By the weak approximation theorem,
$D^\times/N\cong \prod_{i=1}^r D^\times/N_i$ via the diagonal map.
In the terminology of \S\ref{sect the ordered gp}, we set $U=U_{\pi^*}$ and $\Gamma=N/U$.

\begin{lemma}
\label{Zr}
There is an isomorphism of partially ordered groups
\[
 \Gamma=N/U \  \smash{\mathop{\longrightarrow}\limits^{\sim}} \  \mathbb{Z}^r, \quad
nU \mapsto (\tfrac1e\widetilde w _1(n),\ldots, \tfrac1e\widetilde w_r(n)),
\]
where $\mathbb{Z}^r$ is endowed with the product partial order relation.
Further,
\[
U=U^{(1)}_{\widetilde w_1}\cap \cdots\cap U^{(1)}_{\widetilde w_r}.
\]
\end{lemma}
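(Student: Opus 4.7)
The plan is to reduce the claim to the local situation already analyzed in Lemma \ref{L:N(b)}, and then to patch the local descriptions together via weak approximation for the pairwise independent, height-one valuations $\widetilde w_1,\ldots,\widetilde w_r$.

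Since $N=\bigcap_i N_i$ and $N_i=\langle\pi^e\rangle\times U^{(1)}_{\widetilde w_i}$ forces $\widetilde w_i(N_i)\subseteq e\mathbb Z$, the map
\[
\phi\colon N\to\mathbb Z^r,\qquad n\longmapsto \bigl(\tfrac1e\widetilde w_1(n),\ldots,\tfrac1e\widetilde w_r(n)\bigr)
\]
is a well-defined group homomorphism. The first task is to express $N(\pi)$ in terms of the $\widetilde w_i$. By definition, $n\in N(\pi)$ if and only if $n\in N$ and $n+\pi\in N_i$ for every $i$; applying Lemma \ref{L:N(b)} inside each $N_i$ (noting $\pi\notin N_i$ since $e\geq 2$) yields $\widetilde w_i(n)<\widetilde w_i(\pi)=1$, i.e.\ $\widetilde w_i(n)\le 0$. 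Thus
\[
N(\pi)=\{n\in N\mid \widetilde w_i(n)\le 0\ \text{for all }i\},
\]
and Lemma \ref{lem basic properties of N(y)}(1) upgrades this to $N(m\pi)=mN(\pi)=\{n\in N\mid \widetilde w_i(n)\le \widetilde w_i(m)\ \forall i\}$ for every $m\in N$.

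The order correspondence is then immediate: $m\,\frakP_{\pi^*}\,n$ means $N(m\pi)\subseteq N(n\pi)$; specializing this containment to the element $m\in N(m\pi)$ forces $m\in N(n\pi)$ and hence $\widetilde w_i(m)\le \widetilde w_i(n)$ for every $i$, while the reverse implication is clear from the description of the $N(\cdot\,\pi)$ sets. In particular, $n\in U_{\pi^*}$ iff $\widetilde w_i(n)=0$ for every $i$; combined with the decomposition $N_i=\langle\pi^e\rangle\times U^{(1)}_{\widetilde w_i}$, this identifies $U_{\pi^*}=\bigcap_i U^{(1)}_{\widetilde w_i}$, which is the second assertion, and simultaneously gives injectivity of the induced map $\bar\phi\colon N/U\to\mathbb Z^r$.

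Surjectivity of $\phi$ is the remaining point and is where the main work lies. Given $(k_1,\ldots,k_r)\in\mathbb Z^r$, one invokes the weak approximation theorem for the independent valuations $\widetilde w_i$ (the same tool used in Lemma \ref{lemma on coarsenings}) to produce $d\in D^\times$ with $\widetilde w_i(d\pi^{-ek_i})$ arbitrarily large for every $i$ simultaneously. Then $d\pi^{-ek_i}\in U^{(1)}_{\widetilde w_i}$, so $d\in\pi^{ek_i}U^{(1)}_{\widetilde w_i}\subseteq N_i$ for all $i$, hence $d\in N$ with $\phi(d)=(k_1,\ldots,k_r)$. The delicate point is precisely this step --- steering one and the same approximator into every $N_i$ at once --- and it relies crucially on the pairwise independence of the $\widetilde w_i$, which is automatic here since they are distinct and of height one.
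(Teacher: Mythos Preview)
Your proof is correct and follows essentially the same route as the paper's: compute $N(\pi)$ by intersecting the local descriptions from Lemma~\ref{L:N(b)}, read off $U_{\pi^*}=\bigcap_i U^{(1)}_{\widetilde w_i}$ and the order correspondence from the resulting formula $N(m\pi)=\{n\in N\mid \widetilde w_i(n)\le\widetilde w_i(m)\ \forall i\}$, and then invoke weak approximation for surjectivity. The only cosmetic difference is that the paper packages surjectivity as the isomorphism $N/U\cong\prod_i N_i/U^{(1)}_{\widetilde w_i}$ (already noted before the lemma via $D^\times/N\cong\prod_i D^\times/N_i$), whereas you construct an explicit preimage; and your parenthetical ``$e\ge 2$'' is indeed the tacit assumption needed for $\pi\notin N$ so that $U_{\pi^*}$ is defined.
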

\begin{proof}
By Lemma \ref{L:N(b)} with $y=\pi$,
\begin{equation}
\label{N(y) in a semi-local example}
N(\pi) = N_1(\pi)\cap\cdots\cap N_r(\pi)
 = \{ t \in N \ \vert \ \widetilde w_1(t),\ldots,\widetilde w_r(t)\leq0  \}.
\end{equation}
Hence $1\in N(\pi)$, and every $n\in U^{(1)}_{\widetilde w_1}\cap \cdots\cap U^{(1)}_{\widetilde w_r}$ satisfies $nN(\pi)=N(\pi)$.
Conversely,  if $n\in N$ satisfies $nN(\pi)=N(\pi)$, then $n,n^{-1}\in N(\pi)$, so  $\widetilde w_i(n) = 0$ for every  $i$.
It follows that
$U = U^{(1)}_{\widetilde w_1}\cap\cdots\cap U^{(1)}_{\widetilde w_r}$.

By the weak approximation,
\begin{equation}
\label{isom by weak approx}
\Gamma = N/U \cong N_1/U^{(1)}_{\widetilde w_1} \times\cdots\times N_r/U^{(1)}_{\widetilde w_r}\cong\mathbb{Z}^r,
\end{equation}
where the right isomorphism is induced by $(\tfrac1e\widetilde w_1,\ldots,\tfrac1e\widetilde w_r)$.

Moreover, if $m,n\in N$ and $N(m\pi)\subseteq N(n\pi)$, then $mN(\pi)\subseteq nN(\pi)$,
and (\ref{N(y) in a semi-local example}) implies that
$\widetilde w_i(m)\leq\widetilde w_i(n)$ for $i=1,2,\ldots,r$.
Therefore (\ref{isom by weak approx}) is an isomorphism of partially ordered groups.
\end{proof}

As a concrete example, take $d\in \mathbb{Q}$ such that $\sqrt d\in \mathbb{Q}_2\sminus\mathbb{Q}$;
e.g.,  we may take $d=17$ (see \cite{Efr_book}, Prop.\ 18.2.1).
Let $k$ be $\mathbb{Q}$, or more generally, a subfield of $\mathbb{Q}_2$ which does not contain $\sqrt d$.
Let $F=k(\sqrt d)$, and let $\sigma$ be the nontrivial automorphism in $\mathrm{Gal}(F/k)$.
Let $v,w_1$ be the restrictions of the $2$-adic valuation of $\mathbb{Q}_2$ to $k,F$, respectively.
Also let $w_2=w_1\circ\sigma$.
Then $w_1,w_2$ are unramified over $v$ and have residue field $\mathbb{F}_2$.
Further, they are distinct, and are the only extensions to $F$.
Consider the quaternion algebra  $D=(-1,-1/F)$ over $F$ with its standard basis ${\bf1},{\bf i},{\bf j},{\bf k}$, and take $e=2$.
Then $w_1,w_2$ extend to valuations $\widetilde w_1,\widetilde w_2$, respectively, on $D$ with
$(\widetilde w_i(D^\times):w_i(F^\times))=2$ and $\overline{D}_{\widetilde w_i}=\mathbb{F}_4$,  $i=1,2$.
Specifically, let
\[
\pi={\bf i}+\mathbf{j}, \quad a=\frac12(-{\bf1}+{\bf i}+{\bf j}+{\bf k})=-\frac12({\bf1}-{\bf j})({\bf1}-{\bf i}).
\]
Note that $\pi^2=-2$ and $a^2+a+1=0$.
Then $\pi$ is a uniformizer for $\widetilde w_i$  and $a$ is a $\widetilde w_i$-unit with residue not in $\mathbb{F}_2$, $i=1,2$.
Let
\[
N_i=\langle\pi^2\rangle\times U^{(1)}_{\widetilde w_i}=\langle-2\rangle\times U^{(1)}_{\widetilde w_i}, \quad
N=N_1\cap N_2.
\]

By the exact sequence (\ref{exact sequence for tilde w}), the group $D^\times/N_i$ has order $6$ (where $i=1,2$).
We show that it is noncommutative.
Indeed,
\[
\begin{split}
\pi a\pi^{-1} a^{-1}&=\frac12({\bf1}-{\bf i}+{\bf j}+{\bf k})=\frac12({\bf1}+{\bf j})({\bf1}-{\bf i}) , \quad  \\
\pi a\pi^{-1} a^{-1}-1&=\frac12(-{\bf1}-{\bf i}+{\bf j}+{\bf k})=-\frac12({\bf1}+{\bf i})({\bf1}-{\bf j})
\end{split}
\]
are in $U_{\widetilde w_i}$.
Thus $\pi a\pi^{-1} a^{-1}\in U_{\widetilde w_i}\sminus U^{(1)}_{\widetilde w_i}$,
so $aN_i$ and $\pi N_i$ do not commute in $D^\times/N_i$
(this also follows from the general fact that, if
$\cald$ is a finite-dimensional central division algebra over a discretely valued field $\calk$ with a perfect residue field $\overline{\calk}$,
then the center $Z(\overline{\cald})$ is a cyclic Galois extension of $\overline{\calk}$, and a uniformizer $\Pi$ in $\cald$ induces a generator of $\hbox{Gal}(Z(\overline\cald)/\overline\calk)$, cf.~\cite[Prop.~2.5]{Wad}.
In our situation,  $\bar a\in Z(\overline\cald)\sminus\overline\calk=\mathbb{F}_4\sminus\mathbb{F}_2$, so this generator must act non-trivially on $\bar a$).

Consequently, $D^\times/N_1\cong D^\times/N_2\cong S_3$ and
\begin{equation}
\label{S3  x S3}
D^\times/N\cong S_3\times S_3.
\end{equation}
Observe that the diameter of the commuting graph of $S_3 \times S_3$ is $3$.
Indeed, any $(\sigma_1 , \sigma_2) , (\tau_1 , \tau_2) \in S_3 \times S_3$ are connected in the
commuting graph by the following path of length $3$:
\[
(\sigma_1,\sigma_2) \ , \ (1 , \sigma_2) \ , \ (\tau_1,1) \ , \ (\tau_1, \tau_2)
\]
provided that $\sigma_2 ,\tau_1 \neq 1$; other cases are considered similarly.
On the other hand, if $\sigma_1,\sigma_2 \in S_3$ are transpositions
and $\tau_1,\tau_2 \in S_3$ are 3-cycles, then $(\sigma_1,\sigma_2)$
and $(\tau_1,\tau_2)$ are not at distance $\le 2$.

Now $\pi N_i$ has order $2$ and $aN_i$ has order $3$  in $D^\times/N_i\cong S_3$.
Hence $\pi N,aN\in D^\times/N$ correspond under (\ref{S3 x S3}) to elements of the form
$(\sigma_1,\sigma_2)$, $(\tau_1 , \tau_2)$, respectively, where $\sigma_1,\sigma_2$
are transpositions and $\tau_1,\tau_2$ are 3-cycles.
It follows that $d(aN , \pi N)=3$.

Therefore the assumptions of Theorem C are satisfied
with $k$ a field of finite transcendence degree as above, $x=a$, $y=\pi$,
and with $\varphi=\varphi_{y^*}\colon N\to \Gamma=N/(U_{\widetilde w_1}\cap U_{\widetilde w_2})\cong\mathbb{Z}^2$
(see Lemma \ref{Zr}).
Then $\varphi(nU)=(\tfrac12 v(n),\tfrac12 v(n))$ for $n\in N\cap k^\times$,
so $\varphi(N\cap k^\times)\cong\mathbb{Z}$ is indeed totally ordered.
This also shows that $\varphi\restr_{N\cap k^\times}$ is a strong valuation-like map,
and $N\cap k^\times$ is open in the $v$-adic topology on $k$, in accordance with (1) and (2) of Theorem C.
By construction, (3) of Theorem C holds with $T=\{w_1,w_2\}$ and $\widetilde T=\{\widetilde w_1,\widetilde w_2\}$.

On the other hand, the results of \cite{RS}, \cite{RSS} do not apply to characterize $N$.
Namely, by Lemma \ref{lemma on coarsenings}, $N$ is not $\widetilde u$-adically open for any nontrivial valuation $\widetilde u$ on $D$.
Technically, the results of \cite{R}  may not apply either as $k$ and $F$ do not have to be number fields
(although some methods of \cite{R} were instrumental in proving Theorem C).
\end{example}

This construction also provides an example of a situation where property (3$\half$) of \S7 applies.
More precisely, the nontrivial automorphism $\sigma$ of ${\rm Gal}(F/k)$ extends
to an automorphism $\sigma_D$ of order $2$ of the algebra $D$.
This automorphism switches the valuations $w_1$ and $w_2$ of $F$,
the valuations $\widetilde w_1$ and $\widetilde w_2$ of $D$, and the subgroups $N_1$ and $N_2$ of $D^\times$.
The subgroup $N$ of $D^\times$ is invariant under $\sigma_D$, and $\sigma_D$ switches
the factors in $D^{\times}/N = S_3 \times S_3$.
Consider elements $x^* = (\sigma_1 , \sigma_2)$  and $y^* = (\tau_1 , \tau_2)$ as above.
We claim that $D^\times/N$ satisfies (3$\half$) with respect to $x^*,y^*$,
 the group $\Sigma=\{{\rm id},\sigma_D\}$, and $M=N\cap k^\times$.
Indeed, essentially the only path of length 3 between $x^*$ and $y^*$ is
\[
x^*, \  (\sigma_1 , 1), \  (1 , \tau_2), \  y^*.
\]
But  $d(\sigma_D(x^*) , y^*) = 3$, and $\sigma_D(\sigma_1,1) = (1, \sigma_1)$ and $(1,\tau_2)$ do not commute.

\medskip

\begin{example}
\label{semi-local example for K-graphs}
We give a similar construction for a Milnor $K$-graph.
Let $p$ be a number such that $p\equiv1\pmod4$.
Let $F/k$ be a nontrivial Galois field extension such that $F\subseteq\mathbb{Q}_p$.
Let $v,w$ be the restrictions of the $p$-adic valuation on $\mathbb{Q}_p$ to $k,F$, respectively.
The extension $(F,w)/(k,v)$ is immediate, i.e., has the same value group $\mathbb{Z}$ and residue field $\mathbb{F}_p$.
Since $v$ is discrete, this implies that there are exactly $[F:k]$ extensions of $v$ to $F$,
namely $w\circ\sigma$ with $\sigma\in\mathrm{Gal}(F/k)$ (see \cite{Efr_book}, Cor.\ 17.4.4).
Set $w_1=w$ and $w_2=w\circ\sigma$ with $\sigma\neq\mathrm{id}$.

Now set
\[
N_1=(F^\times)^2(1+\mathfrak{m}_{w_1}), \quad
N_2=(F^\times)^2(1+\mathfrak{m}_{w_2}), \quad
N=N_1\cap N_2.
\]
Since $p\equiv1\pmod4$ we have $-1\in(\mathbb{F}_p^\times)^2=(\overline F_{w_i}^\times)^2$, $i=1,2$, and therefore $-1\in N$.

We compute  the relative Milnor $K$-ring $K^M_*(F)/N$.
We use the terminology and results of \cite{Efr_book}, Part IV.
First, the graded ring $K^M_*(\mathbb{F}_p) /(\mathbb{F}_p^\times)^2$ is
$\mathbb{F}_p^\times/(\mathbb{F}_p^\times)^2\cong \mathbb{Z}/2\mathbb{Z}$  in degree $1$,
and is trivial in degrees $\geq2$ \cite{Efr_book}, Cor.\ 25.2.4.
Thus $K^M_*(\mathbb{F}_p) /(\mathbb{F}_p^\times)^2$ is the extension
${\bf0}[\mathbb{Z}/2\mathbb{Z}]$ of the trivial $\kappa$-structure $\bf0$ by the group $\mathbb{Z}/2\mathbb{Z}$
\cite{Efr_book}, Example 23.2.4.
Since $w(N_1)=2\mathbb{Z}$, \cite{Efr_book}, Th.\ 26.1.2 and Example 26.1.1(1), says that $K^M_*(F)/N_1$
is the extension $(K^M_*(\mathbb{F}_2)/(\mathbb{F}_p^\times)^2)[\mathbb{Z}/2\mathbb{Z}]$.
It follows that
\[
K^M_*(F)/N_1\cong({\bf0}[\mathbb{Z}/2\mathbb{Z}])[\mathbb{Z}/2\mathbb{Z}]
\cong{\bf0}[(\mathbb{Z}/2\mathbb{Z})^2]
\]
(see \cite{Efr_book}, Lemma 23.2.3).
Similarly, $K^M_*(F)/N_2\cong{\bf0}[(\mathbb{Z}/2\mathbb{Z})^2]$.

Since $w,w'$ are distinct and discrete, they are independent.
Therefore  \cite{Efr_book}, Cor.\ 28.2.4, shows that
\[
K^M_*(F)/N\cong(K^M_*(F)/N_1)\times(K^M_*(F)/N_2)\cong
({\bf0}[(\mathbb{Z}/2\mathbb{Z})^2])\times({\bf0}[(\mathbb{Z}/2\mathbb{Z})^2]).
\]
The Milnor $K$-graph of $F$ relative to $N$ was computed in \cite{Efr4}, \S7, and it has diameter $3$.

We may further take $k$ to be finitely generated over $\mathbb{Q}$.
By construction, $N$ is open in the $T$-adic topology, where $T=\{w_1,w_2\}$.
This is in accordance with Theorem 10.4, when we take $D=F$ and $T=\widetilde T$.

Finally, by Lemma \ref{lemma on coarsenings}, $N$ is not $u$-adically open with respect
to any single non-trivial valuation $u$ on $F$.

In this example as well property ($3\frac12$) holds, when we take $F/k$ to be an extension of degree $2$, $\Sigma=\mathrm{Gal}(F/k)$, and $M=N\cap k^\times$.
The generator $\sigma$ of $\mathrm{Gal}(F/k)$ switches $N_1,N_2$, and therefore fixes $N$.
For $i=1,2$, the group $F^\times/N_i$ is the degree 1 component of $K^M_*(F)/N_i\cong {\bf0}[(\mathbb{Z}/2\mathbb{Z})^2]$, and is isomorphic to $(\mathbb{Z}/2\mathbb{Z})^2$.
Let $\alpha,\beta,\gamma,\delta$ be generators of $F^\times/N$ such that $\alpha,\beta$ project to generators  $F^\times/N_1$ and $\gamma,\delta$ project to generators of  $F^\times/N_2$.
We may assume that $\sigma$ switches $\alpha,\gamma$ and $\beta,\delta$.
As was shown in \cite{Efr4}, \S7 (in a multiplicative notation), the vertices $\alpha+\gamma,\beta+\delta$
of the Milnor $K$-graph of $F$ relative to $N$ have distance $3$, and there are exactly two paths of length $3$ connecting them, namely
\[
\alpha+\gamma, \alpha,\delta,\beta+\delta \quad \mathrm{and} \quad
\alpha+\gamma, \gamma,\beta,\beta+\delta.
\]
Further, $\sigma(\alpha+\gamma)=\alpha+\gamma$, but the vertices $\gamma=\sigma(\alpha)$ and $\delta$
are not connected by an edge, and neither  are the vertices
$\alpha=\sigma(\gamma)$ and $\beta$.
Therefore property ($3\frac12$) holds for $x^*=\alpha+\gamma$ and $y^*=\beta+\delta$.
\end{example}

\subsection*{Acknowledgment.}
We warmly thank the referee for the thorough, quick, and extremely useful report, which helped us to correct
certain flaws, as well as to improve the exposition of the paper.


\end{document}